\numberwithin{equation}{section}
\newtheorem{Thm}{Theorem}[section]
\newtheorem{Rk}[Thm]{Remark}
\newtheorem{Def}[Thm]{Definition}
\newtheorem{Prop}[Thm]{Proposition}
\newtheorem{Lem}[Thm]{Lemma}
\newtheorem{Cor}[Thm]{Corollary}
\def\QA{U_{v}(\widehat{\mathfrak{sl}}(2|1))}
\def\QAP{U_{v}^{>}(\widehat{\mathfrak{sl}}(2|1))}
\def\QAG{U_{v}(\widehat{\mathfrak{sl}}(n|m))}
\def\dto{U_{v}^{>}(\widehat{\mathfrak{D}}(2,1;\theta))}
\newcommand\nc{\newcommand}
\nc{\unl}{\underline}
\nc{\ol}{\overline}
\nc{\on}{\operatorname}
\nc{\BA}{{\mathbb{A}}}
\nc{\BC}{{\mathbb{C}}}
\nc{\BD}{{\mathbb{D}}}
\nc{\BF}{{\mathbb{F}}}
\nc{\BG}{{\mathbb{G}}}
\nc{\BM}{{\mathbb{M}}}
\nc{\BN}{{\mathbb{N}}}
\nc{\BO}{{\mathbb{O}}}
\nc{\BQ}{{\mathbb{Q}}}
\nc{\BP}{{\mathbb{P}}}
\nc{\BR}{{\mathbb{R}}}
\nc{\BZ}{{\mathbb{Z}}}
\nc{\BS}{{\mathbb{S}}}
\nc{\BK}{{\mathbb{K}}}
\nc{\CA}{{\mathcal{A}}} \nc{\CB}{{\mathcal{B}}} \nc{\CalC}{{\mathcal
C}} \nc{\CalD}{{\mathcal D}} \nc{\CE}{{\mathcal{E}}}
\nc{\CF}{{\mathcal{F}}} \nc{\CG}{{\mathcal{G}}}
\nc{\CH}{{\mathcal{H}}} \nc{\CI}{{\mathcal{I}}}
\nc{\CK}{{\mathcal{K}}} \nc{\CL}{{\mathcal{L}}}
\nc{\CM}{{\mathcal{M}}} \nc{\CN}{{\mathcal{N}}}
\nc{\CO}{{\mathcal{O}}} \nc{\CP}{{\mathcal{P}}}
\nc{\CQ}{{\mathcal{Q}}} \nc{\CR}{{\mathcal{R}}}
\nc{\CS}{{\mathcal{S}}} \nc{\CT}{{\mathcal{T}}}
\nc{\CU}{{\mathcal{U}}} \nc{\CV}{{\mathcal{V}}}
\nc{\CW}{{\mathcal{W}}} \nc{\CX}{{\mathcal{X}}}
\nc{\CY}{{\mathcal{Y}}} \nc{\CZ}{{\mathcal{Z}}}
\nc{\fa}{{\mathfrak{a}}}
\nc{\fb}{{\mathfrak{b}}}
\nc{\fg}{{\mathfrak{g}}}
\nc{\fgl}{{\mathfrak{gl}}}
\nc{\fh}{{\mathfrak{h}}}
\nc{\fj}{{\mathfrak{j}}}
\nc{\fl}{{\mathfrak{l}}}
\nc{\fm}{{\mathfrak{m}}}
\nc{\fn}{{\mathfrak{n}}}
\nc{\fu}{{\mathfrak{u}}}
\nc{\fp}{{\mathfrak{p}}}
\nc{\frr}{{\mathfrak{r}}}
\nc{\fs}{{\mathfrak{s}}}
\nc{\ft}{{\mathfrak{t}}}
\nc{\fw}{{\mathfrak{w}}}
\nc{\fz}{{\mathfrak{z}}}
\nc{\fA}{{\mathfrak{A}}}
\nc{\fB}{{\mathfrak{B}}}
\nc{\fD}{{\mathfrak{D}}}
\nc{\fE}{{\mathfrak{E}}}
\nc{\fF}{{\mathfrak{F}}}
\nc{\fG}{{\mathfrak{G}}}
\nc{\fI}{{\mathfrak{I}}}
\nc{\fJ}{{\mathfrak{J}}}
\nc{\fK}{{\mathfrak{K}}}
\nc{\fL}{{\mathfrak{L}}}
\nc{\fM}{{\mathfrak{M}}}
\nc{\fN}{{\mathfrak{N}}}
\nc{\frP}{{\mathfrak{P}}}
\nc{\fQ}{{\mathfrak Q}}
\nc{\fR}{{\mathfrak R}}
\nc{\fS}{{\mathfrak S}}
\nc{\fT}{{\mathfrak{T}}}
\nc{\fU}{{\mathfrak{U}}}
\nc{\fW}{{\mathfrak{W}}}
\nc{\fY}{{\mathfrak{Y}}}
\nc{\fZ}{{\mathfrak{Z}}}
\nc{\ba}{{\mathbf{a}}}
\nc{\bb}{{\mathbf{b}}}
\nc{\bc}{{\mathbf{c}}}
\nc{\bd}{{\mathbf{d}}}
\nc{\be}{{\mathbf{e}}}
\nc{\bi}{{\mathbf{i}}}
\nc{\bj}{{\mathbf{j}}}
\nc{\bn}{{\mathbf{n}}}
\nc{\bp}{{\mathbf{p}}}
\nc{\bq}{{\mathbf{q}}}
\nc{\bu}{{\mathbf{u}}}
\nc{\bv}{{\mathbf{v}}}
\nc{\bw}{{\mathbf{w}}}
\nc{\bx}{{\mathbf{x}}}
\nc{\by}{{\mathbf{y}}}
\nc{\bz}{{\mathbf{z}}}
\nc{\bA}{{\mathbf{A}}}
\nc{\bB}{{\mathbf{B}}}
\nc{\bC}{{\mathbf{C}}}
\nc{\bD}{{\mathbf{D}}}
\nc{\bE}{{\mathbf{E}}}
\nc{\bI}{{\mathbf{I}}}
\nc{\bK}{{\mathbf{K}}}
\nc{\bH}{{\mathbf{H}}}
\nc{\bM}{{\mathbf{M}}}
\nc{\bN}{{\mathbf{N}}}
\nc{\bO}{{\mathbf{O}}}
\nc{\bQ}{{\mathbf Q}}
\nc{\bS}{{\mathbf{S}}}
\nc{\bT}{{\mathbf{T}}}
\nc{\bV}{{\mathbf{V}}}
\nc{\bW}{{\mathbf{W}}}
\nc{\bX}{{\mathbf{X}}}
\nc{\bP}{{\mathbf{P}}}
\nc{\bY}{{\mathbf{Y}}}
\nc{\bZ}{{\mathbf{Z}}}
\nc{\sA}{{\mathsf{A}}}
\nc{\sB}{{\mathsf{B}}}
\nc{\sC}{{\mathsf{C}}}
\nc{\sD}{{\mathsf{D}}}
\nc{\sF}{{\mathsf{F}}}
\nc{\sK}{{\mathsf{K}}}
\nc{\sM}{{\mathsf{M}}}
\nc{\sO}{{\mathsf{O}}}
\nc{\sQ}{{\mathsf{Q}}}
\nc{\sP}{{\mathsf{P}}}
\nc{\sT}{{\mathsf{T}}}
\nc{\sV}{{\mathsf{V}}}
\nc{\sW}{{\mathsf{W}}}
\nc{\sX}{{\mathsf{X}}}
\nc{\sZ}{{\mathsf{Z}}}
\nc{\sU}{{\mathsf{U}}}
\nc{\sS}{{\mathsf{S}}}
\nc{\sfb}{{\mathsf{b}}}
\nc{\sfc}{{\mathsf{c}}}
\nc{\sd}{{\mathsf{d}}}
\nc{\sg}{{\mathsf{g}}}
\nc{\sk}{{\mathsf{k}}}
\nc{\sfl}{{\mathsf{l}}}
\nc{\sfp}{{\mathsf{p}}}
\nc{\sr}{{\mathsf{r}}}
\nc{\st}{{\mathsf{t}}}
\nc{\sfu}{{\mathsf{u}}}
\nc{\sw}{{\mathsf{w}}}
\nc{\sz}{{\mathsf{z}}}
\nc{\sx}{{\mathsf{x}}}
\nc{\se}{{\mathsf{e}}}
\nc{\sfv}{{\mathsf{v}}}
\nc{\bLambda}{{\boldsymbol{\Lambda}}}
\nc{\vv}{{\boldsymbol{v}}}
\nc{\Fl}{{{\mathcal F}\ell}}
\nc{\Gr}{{\on{Gr}}}
\nc{\CHH}{{\CH\!\!\CH}}
\nc{\lambdavee}{{\lambda^{\!\scriptscriptstyle\vee}}}
\nc{\alphavee}{\alpha^{\!\scriptscriptstyle\vee}}
\nc{\rhovee}{{\rho^{\!\scriptscriptstyle\vee}}}
\nc{\oQM}{\vphantom{j^{X^2}}\smash{\overset{\circ}{\vphantom{\vstretch{0.7}{A}}\smash{\QM}}}}
\nc{\oZ}{{}^\dagger\!\vphantom{j^{X^2}}\smash{\overset{\circ}{\vphantom{\vstretch{0.7}{A}}\smash{Z}}}}
\nc{\odZ}{{}^\dagger\!\vphantom{j^{X^2}}\smash{\overset{\circ}{\vphantom{\vstretch{0.7}{A}}\smash{\mathfrak Z}}}^{c',c}}
\nc{\bdZ}{{}^\dagger\!\vphantom{j^{X^2}}\smash{\overset{\bullet}{\vphantom{\vstretch{0.7}{A}}\smash{\mathfrak Z}}}^{c',c}}
\nc{\oS}{\vphantom{j^{X^2}}\smash{\overset{\circ}{\vphantom{\vstretch{0.7}{A}}\smash{S}}}}
\nc{\buM}{\vphantom{j^{X^2}}\smash{\overset{\bullet}{\vphantom{\vstretch{0.7}{A}}\smash{M}}}}
\nc{\dW}{{}^\dagger\ol\CW{}}
\nc{\hW}{{}^\dagger\hat\CW{}}
\nc{\wW}{{}^\dagger\widetilde\CW{}}
\nc{\dZ}{{}^\dagger\!\fZ^{c',c}}
\nc{\dZc}{{}^\dagger\!\fZ^{c,c}}
\nc{\tZ}{{}^\dagger\!\tilde{Z}{}}
\nc{\hZ}{{}^\dagger\!\hat{Z}{}}
\nc{\ssl}{\mathfrak{sl}} \nc{\gl}{\mathfrak{gl}}
\nc{\wt}{\widetilde} \nc{\Sym}{\mathrm{Sym}} \nc{\Res}{\mathrm{Res}}
\nc{\sE}{{\mathsf{E}}} \nc{\bs}{{\mathbf{s}}}
\nc{\trig}{\mathrm{trig}} \nc{\rat}{\mathrm{rat}}
\nc{\sign}{\mathrm{sign}} \nc{\sL}{{\mathsf{L}}}
\nc{\fv}{{\mathfrak{v}}} \nc{\ad}{\mathrm{ad}}
\nc{\spsi}{{\mathsf{\psi}}} \nc{\sh}{{\mathsf{h}}}
\nc{\rtt}{\mathrm{rtt}} \nc{\qdet}{\mathrm{qdet}} \nc{\pt}{{\operatorname{pt}}}
\nc{\M}{\mathrm{M}} \nc{\Ker}{\mathrm{Ker}} \nc{\ssc}{\mathrm{sc}}
\nc{\loc}{\mathrm{loc}} \nc{\fra}{\mathrm{frac}}
\nc{\ddj}{\mathrm{DJ}} \nc{\End}{\mathrm{End}} \nc{\ev}{\mathrm{ev}}
\nc{\GL}{\mathrm{GL}}
\begin{document}

\title[]{Shuffle algebra realization of\\ quantum affine  superalgebra $U_{v}(\widehat{\mathfrak{D}}(2,1;\theta))$}

\author[]{Boris Feigin}
\address[]{National Research University Higher School of Economics, Russian Federation, International Laboratory of Representation Theory, Mathematical Physics, Russia, Moscow, 101000, Myasnitskaya ul., 20\\Landau Institute for Theoretical Physics, Russia, Chernogolovka, 142432, pr.Akademika
Semenova, 1a.
}
\email[]{borfeigin@gmail.com}
\author[]{Yue Hu}
\address[]{Center for Advanced Studies,
Skolkovo Institute of Science and Technology,
1 Nobel Street, Moscow, 143026, Russia}
\email[]{ldkhtys@gmail.com}

\date{}

\begin{abstract}
Inspired by \cite{Tsy18}, we give shuffle algebra realization of positive part of quantum affine superalgebra $U_{v}(\widehat{\mathfrak{D}}(2,1;\theta))$ associated to any simple root systems. We also determine the shuffle algebra associated to $\widehat{\mathfrak{sl}}(2|1)$ with odd root system when $v$ is a primitive root of unity of even order, generalizing results in \cite{FJMMT03}.
\end{abstract}

\maketitle

\section{Introduction}
Shuffle algebras are certain (skew)symmetric Laurent polynomials with prescribed poles satisfying the so called wheel conditions, and endowed with an associative algebra structure by shuffle product. These algebras are first studied by the first author and Odesskii in \cite{FO98}. They are interesting because they are expected to give realizations of quantum affine and quantum toroidal (super)algebras. The known examples are for type $A$ cases. In \cite{SV13}, Schiffmann and Vasserot constructed an isomorphism between the shuffle algebra associated to $\widetilde{A}_{1}$ and the positive part of the elliptic Hall algebra, or equivalently, the positive part of quantum toroidal $\ddot{U}_{v_{1},v_{2}}(\mathfrak{gl}(1))$ algebra, see also \cite{Neg14} for more details. In \cite{Neg13}, Negut generalized this result to higher rank cases, and proved that the shuffle algebra associated to $\widetilde{A}_{n}$ is isomorphic to the positive part of quantum toroidal $\ddot{U}_{v_{1},v_{2}}(\mathfrak{gl}(n))$ algebra for $n\geqslant 2$. For other types of finite Dynkin diagrams, a conjectural shuffle algebra realizations had been given, see \cite{Enr00},\cite{Enr03}.

It is interesting to even further consider the Dynkin diagrams associated to Kac-Moody superalgebras. In \cite{Tsy18}, Tsymbaliuk gave the shuffle algebra realization for quantum affine superalgebra $U_{v}(\widehat{\mathfrak{sl}}(m|n))$ with distinguished simple root system. His results suggest that in the super case, we should consider the skew-symmetric rational functions instead of symmetric ones corresponding to the odd simple roots. Note that the Kac-Moody superalgebras admit nonisomorphic simple root systems, and they give different positive parts. Recently in \cite{Tsy19}, Tsymbaliuk generalized results in \cite{Tsy18} to all simple root systems associated to $\mathfrak{sl}(m|n)$ and gave shuffle algebra realizations of the corresponding quantum affine superalgebras, making the picture for $A(m|n)$ case complete.

 In this paper, we consider the exceptional Lie superalgebra $\mathfrak{D}(2,1;\theta)$ and give shuffle algebra realization of positive part of quantum affine superalgebra $U_{v}(\widehat{\mathfrak{D}}(2,1;\theta))$ associated to any simple root systems, see the proof of Theorem \ref{d21} and Theorem \ref{d212}. Note that this shuffle algebra realization can be easily extended from the positive part to the whole algebra, see \cite {Neg13} for more details. The problem of giving shuffle algebra realization for quantum toroidal $\ddot{U}_{v_{1},v_{2}}(\mathfrak{D}(2,1;\theta))$ algebra has been posed in \cite{FJM18} to study the quantization of $\widehat{\mathfrak{sl}}_{2}$ coset vertex operator algebra, and our motivations start from there.

We give an outline of our proofs and state the meaning of our results. First we define the shuffle algebra $\Omega$ associated to $\mathfrak{D}(2,1;\theta)$, by finding certain wheel conditions that are used to replace the role of quantum Serre relations in the quantum affine algebra $\dto$. Then there is a natural morphism $\varphi$ from $\dto$ to $\Omega$ in Drinfeld realization. To prove the surjectivity of $\varphi$, following ideas in \cite{Tsy18}, we construct certain ordered monomials of quantum affine root vectors as PBW type elements in $\dto$ and show their images under $\varphi$ constitute a basis for $\Omega$. The difficulty is that the standard specialization map used in \cite{Tsy18}, which is one main tool when studying shuffle algebras in type $A$ cases, behaved badly in our case. We overcome this by defining a more complicated specialization map that is compatible with the wheel conditions in our setting. We believe that our results shine a light on giving an answer to the conjectural shuffle algebra realizations for any finite Dynkin diagrams.

To prove the injectivity of $\varphi$, we choose a different method from Tsymbaliuk's. Similar to the type $A$ case considered in \cite{HRZ08}, we show that in our case the ordered monomials of quantum affine root vectors also span the whole algebra, thus the linearly independence of their images in shuffle algebras would give us the injectivity of this morphism. While Tsymbaliuk's idea is based on the existence of compatible nondegenerate pairings on both sides, see \cite[Proposition 3.4]{Tsy18} and \cite{Neg13} for more details.

As a byproduct, we construct PBW type bases for $\dto$ in the Drinfeld realization, which shows the benefits of shuffle algebra realization of quantum affine algebras. Note that the PBW bases for quantum affine algebras had been established a long time ago in the standard Drinfeld-Jimbo presentation, there seems to be missing in literatures a clear proof of PBW property for them in the Drinfeld realization, for more details on this see the introduction in \cite{Tsy18}. This proof of PBW property for quantum affine algebras in Drinfeld realization by comparing them to the corresponding shuffle algebra is a natural generalization of the usual proof of PBW bases theorem for quantum enveloping algebras, which is by comparing them to the skew symmetric algebras.

In generators and relations, $\mathfrak{D}(2,1;\theta)$ with distinguished simple root system is constructed by gluing three $\mathfrak{sl}(2|1)$ algebras using Serre relations, thus we first give the shuffle algebra realization of $\QAP$ in odd simple root system. Moreover, we also determine this shuffle algebra when $v$ is a primitive root of unity. When $v$ is generic, shuffle algebras are generated by degree one elements. However, when $v$ is a primitive root of unity, the degree one elements only generate a subalgebra, and we need more wheel conditions to determine it. For example, the positive part of $U_{v}(\widehat{\mathfrak{sl}}(2))$ is isomorphic to the symmetric Laurent polynomials with shuffle product, and under this isomorphism the PBW bases correspond to Hall-Littlewood Laurent polynomials. When $v$ is a primitive root of unity, the corresponding shuffle algebra generated by degree one elements consists of symmetric Laurent polynomials spanned by ``admissible'' Hall-Littlewood Laurent polynomials. It is proved that this subspace is determined by certain wheel condition, see \cite[Proposition 3.5]{FJMMT03}. For $\QA$, its positive part is isomorphic to doubly skew-symmetric Laurent polynomials with prescribed poles. We show when $v$ is a primitive root of unity of even order, the corresponding shuffle algebra generated by degree one elements is also determined by certain wheel condition, see the proof of Theorem \ref{wheelcondition}.

When we initiate this work, the paper \cite{Tsy19} had not came out and the shuffle algebra realization of $\QAG$ with non-distinguished simple root system was still unknown, so we give a detailed proof of shuffle algebra realization for $\QA$ with odd root system when $v$ is generic. We choose to preserve this part because our arguments differ from Tsymbaliuk's arguments in some parts and it is also needed for other parts of this paper. Also it can be served as an introduction to shuffle algebras by studying an example with details.

The paper is organized as follows. In Section \ref{U2}, we define the shuffle algebra $\Lambda$ associated to odd simple root system of $\mathfrak{sl}(2|1)$ and prove the isomorphism $\varphi\colon \QAP\xrightarrow{\sim} \Lambda$. When $v$ is a primitive root of unity of even order, the shuffle product can be still defined, we consider the subalgebra $\Lambda^{\zeta}$ generated by degree one elements, and prove that it is isomorphic to the subalgebra $\Lambda^{w}$ defined by certain wheel condition. In Section \ref{Ud}, we give shuffle algebra realization of $\dto$ associated to all simple root systems and prove their PBW property. 

\subsection*{Acknowledgements}We are grateful to Alexander Tsymbaliuk, whose generous help and advice was crucial in the process of our work. We also would like to thank Michael Finkelberg for useful discussions and suggestions.

\section{shuffle realization of $U_{v}^{>}(\widehat{\mathfrak{sl}}(2|1))$}\label{U2}

\subsection{$U_{v}^{>}(\widehat{\mathfrak{sl}}(2|1))$ and a spanning set}

Consider the free $\mathbb{Z}$-module $\oplus_{i=1}^{3}\epsilon_{i}$ with bilinear form $(\epsilon_{i},\epsilon_{j})=(-1)^{\delta_{i=3}}\delta_{ij}$. Instead of the distinguished simple root system $\{\epsilon_{1}-\epsilon_{2},\epsilon_{2}-\epsilon_{3}\}$, we choose the simple roots to be $\{\alpha_{1}=\epsilon_{1}-\epsilon_{3},\alpha_{2}=\epsilon_{3}-\epsilon_{2}\}$, which both are odd roots. The positive roots are $\Psi^{+}=\{\alpha_{1},\alpha_{2},\gamma=\alpha_{1}+\alpha_{2}\}$. The Cartan matrix is $\bigl(\begin{smallmatrix} 0 & 1 \\ 1 & 0 \end{smallmatrix}\bigr)$. 
 Following \cite[Theorem 8.5.1]{Yam99}, in the Drinfeld realization, $\QAP$ is the quantum superalgebra over $\mathbb{C}(v)$ with generators $\{p_{i}, q_{i}, i\in \mathbb{Z}\}$ and relations
\begin{equation}
\begin{aligned}
p_{i}p_{j}+p_{j}p_{i}&=0,\\
q_{i}q_{j}+q_{j}q_{i}&=0,\\
p_{i+1}q_{j}+vq_{j}p_{i+1}&=-vp_{i}q_{j+1}-q_{j+1}p_{i},\label{eq:relation21}
\end{aligned}
\end{equation}
here the parity of generators are given by $p(p_{i})=p(q_{j})=1$ and we denote by $[x,y]_{u}:=xy-(-1)^{|x||y|}uyx$ the super bracket. We will simply write $[x,y]$ for $[x,y]_{1}$. The following formulae can be directly checked from the above defining relations \eqref{eq:relation21}.

\begin{Lem}\label{comm}
\begin{enumerate}[leftmargin=*]
\item $q_{s}p_{k}=v[p_{k},q_{s}]_{v^{-1}}-vp_{k}q_{s}$.
\item $[p_{k},q_{s}]_{v^{-1}}+v[p_{k+1},q_{s-1}]_{v^{-1}}=(v-v^{-1})p_{k+1}q_{s-1}$.
\item $q_{s}[p_{k},q_{s}]_{v^{-1}}=v[p_{k},q_{s}]_{v^{-1}}q_{s}$.
\item $[p_{k},q_{s}]_{v^{-1}}p_{k}=vp_{k}[p_{k},q_{s}]_{v^{-1}}$.
\end{enumerate}
\end{Lem}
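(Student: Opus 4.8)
The plan is to unfold the super bracket once and then reduce each of the four identities to the defining relations \eqref{eq:relation21}. Since $p_{k}$ and $q_{s}$ are both odd, $(-1)^{|p_{k}||q_{s}|}=-1$ and hence
\[
[p_{k},q_{s}]_{v^{-1}}=p_{k}q_{s}+v^{-1}q_{s}p_{k}.
\]
Every statement then becomes an assertion about a short word in the generators.

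Part (1) is purely formal: substituting the expression above into the right-hand side gives $v(p_{k}q_{s}+v^{-1}q_{s}p_{k})-vp_{k}q_{s}=q_{s}p_{k}$, using no relation whatsoever. For parts (3) and (4) the essential input is that specializing the first two relations of \eqref{eq:relation21} to $i=j$ yields $2p_{k}^{2}=0$ and $2q_{s}^{2}=0$, so $p_{k}^{2}=q_{s}^{2}=0$ over $\mathbb{C}(v)$. Expanding both sides of (3), the terms containing $q_{s}^{2}$ drop out and each side reduces to $q_{s}p_{k}q_{s}$; symmetrically, the terms containing $p_{k}^{2}$ kill the extra summands in (4), leaving $p_{k}q_{s}p_{k}$ on both sides. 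Thus (1), (3), (4) follow from the homogeneous (anti)commutation relations alone.

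The only identity that genuinely uses the mixed relation is (2). I would rewrite the third relation of \eqref{eq:relation21} under the substitution $(i,j)\mapsto(k,s-1)$, namely
\[
p_{k+1}q_{s-1}+vq_{s-1}p_{k+1}=-vp_{k}q_{s}-q_{s}p_{k},
\]
and expand the left-hand side of (2) as $p_{k}q_{s}+v^{-1}q_{s}p_{k}+vp_{k+1}q_{s-1}+q_{s-1}p_{k+1}$. Solving the displayed relation (after scaling by $v^{-1}$) for $q_{s-1}p_{k+1}$ and substituting, the terms $p_{k}q_{s}$ and $v^{-1}q_{s}p_{k}$ cancel and the two $p_{k+1}q_{s-1}$ contributions combine into $(v-v^{-1})p_{k+1}q_{s-1}$, which is the claim.

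I do not anticipate a genuine obstacle, as all four are one-line computations once the bracket is expanded. The only points deserving care are the index alignment in (2)---matching the shift $(i,j)=(k,s-1)$ with the bracket $[p_{k+1},q_{s-1}]_{v^{-1}}$---and the observation that it is precisely the vanishing of the squares $p_{k}^{2}$ and $q_{s}^{2}$ that makes (3) and (4) hold.
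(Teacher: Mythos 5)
Your proof is correct and is exactly the direct verification the paper alludes to when it states that these formulae ``can be directly checked from the above defining relations'' \eqref{eq:relation21}: expand $[p_{k},q_{s}]_{v^{-1}}=p_{k}q_{s}+v^{-1}q_{s}p_{k}$, use $p_{k}^{2}=q_{s}^{2}=0$ for (3) and (4), and the mixed relation at $(i,j)=(k,s-1)$ for (2). All index alignments and cancellations check out.
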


We will also use the following formulae for super bracket, see \cite[6.9]{Yam99}.

\begin{Lem}[6.9,\cite{Yam99}]\label{bra}
Let $U$ be a superalgebra over $\mathbb{C}(v)$. For any $X,Y,Z\in U$ and $a,b,c\in \mathbb{C}(v)$, we have
\begin{equation}
\begin{aligned}
&[[X,Y]_{a},Z]_{b}=[X,[Y,Z]_{c}]_{abc^{-1}}+(-1)^{p(Y)p(Z)}c[[X,Z]_{bc^{-1}},Y]_{ac^{-1}},\\
&[X,[Y,Z]_{a}]_{b}=[[X,Y]_{c},Z]_{abc^{-1}}+(-1)^{p(X)p(Y)}c[Y,[X,Z]_{bc^{-1}}]_{ac^{-1}}.
\end{aligned}
\end{equation}
\end{Lem}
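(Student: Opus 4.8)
The plan is to verify both identities by direct expansion into ordered (noncommutative) monomials in $X,Y,Z$ and comparison of coefficients, using nothing beyond the definition $[x,y]_u=xy-(-1)^{p(x)p(y)}u\,yx$ together with the observation that, for homogeneous $x,y$, the element $[x,y]_u$ is again homogeneous of parity $p(x)+p(y)$. This last fact is what controls the outer signs when a bracket is itself bracketed with a third element.

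For the first identity I would first expand the left-hand side as
\[
[[X,Y]_a,Z]_b=XYZ-(-1)^{p(X)p(Y)}a\,YXZ-(-1)^{(p(X)+p(Y))p(Z)}b\,ZXY+(-1)^{(p(X)+p(Y))p(Z)+p(X)p(Y)}ab\,ZYX,
\]
so that the left side involves only the four orderings $XYZ,\,YXZ,\,ZXY,\,ZYX$. Expanding the two terms on the right, the inner brackets $[Y,Z]_c$ and $[X,Z]_{bc^{-1}}$ a priori generate all six orderings of $X,Y,Z$. The key point is that the coefficients of the two ``extra'' monomials $XZY$ and $YZX$ cancel between $[X,[Y,Z]_c]_{abc^{-1}}$ and $(-1)^{p(Y)p(Z)}c\,[[X,Z]_{bc^{-1}},Y]_{ac^{-1}}$, while the surviving coefficients of $XYZ,YXZ,ZXY,ZYX$ match those on the left. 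The only genuine checks are therefore these four scalar-and-sign matchings; for example the coefficient of $ZYX$ on the right comes out as $(-1)^{p(X)(p(Y)+p(Z))+p(Y)p(Z)}ab$, which equals the left-hand coefficient because $(p(X)+p(Y))p(Z)+p(X)p(Y)\equiv p(X)(p(Y)+p(Z))+p(Y)p(Z)\pmod 2$, the two exponents differing only by the even quantity $2p(Y)p(Z)$.

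For the second identity I would either run the identical expansion, or derive it more economically from the first by applying the graded antisymmetry relation
\[
[x,y]_u=-(-1)^{p(x)p(y)}u\,[y,x]_{u^{-1}}
\]
to each bracket (equivalently, by passing to the opposite superalgebra). This substitution interchanges left-nested and right-nested double brackets and, after renaming the scalar parameters $a,b,c$, carries the first identity to the second.

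The entire argument is routine; the only real obstacle is the bookkeeping of the signs $(-1)^{p(\cdot)p(\cdot)}$ and the scalar factors $a,b,c$, which is handled uniformly by reducing each exponent modulo $2$ and repeatedly discarding terms of the form $2p(Y)p(Z)$. No conceptual input beyond the definition of the $u$-bracket is needed.
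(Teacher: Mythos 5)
Your proposal is correct: expanding both sides into the six monomial orderings of $X,Y,Z$, the coefficients of $XZY$ and $YZX$ (respectively $YXZ$ and $ZXY$ for the second identity) do cancel between the two right-hand terms, and the four surviving coefficients match the left-hand side exactly, the essential point being, as you note, that $[x,y]_u$ is homogeneous of parity $p(x)+p(y)$. The paper gives no proof of this lemma, simply citing \cite[6.9]{Yam99}, and your direct verification is the standard argument one would find there.
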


Following \cite[Subsection 2.2]{Tsy18}, let $r_{i}=[p_{i},q_{0}]_{v^{-1}}$, see also \cite[Definition 3.9]{HRZ08} and \cite[Definition 3.11]{Zha14}. Then $\{p_{i},q_{j},r_{k}\}_{i,j,k\in\mathbb{Z}}$ are quantum affine root vectors corresponding to positive roots. Let $H$ be the set of functions $h\colon \Psi^{+}\times \mathbb{Z}\rightarrow \mathbb{N}$ with finite support and such that $h(\alpha_{i},k)\leqslant 1$. Now for any $\beta\in\Psi^{+}$, since $h$ has finite support, the set of integers $i\in\mathbb{Z}$ with multiplicity $h(\beta,i)\neq 0$ form a partition $\lambda_{\beta}=(\lambda_{\beta,1}\leqslant\cdots\leqslant\lambda_{\beta,k_{\beta}})$. We can define an ordered monomial $E_{h}$ as follows
\begin{equation}
\prod_{1\leqslant i\leqslant k_{\alpha_{1}}}p_{\lambda_{\alpha_{1},i}}\prod_{1\leqslant i\leqslant k_{\gamma}}r_{\lambda_{\gamma,i}}\prod_{1\leqslant i\leqslant k_{\alpha_{2}}}q_{\lambda_{\alpha_{2},i}}.
\end{equation}
Let $U'\subset\QAP$ be the spanning set of these $E_{h}$ over $\mathbb{C}(v)$.

\begin{Prop}\label{commutation}
For any $i,j,k,s\in\mathbb{Z}$, the elements $[p_{i},q_{j}]_{v^{-1}},[p_{i},r_{k}]_{v^{-1}},[q_{j},r_{k}]_{v^{-1}},[r_{k},r_{s}]_{v^{2}}$ are all belonging to $U'$.
\end{Prop}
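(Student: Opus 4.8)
The plan is to establish the four memberships in the order in which they are listed, since the proof of each will use the preceding ones, and in each case to reduce the bracket to the PBW order $p\prec r\prec q$ by expanding every root vector as $r_k=[p_k,q_0]_{v^{-1}}=p_kq_0+v^{-1}q_0p_k$ and rewriting the resulting words with the defining relations. The two elementary moves I would use are Lemma \ref{comm}(1), which carries a single $q$ to the right of a single $p$ at the cost of a $[p,q]_{v^{-1}}$-term, and the index-shifted form of \eqref{eq:relation21}, namely $q_jp_{i+1}=-p_iq_{j+1}-v^{-1}q_{j+1}p_i-v^{-1}p_{i+1}q_j$, which transports the index of a $p$ across that of a neighbouring $p$ (and whose rearrangement transports it in the opposite direction). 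Throughout, note that each $r_k$ is even, so the bracket signs for $[p_i,r_k]$, $[q_j,r_k]$ and $[r_k,r_s]$ are the ordinary commutator signs.

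For $[p_i,q_j]_{v^{-1}}$ I would run the recursion of Lemma \ref{comm}(2): from $[p_i,q_j]_{v^{-1}}=(v-v^{-1})p_{i+1}q_{j-1}-v[p_{i+1},q_{j-1}]_{v^{-1}}$, iterating lowers the index of $q$ toward $0$ (or raises it, when $j<0$) until the bracket becomes $[p_{i+j},q_0]_{v^{-1}}=r_{i+j}$. The outcome is a finite combination of the ordered monomials $p_aq_b$ together with $r_{i+j}$, all of which lie in $U'$.

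For $[p_i,r_k]_{v^{-1}}=p_ir_k-v^{-1}r_kp_i$ the monomial $p_ir_k$ is already ordered, so it suffices to bring $r_kp_i$ into $U'$. Expanding $r_k$ produces the words $p_kq_0p_i$ and $q_0p_kp_i$. The crucial observation is that one must \emph{not} apply Lemma \ref{comm}(1) to the leftmost $q$--$p$ adjacency of $q_0p_kp_i$, since this merely regenerates the unordered term $r_kp_i$ and the argument becomes circular; instead I would use the shifted relation above to push the index of the $p$ adjacent to $q_0$ step by step toward the index $i$ of the other $p$. After finitely many steps the two $p$-indices coincide and the term vanishes because $p_m^2=0$, while every term shed along the way has the shape (single $p$)(single $q$)(single $p$). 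Such a word $p_aq_mp_b$ is reduced to $U'$ by applying Lemma \ref{comm}(1) to its \emph{rightmost} $q$--$p$ pair and invoking the already proved case $[p_b,q_m]_{v^{-1}}\in U'$, left-multiplication by $p_a$ preserving membership in $U'$. The bracket $[q_j,r_k]_{v^{-1}}$ is treated by the mirror-image reduction, pushing two $q$-indices together.

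The bracket $[r_k,r_s]_{v^2}$ is where the real work lies. I would first apply Lemma \ref{bra} to $[r_k,[p_s,q_0]_{v^{-1}}]_{v^2}$ with $Z=q_0$ and parameter $c=v$, which rewrites it as $[[r_k,p_s]_v,q_0]_1+v[p_s,[r_k,q_0]_v]_{v^{-2}}$; since $[r_k,p_s]_v=-v[p_s,r_k]_{v^{-1}}$ and $[r_k,q_0]_v=-v[q_0,r_k]_{v^{-1}}$, both inner brackets already lie in $U'$ by the previous two cases. It then remains to multiply these elements of $U'$ of degrees $\alpha_1+\gamma$ and $\gamma+\alpha_2$ by $q_0$, respectively $p_s$, and to reorder the resulting words of degree $2\gamma$ into the PBW monomials $r_cr_d$ $(c\le d)$, $p_ar_cq_b$ and $p_ap_bq_cq_d$ by the same two elementary moves. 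The main obstacle is that this final reordering genuinely produces $r_cr_d$-products with $c>d$, which feed back into the very statement being proved; closing the argument therefore requires an induction (on the disorder of the index pair, the total energy being conserved) showing that each such feedback term is strictly simpler, and verifying that the induction terminates is the crux of the proposition.
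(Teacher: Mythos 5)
Your treatment of the first three brackets is sound: the reduction of $[p_i,q_j]_{v^{-1}}$ via the recursion of Lemma \ref{comm}(2) is exactly the paper's argument, and your word-level rewriting for $[p_i,r_k]_{v^{-1}}$ and $[q_j,r_k]_{v^{-1}}$ (pushing the two $p$-indices together until $p_m^2=0$ kills the term, and reducing the shed words $p_aq_mp_b$ through their rightmost $q$--$p$ pair) is a workable, if more laborious, substitute for the paper's closed formula $[p_{i},r_{k}]_{v^{-1}}=(v^{-1}-v)\sum_{l}(-v)^{l-1}p_{k+l}[p_{i},q_{-l}]_{v^{-1}}$ obtained from Lemma \ref{comm}(2),(4).

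The fourth bracket is where your proof is genuinely incomplete, and you say so yourself: you set up a decomposition that produces disordered products $r_cr_d$ with $c>d$ and then defer "verifying that the induction terminates," which is precisely the entire content of this case. The deferral is not harmless. Tracing your expansion, the second summand $v[p_s,[r_k,q_0]_v]_{v^{-2}}$ contains (via $[r_k,q_0]_v=(1-v^2)r_kq_0$ and $q_0p_s=vr_s-vp_sq_0$) the term $r_kr_s$ itself with coefficient $1-v^2$, and the first summand sheds further products $r_ar_c$ with $a+c=k+s$ whose disorder $|a-c|$ is not visibly smaller than $|k-s|$; so the "induction on the disorder of the index pair" has neither a verified decrease nor identified base cases. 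The paper avoids all of this by applying the \emph{first} identity of Lemma \ref{bra} to $[[p_k,q_0]_{v^{-1}},r_s]_{v^2}$ with $c=v$, so that the term $[q_0,r_s]_v$ appears --- and this vanishes identically by Lemma \ref{comm}(3), since $q_0r_s=vr_sq_0$. The surviving term is then massaged, using $r_s=-v^{-1}[p_{s+1},q_{-1}]_v$ and one more application of Lemma \ref{bra} with $[p_k,p_{s+1}]_1=0$, into the exact ladder identity $[r_k,r_s]_{v^2}=[r_{s+1},r_{k-1}]_{v^2}$, which decreases $k-s$ by $2$ at each step and bottoms out at $[r_k,r_k]_{v^2}=(1-v^2)r_k^2\in U'$ and $[r_k,r_{k-1}]_{v^2}=-[[p_k,[p_k,q_{-1}]_v]_v,q_0]_{v^{-2}}=0$. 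To repair your argument you would either need to carry out this coefficient bookkeeping and exhibit a strictly decreasing measure with explicit base cases, or switch to the paper's choice of bracket expansion, which makes the problematic term vanish before any feedback can occur.
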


\begin{proof}
We can assume $i,j\geqslant k\geqslant s\geqslant 0$, other cases are similar. First by Lemma \ref{comm} (2) we have $[p_{i},q_{j}]_{v^{-1}}=(-v)^{j}r_{i+j}+(v-v^{-1})\sum_{k=1}^{j}(-v)^{k-1}p_{i+k}q_{j-k}$. Hence $[p_{i},q_{j}]_{v^{-1}}\in U'$. Next by Lemma \ref{comm} (2),(4) we have $[p_{i},r_{k}]_{v^{-1}}=(v^{-1}-v)\sum_{l=1}^{i-k}(-v)^{l-1}p_{k+l}[p_{i},q_{-l}]_{v^{-1}}$. Hence $[p_{i},r_{k}]_{v^{-1}}\in U'$, and similarly by Lemma \ref{comm} (2),(3) we get $[q_{j},r_{k}]_{v^{-1}}\in U'$. Finally, we deal with $[r_{k},r_{s}]_{v^{2}}$. By Lemma \ref{comm} and Lemma \ref{bra} we have 
\begin{equation}
\begin{aligned}
{[r_{k},r_{s}]_{v^{2}}}&=[[p_{k},q_{0}]_{v^{-1}},[p_{s},q_{0}]_{v^{-1}}]_{v^{2}}\\
&=[p_{k},[q_{0},[p_{s},q_{0}]_{v^{-1}}]_{v}]+v[[p_{k},[p_{s},q_{0}]_{v^{-1}}]_{v},q_{0}]_{v^{-2}}\\
&=-[[p_{k},[p_{s+1},q_{-1}]_{v}]_{v},q_{0}]_{v^{-2}}\\
&=v[[p_{s+1},r_{k-1}]_{v},q_{0}]_{v^{-2}}\\
&=[r_{s+1},r_{k-1}]_{v^{2}}+v[p_{s+1},[r_{k-1},q_{0}]_{v^{-1}}]\\
&=[r_{s+1},r_{k-1}]_{v^{2}}\\
&=-v^{2}[r_{k-1},r_{s+1}]_{v^{2}}+(1-v^{4})r_{s+1}r_{k-1}.
\label{commrel}
\end{aligned}
\end{equation}
Thus if $k-s>1$ then $[r_{k},r_{s}]_{v^{2}}\in U'$ if and only if $[r_{k-1},r_{s+1}]_{v^{2}}\in U'$, so we only need to prove $[r_{k},r_{k}]_{v^{2}}\in U'$, which is obvious, and $[r_{k},r_{k-1}]_{v^{2}}\in U'$. In \eqref{commrel} let $s=k-1$ we have $[r_{k},r_{k-1}]_{v^{2}}=-[[p_{k},[p_{k},q_{-1}]_{v}]_{v},q_{0}]_{v^{-2}}=0$, this completes our proof.
\end{proof}

Now we have our main theorem of this subsection, that $U'$ actually equals to $\QAP$.
\begin{Thm}\label{pbw2}
The set of ordered monomials $\{E_{h}\}_{h\in H}$ is a spanning set for $\QAP$.
\end{Thm}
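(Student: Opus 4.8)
The plan is to prove that the span $U'=\mathrm{span}\{E_h\}_{h\in H}$ is stable under left multiplication by the algebra generators $p_i$ and $q_j$. Since $\QAP$ is generated by $\{p_i,q_j\}_{i,j\in\BZ}$ and $1\in U'$, this stability forces $U'=\QAP$, which is the assertion. Stability under $p_i$ is immediate: writing an ordered monomial as $E_h=P\,R\,Q$, where $P,R,Q$ are the increasing products of the $p$'s, $r$'s and $q$'s, the relation $p_ip_k+p_kp_i=0$ lets us sort $p_i$ into its slot inside $P$, producing $\pm E_{h'}$ (or $0$ if the index $i$ already occurs). Hence the whole difficulty is concentrated in showing $q_j\cdot E_h\in U'$.

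To slide $q_j$ rightward toward $Q$ I would use $q_jp_k=-vp_kq_j+v[p_k,q_j]_{v^{-1}}$ from Lemma~\ref{comm}(1) together with $q_jr_m=v^{-1}r_mq_j+[q_j,r_m]_{v^{-1}}$, where by Proposition~\ref{commutation} both commutators already lie in $U'$. Being homogeneous of weights $\gamma$ and $\alpha_1+2\alpha_2$, they are forced to be combinations of ordered monomials of those weights: $[p_k,q_j]_{v^{-1}}$ is a combination of terms $r_a$ and $p_aq_b$, while $[q_j,r_m]_{v^{-1}}$ is a combination of terms $r_aq_b$ and $p_aq_bq_c$. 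The summand in which $q_j$ passes every factor with a scalar coefficient and then sorts into $Q$ is an ordered monomial; the trouble is the correction terms, which insert one of these commutators into the middle of the word. In particular the $p_aq_bq_c$ part of $[q_j,r_m]_{v^{-1}}$ \emph{creates a fresh $p$ to the left of the surviving $r$'s}, raising both the length of the word and the number of $p$-factors. This is the main obstacle: a naive induction on word length, or on the number of $p$'s, breaks down precisely because these $r\rightsquigarrow p\,q$ ``splitting'' corrections move in the wrong direction.

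The resolution I propose is to induct on $N:=(\#\,p\text{-factors})+(\#\,r\text{-factors})$ of $E_h$, the $\alpha_1$-height of its weight, proving \emph{simultaneously} the companion statement $r_m\cdot E_h\in U'$ for all $m$; multiplication by $p_i$, already known to preserve $U'$, is used freely. For $q_j\cdot PRQ$ with $P$ nonempty, commuting $q_j$ past the first factor $p_{k_1}$ gives $-vp_{k_1}\bigl(q_j\cdot P'RQ\bigr)+v[p_{k_1},q_j]_{v^{-1}}\,P'RQ$, where $P'RQ$ has $\alpha_1$-height $N-1$; expanding the commutator into $r_a$ and $p_aq_b$ and peeling the leading $p$'s off to the left, every summand is either $r_a\cdot(P'RQ)$ or $p_a\cdot\bigl(q_b\cdot P'RQ\bigr)$, i.e.\ an application of the two statements at height $N-1$ followed by harmless left $p$-multiplications. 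If $P$ is empty one instead pushes $q_j$ past the first $r$ and argues identically via $[q_j,r_m]_{v^{-1}}$, again landing on the two statements at height $N-1$. For the companion statement $r_m\cdot PRQ$ with $P$ nonempty, the relation $r_mp_{k_1}=vp_{k_1}r_m-v[p_{k_1},r_m]_{v^{-1}}$ with $[p_{k_1},r_m]_{v^{-1}}\in U'$ of weight $2\alpha_1+\alpha_2$ (a combination of $p_ar_b$ and $p_ap_{a'}q_b$, both beginning with $p$) reduces everything to the two statements at height $N-1$. The base cases are $N=0$, where $E_h=Q$ and both $q_j\cdot Q$ and $r_m\cdot Q$ are ordered up to sorting $Q$, and the companion statement $r_m\cdot RQ$ with no $p$'s, which only requires reordering $r_m$ into $R$ — a self-contained induction on index gaps using $[r_k,r_s]_{v^2}\in U'$ and the vanishing $[r_k,r_{k-1}]_{v^2}=0$ established inside the proof of Proposition~\ref{commutation}. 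The essential point, and the part demanding care, is the verification that every correction recurses onto $q_j\cdot E'$ or $r_m\cdot E'$ with $E'$ of strictly smaller $\alpha_1$-height, so that the joint induction is well founded despite the length-increasing splittings.
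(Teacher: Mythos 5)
Your argument is correct and rests on the same engine as the paper's proof: both reduce the theorem to straightening arbitrary words by means of the commutators computed in Proposition~\ref{commutation}, and both proceed by induction. The difference is in how the induction is organized, and your version is the more robust one. The paper inducts on the length of words; but, exactly as you observe, the correction term $p_aq_bq_c\cdot w''$ produced when $q_j$ crosses an $r$-factor has the \emph{same} length as the word $q_jr_m\cdot w''$ it came from, so the length induction does not apply to it directly --- the paper salvages this by applying the inductive hypothesis to the inner subword $q_c\cdot w''$ and then asserting that one ``gets back to the cases considered above,'' which implicitly invokes precisely the auxiliary statement you make explicit, namely that left multiplication by $r_a$ preserves $U'$. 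Your joint induction on the $\alpha_1$-height $N=(\#\,p)+(\#\,r)$, proving $q_j\cdot E_h\in U'$ and $r_m\cdot E_h\in U'$ simultaneously, makes the recursion manifestly well-founded: every correction term, after stripping leading $p$'s (harmless) and leading $r$'s (the companion statement at height $N-1$), recurses onto an argument of strictly smaller height, since left multiplication by $q$ leaves the $\alpha_1$-height unchanged. The one step both arguments leave at the same level of detail is the insertion of an $r_m$ into an already ordered product of $r$'s; this needs the secondary induction on index gaps you mention, using that $[r_k,r_s]_{v^2}$ is a combination of ordered products $r_{s+i}r_{k-i}$ with strictly smaller gap and that $[r_k,r_{k-1}]_{v^2}=0$, and it would be worth spelling out in either version. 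No gap; your write-up is, if anything, the more careful of the two.
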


\begin{proof}
For any nonzero word $w=e_{1}\cdot e_{2}\cdots e_{n}\in \QAP$, in which each $e_{i}$ is $p_{j}$ or $q_{k}$, we call $n$ to be the length of $w$, and denote it by $l(w)=n$. For any element $x$ in $\QAP$, $x$ is a finite sum of words and define its length $l(x)$ to be the maximal length of its words. Using the commutation relations given in Proposition \ref{commutation}, we will prove that any word, hence any element $x$ in $\QAP$, is a finite linear combination of the above ordered monomials $E_{h}$, and we will prove it by induction on the length of elements. In the following we will omit the unnecessary coefficients in $\mathbb{C}(v)$ in the equations, and use the symbol $\approx$ to denote an equation without considering the coefficients. For example if $A=c_{1}(v)B+c_{2}(v)C$, then we have $A\approx B+C$. Suppose for any $w$ such that $l(w)\leqslant n$, the above statement is true. Now take any $w$ such that $l(w)=n+1$. If $w=p_{i_{0}}\cdot w'$, then by induction we are done. If $w=q_{i_{0}}\cdot w'$, then by induction we can assume $w'$ is an ordered monomial $E_{h}$. If $w'=w''\cdot q_{k_{0}}$, then by induction we are done. If $w'=p_{k_{0}}\cdot w''$, then $q_{i_{0}}\cdot w'=q_{i_{0}}p_{k_{0}}\cdot w''\approx r_{k_{0}+i_{0}}\cdot w''+\sum p_{i}q_{j}\cdot w''$. If $w''=\prod r_{j}$, then by induction we are done. If $w''=p_{j_{0}}\cdot w'''$, then $r_{k_{0}+i_{0}}p_{j_{0}}\cdot w'''\approx \sum p_{i}r_{j}\cdot w'''+\sum p_{i}p_{j}q_{k}\cdot w'''$, and by induction we are done. Finally, let $w'=\prod_{j=1}^{n} r_{t_{j}}=r_{t_{1}}\cdot w''$, then $w=q_{i_{0}}\cdot w'=q_{i_{0}}r_{t_{1}}\cdot w''\approx\sum r_{i}q_{j}\cdot w''+\sum p_{i}q_{j}q_{k}\cdot w''$, using induction on the element $q_{j}\cdot w''$ we get back to the cases considered above and thus we are done.
\end{proof}

\subsection{Shuffle algebra $\Lambda$}\label{sl21}

Let $\Lambda=\bigoplus_{n,m\in \mathbb{N}}\Lambda_{n,m}$ be graded vector spaces over $\mathbb{C}(v)$, where $\Lambda_{n,m}$ consists of rational functions $F$ in the variables $\{x_{1},\dots,x_{n},y_{1},\dots,y_{m}\}$ and satisfies the following conditions:
\begin{enumerate}[leftmargin=*]
\item $F$ is skew-symmetric with respect to $\{x_{i}\}_{1\leqslant i\leqslant n}$ and $\{y_{j}\}_{1\leqslant j\leqslant m}$.
\item $F=\frac{f}{\prod_{1\leqslant i\leqslant n,1\leqslant j\leqslant m}(x_{i}-y_{j})}$, where $f\in \mathbb{C}(v)[x_{i}^{\pm 1},y_{j}^{\pm 1}]$ is a Laurent polynomial.
 \end{enumerate}

Denote by $\mathfrak{S}_{n}$ the symmetric group of order $n$. For $F\in \Lambda_{k_{1},l_{1}}, G\in \Lambda_{k_{2},l_{2}}$, we define the shuffle product $F\star G\in \Lambda_{k_{1}+k_{2},l_{1}+l_{2}}$ as

\begin{equation}
\begin{aligned}
&F\star G=\text{ASym}_{\mathfrak{S}_{k_{1}+k_{2}}\times \mathfrak{S}_{l_{1}+l_{2}}}\Big(F(\{x_{i},y_{j}\}_{1\leqslant i\leqslant k_{1}}^{1\leqslant j\leqslant l_{1}})\cdot G(\{x_{i},y_{j}\}_{k_{1}<i\leqslant k_{1}+k_{2}}^{l_{1}<j\leqslant l_{1}+l_{2}})\\
&\cdot\prod_{1\leqslant i\leqslant k_{1}}^{ l_{1}+1\leqslant j\leqslant l_{1}+l_{2}}\frac{x_{i}+v^{-1}y_{j}}{x_{i}-y_{j}}\prod_{k_{1}+1\leqslant i\leqslant k_{1}+k_{2}}^{ 1\leqslant j\leqslant l_{1}}\frac{y_{j}+v^{-1}x_{i}}{y_{j}-x_{i}}\Big),\label{eq:sp}
 \end{aligned}
\end{equation}
where $\text{ASym}_{\mathfrak{S}_{n}\times\mathfrak{S}_{m}}$ means skew-symmetrization with respect to $\{x_{i}\}_{1\leqslant i\leqslant n}$ and $\{y_{j}\}_{1\leqslant j\leqslant m}$, that is for any rational function $f(x_{1},\dots,x_{n},y_{1},\dots,y_{m})$,
\begin{equation}
\text{ASym}_{\mathfrak{S}_{n}\times\mathfrak{S}_{m}}(f)=\frac{1}{n!m!}\sum_{\sigma\in\mathfrak{S}_{n},\tau\in\mathfrak{S}_{m}}\text{sign}(\sigma)\text{sign}(\tau)f(x_{\sigma(1)},\dots,x_{\sigma(n)},y_{\tau(1)},\dots,y_{\tau(m)}).
\end{equation}

Standardly, we have

\begin{Prop}
Under the shuffle product $\star$, $\Lambda$ is an associative $\mathbb{C}(v)$-algebra.
\end{Prop}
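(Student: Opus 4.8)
The plan is to verify two things: first that $F\star G$ again lies in $\Lambda$, so that $\star$ is a well-defined binary operation, and second that it is associative. Well-definedness of skew-symmetry is immediate, since the outermost operation in \eqref{eq:sp} is $\mathrm{ASym}_{\mathfrak{S}_{k_1+k_2}\times\mathfrak{S}_{l_1+l_2}}$. For the pole structure I would observe that the denominators produced inside the large parentheses of \eqref{eq:sp} combine into the single factor $\prod_{i,j}(x_i-y_j)$ running over all $x$-variables against all $y$-variables: the factor from $F$ accounts for the block-$1$ $x$'s against block-$1$ $y$'s, that from $G$ for block-$2$ against block-$2$, and the two cross-terms $\tfrac{1}{x_i-y_j}$ and $\tfrac{1}{y_j-x_i}$ for the two remaining cross-blocks. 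Since $\prod_{i,j}(x_i-y_j)$ is invariant, without sign, under $\mathfrak{S}_{k_1+k_2}\times\mathfrak{S}_{l_1+l_2}$, it may be pulled outside the skew-symmetrization, leaving $F\star G$ equal to a skew-symmetric Laurent polynomial (namely $\mathrm{ASym}$ of $f\cdot g$ times the mixing numerators) divided by $\prod_{i,j}(x_i-y_j)$, which is exactly condition (2).

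For associativity I would prove $(F\star G)\star H=F\star(G\star H)$ by showing both sides equal one common \emph{triple} expression: the skew-symmetrization over $\mathfrak{S}_{k_1+k_2+k_3}\times\mathfrak{S}_{l_1+l_2+l_3}$ of $F\cdot G\cdot H$ times all pairwise cross-block mixing factors, where for two variables in distinct blocks one inserts $\tfrac{x+v^{-1}y}{x-y}$ for an $x$ in the earlier block against a $y$ in the later block and $\tfrac{y+v^{-1}x}{y-x}$ in the opposite case, with respect to the ordering $1<2<3$. The key point is that the mixing factor attached to a pair $(x,y)$ depends only on the two variables and on which of their blocks comes first, not on how the blocks were grouped; hence forming $F\star G$ (mixing blocks $1,2$) and then shuffling in $H$ (mixing the merged block $1\cup 2$ against block $3$) produces exactly the collection of all cross-block factors for the ordering $1<2<3$, and the other bracketing produces the same collection.

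To turn this observation into an identity of functions I would pass from the full symmetrization in \eqref{eq:sp} to the equivalent description as a sum over shuffle (minimal coset) representatives: because $F$ and $G$ are already skew-symmetric in their own variables, averaging over $\mathfrak{S}_{k_1}\times\mathfrak{S}_{k_2}$ inside $\mathfrak{S}_{k_1+k_2}$ (and likewise for the $y$'s) merely reproduces the summand up to the explicit constant $1/\bigl(\binom{k_1+k_2}{k_1}\binom{l_1+l_2}{l_1}\bigr)$, so $\star$ is, up to this constant, a sum over shuffles of the $x$- and of the $y$-variables. In this form the nested skew-symmetrizations collapse cleanly, since a shuffle of $\{1,2\}$ followed by a shuffle of $\{1\cup 2,3\}$ is precisely a shuffle of $\{1,2,3\}$, each occurring exactly once, with matching signs.

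The main obstacle is the bookkeeping in this last step. One must check that the constants coming from the repeated $\tfrac{1}{n!\,m!}$ normalizations and from the internal anti-invariance of $F,G,H$ combine correctly, and---more substantively---that when $F\star G$ is substituted back into \eqref{eq:sp}, its own denominator $\prod(x_i-y_j)$ recombines with the newly introduced cross-block poles into the single denominator $\prod_{i,j}(x_i-y_j)$ of the triple expression, with all mixing numerators $x+v^{-1}y$ and $y+v^{-1}x$ lining up in the right order. Once this recombination of poles and numerators is verified on a single summand, the equality of the two bracketings follows formally, and associativity is established.
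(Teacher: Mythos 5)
Your argument is correct and is the standard one: the paper itself simply defers to \cite[Lemma 2.3]{FHHSY09}, whose proof proceeds exactly as you describe (closure by recombining the cross-block poles into the single denominator $\prod_{i,j}(x_i-y_j)$, which is invariant under $\mathfrak{S}_{k_1+k_2}\times\mathfrak{S}_{l_1+l_2}$ and so passes through the skew-symmetrization, and associativity by reducing $\mathrm{ASym}$ to a sum over shuffle coset representatives and using that a shuffle of $\{1,2\}$ followed by one of $\{1\cup 2,3\}$ is bijectively a shuffle of $\{1,2,3\}$ with matching signs and multinomial normalization). Nothing further is needed.
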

\begin{proof}
See the proof of \cite[Lemma 2.3]{FHHSY09}.
\end{proof}

\subsection{Isomorphism between $\QAP$ and $\Lambda$}\label{isosl21}There is a natural $C(v)$-algebra morphism $\varphi$ from $\QAP$ to $\Lambda$. Our aim is to prove $\varphi$ is actually an isomorphism.
\begin{Prop}
$p_{i}\mapsto x^{i}, q_{j}\mapsto y^{j}$ induces a $\mathbb{C}(v)$-algebra morphism $\varphi\colon \QAP\rightarrow \Lambda$.
\end{Prop}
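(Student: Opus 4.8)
The plan is to invoke the universal property of $\QAP$: since it is the associative $\mathbb{C}(v)$-algebra generated by the symbols $p_i,q_j$ subject only to the relations \eqref{eq:relation21}, an assignment on generators extends to an algebra morphism precisely when the proposed images lie in $\Lambda$ and each defining relation is sent to $0$ once the associative product is replaced by the shuffle product $\star$. First I would record that $x^i\in\Lambda_{1,0}$ and $y^j\in\Lambda_{0,1}$: each is a single-variable Laurent monomial, so it has no poles and is vacuously skew-symmetric, hence lies in $\Lambda$. It then remains to verify the three families of relations in \eqref{eq:relation21}.

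The relations $p_ip_j+p_jp_i=0$ and $q_iq_j+q_jq_i=0$ I expect to follow formally from the skew-symmetrization built into $\star$. For $F,G\in\Lambda_{1,0}$ the mixed $x$--$y$ factors in \eqref{eq:sp} are empty, so $F\star G=\text{ASym}_{\mathfrak{S}_2}\big(F(x_1)G(x_2)\big)$; taking $F=x^i$, $G=x^j$ gives $\varphi(p_i)\star\varphi(p_j)=\tfrac12(x_1^ix_2^j-x_1^jx_2^i)$, which is antisymmetric under $i\leftrightarrow j$, whence $\varphi(p_i)\star\varphi(p_j)+\varphi(p_j)\star\varphi(p_i)=0$. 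The identical computation with $y$-variables settles the $q$-relation.

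The substantive step is the mixed relation. Computing the two degree-$(1,1)$ shuffle products (here the skew-symmetrization is over the trivial group $\mathfrak{S}_1\times\mathfrak{S}_1$) gives
\begin{equation*}
\varphi(p_a)\star\varphi(q_b)=x_1^ay_1^b\,\frac{x_1+v^{-1}y_1}{x_1-y_1},\qquad
\varphi(q_b)\star\varphi(p_a)=x_1^ay_1^b\,\frac{y_1+v^{-1}x_1}{y_1-x_1}.
\end{equation*}
Substituting these into the combination $\varphi(p_{i+1})\star\varphi(q_j)+v\,\varphi(q_j)\star\varphi(p_{i+1})+v\,\varphi(p_i)\star\varphi(q_{j+1})+\varphi(q_{j+1})\star\varphi(p_i)$ and factoring out $x_1^iy_1^j/(x_1-y_1)$, the remaining bracket collapses to
\begin{equation*}
(x_1+vy_1)(x_1+v^{-1}y_1)-(vx_1+y_1)(v^{-1}x_1+y_1)=0,
\end{equation*}
since both products equal $x_1^2+(v+v^{-1})x_1y_1+y_1^2$. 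As this combination is exactly the image of \eqref{eq:relation21}, the morphism $\varphi$ is well defined.

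The main obstacle is not conceptual but organizational: one must track carefully the variable labels and the two index ranges in \eqref{eq:sp}, that is, which of the $x,y$ variables belong to $F$ versus $G$ and which of the factors $\tfrac{x+v^{-1}y}{x-y}$ or $\tfrac{y+v^{-1}x}{y-x}$ is produced, so that the pole $\tfrac{1}{x_1-y_1}$ is matched consistently across all four terms of the mixed relation. Once the labels are fixed, the cancellation reduces to the single symmetric-polynomial identity displayed above.
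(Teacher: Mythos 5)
Your proof is correct and matches the paper's approach: the paper likewise declares the first two relations straightforward and verifies only the mixed relation by the same degree-$(1,1)$ shuffle computation, showing both sides equal $\frac{(v^{-1}-v)x^{r+1}y^{s+1}}{x-y}$, which is your identity rearranged. The only content in the paper's proof not in yours is the incidental record of the explicit formula $\varphi(r_{k})=\frac{(1-v^{-2})x^{k+1}}{x-y}$, which is used later but is not needed for the proposition itself.
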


\begin{proof}
This is straightforward to check. For example let us verify the last defining relation in \eqref{eq:relation21}, since $\varphi(p_{r})=x^{r},\varphi(q_{s})=y^{j}$ we have
$$x^{r+1}\star y^{s}+vy^{s}\star x^{r+1}=\frac{(v^{-1}-v)x^{r+1}y^{s+1}}{x-y}=-vx^{r}\star y^{s+1}-y^{s+1}\star x^{r}.$$
In particular, under $\varphi$ the quantum affine root vector $r_{k}$ has the following explicit from 
$$\varphi(r_{k})=\frac{(1-v^{-2})x^{k+1}}{x-y}.$$
\end{proof}

\begin{Lem}\label{elementary}
Denote $\underbrace{\frac{1}{x-y}\star\cdots\star\frac{1}{x-y}}_{n}\star\  y^{0}\star y^{1}\star\cdots\star y^{k-1}$ by $P_{n,k}$, then $P_{n,k}$ is equal to 
$$c\cdot v^{-kn}\prod_{i=1}^{n}\frac{1-v^{-2i}}{1-v^{-2}}\cdot\frac{\prod_{1\leqslant i<j\leqslant n}(x_{i}-x_{j})\prod_{1\leqslant t<l\leqslant n+k}(y_{t}-y_{l})}{\prod_{1\leqslant i\leqslant n}^{1\leqslant j\leqslant n+k}(x_{i}-y_{j})},$$
where $c\neq 0\in\mathbb{C}$ depends on $n$ and $k$. 

Similar result holds also for $Q_{n,k}=x^{0}\star x^{1}\star\cdots\star x^{k-1}\star\underbrace{\frac{1}{x-y}\star\cdots\star\frac{1}{x-y}}_{n}$.
\end{Lem}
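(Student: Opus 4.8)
The plan is to compute $P_{n,k}$ directly by induction, exploiting the explicit form of the shuffle product \eqref{eq:sp} specialized to our generators. First I would establish the base of a double induction by computing the ``pure pole'' products $P_{n,0}=\underbrace{\frac{1}{x-y}\star\cdots\star\frac{1}{x-y}}_{n}$ and verifying that this already produces the Vandermonde factors $\prod_{i<j}(x_i-x_j)\prod_{t<l}(y_t-y_l)$ together with the single denominator $\prod_{i,j}(x_i-y_j)$ and the scalar $\prod_{i=1}^{n}\frac{1-v^{-2i}}{1-v^{-2}}$. The key mechanism is that $\frac{1}{x-y}\star F$ inserts one new variable pair and multiplies by the cross-terms $\prod_j \frac{x_{\mathrm{new}}+v^{-1}y_j}{x_{\mathrm{new}}-y_j}$ and $\prod_i \frac{y_{\mathrm{new}}+v^{-1}x_i}{y_{\mathrm{new}}-x_i}$ before skew-symmetrizing; when one skew-symmetrizes a function that already has the correct pole structure, the antisymmetrization of the remaining rational factor collapses to a Vandermonde via a standard Cauchy-type determinant identity.

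Next, to incorporate the factors $y^0\star y^1\star\cdots\star y^{k-1}$, I would argue that multiplying $P_{n,0}$ on the right by these monomials in the $y$-variables only adds a Vandermonde $\prod(y_t-y_l)$ in the extra $y$'s (this is essentially the well-known fact that $y^0\star\cdots\star y^{k-1}$ is, up to scalar, the Vandermonde determinant $\prod_{t<l}(y_t-y_l)$, since there are no $x$-variables in these factors and the skew-symmetrization of $\det(y_t^{l-1})$ gives exactly this), and tracks the power of $v^{-1}$ accumulated from the cross-terms $x_i+v^{-1}y_j$, which is what produces the overall $v^{-kn}$. The cleanest route is probably to prove the recursion $P_{n,k}\approx (\text{scalar})\cdot P_{n,k-1}$ at the level of the stated closed form and check the scalars match, rather than expanding everything at once.

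The main obstacle I expect is controlling the skew-symmetrization precisely enough to pin down the nonzero constant and the exact $v$-power, rather than merely the shape of the answer. Once one knows $P_{n,k}$ equals the claimed rational function up to an \emph{a priori} unknown scalar $c\in\mathbb{C}(v)$, showing $c\neq 0$ and extracting the explicit $v^{-kn}\prod_{i=1}^n\frac{1-v^{-2i}}{1-v^{-2}}$ requires either evaluating both sides at a convenient specialization (e.g.\ sending the $x_i$ to distinct generic points and reading off a leading coefficient) or carefully bookkeeping the sign cancellations in the antisymmetrizer. Because the statement deliberately absorbs the combinatorial scalar into ``$c\neq 0$'' while keeping the $v$-dependence explicit, I would isolate the $v$-power by counting how many factors $x_i+v^{-1}y_j$ survive at the diagonal term of the symmetrization and defer the remaining rational-number constant to $c$.

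Finally, the statement for $Q_{n,k}=x^0\star x^1\star\cdots\star x^{k-1}\star\underbrace{\frac{1}{x-y}\star\cdots\star\frac{1}{x-y}}_{n}$ follows by the symmetric argument, now inserting the pole factors on the right of a Vandermonde in the $x$-variables and using the cross-term $\prod\frac{y_j+v^{-1}x_i}{y_j-x_i}$; by the symmetry $x\leftrightarrow y$ of the construction the same closed form holds with the roles of the two sets of variables and the Vandermondes interchanged, so I would simply remark that the computation is entirely analogous.
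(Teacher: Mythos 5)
Your plan follows the same overall route as the paper: determine the shape of $P_{n,k}$ first, reduce $P_{n,k}$ to $P_{n,k-1}$ via associativity ($P_{n,k}=P_{n,k-1}\star y^{k-1}$), and then pin down the scalar by a coefficient extraction. Two corrections are needed, one cosmetic and one substantive. The cosmetic one: the ``Cauchy-type determinant identity'' does not literally apply. The numerator being antisymmetrized in $P_{n,0}$ is $\Gamma_n=\prod_{i<j}(x_i+v^{-1}y_j)(y_i+v^{-1}x_j)$, whose cross-factors occur only for $i<j$, so it is not the expansion of $\det\bigl(1/(x_i-y_j)\bigr)$ or of any determinant. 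The correct (and much cheaper) justification for the Vandermonde shape is the one the paper uses in a single line: the numerator of any element of $\Lambda_{n,n+k}$ is a skew-symmetric homogeneous Laurent polynomial in each group of variables, hence divisible by $\prod_{i<j}(x_i-x_j)\prod_{t<l}(y_t-y_l)$, and comparing total degrees shows the quotient is a scalar.

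The substantive issue is that, once the shape is known for free, the \emph{entire} content of the lemma is that this scalar is nonzero with the stated $v$-dependence, and your proposal defers exactly this step. Moreover, one of the two methods you suggest --- reading off ``the diagonal term of the symmetrization'' --- does not isolate the scalar: in $\mathrm{ASym}(\Gamma_n)$ every permutation contributes to essentially every monomial, so the identity-permutation term alone determines nothing. The paper's device is to extract the coefficient of an \emph{extreme} monomial, which only a controllable subset of permutations can produce: the coefficient of $x_1^{n-1}y_1^{n-1}$ in $\mathrm{ASym}(\Gamma_n)$ equals $\frac{1-v^{-2n}}{n^2(1-v^{-2})}\,\mathrm{ASym}(\Gamma_{n-1})$ (base case, induction on $n$), and the coefficient of $y_{n+k}^{n+k-1}$ in the numerator of $P_{n,k-1}\star y^{k-1}$ equals $\frac{c}{n+k}v^{-n}\Delta_{n,k-1}$ (induction on $k$); these two recursions simultaneously prove $c\neq 0$ and produce the explicit factors $\prod_{i=1}^{n}\frac{1-v^{-2i}}{1-v^{-2}}$ and $v^{-kn}$. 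Your alternative of ``evaluating at a convenient specialization'' can be made to work but is no easier: one must choose a specialization at which the Vandermonde survives while the antisymmetrized sum collapses, which is the same difficulty in disguise. With such an extreme-coefficient computation supplied, your plan closes; without it, the nontrivial part of the lemma is unproved.
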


\begin{proof}
By definition, under skew-symmetrization, $\prod_{i<j}(x_{i}-x_{j})\prod_{k<l}(y_{k}-y_{l})$ is a factor. And by comparing degrees between the two sides, we know it is the only factor. So we only need to prove $P_{n,k}\neq 0$, i.e., $\text{ASym}(\prod_{i<j}(x_{i}+v^{-1}y_{j})\prod_{i<j}(y_{i}+v^{-1}x_{j})y_{n+2}\cdots y_{n+k}^{k-1})\neq 0$.

First, when $k=0$, let $\Gamma_{n}=\prod_{1\leqslant i<j\leqslant n}(x_{i}+v^{-1}y_{j})(y_{i}+v^{-1}x_{j})$, then the coefficient of $x_{1}^{n-1}y_{1}^{n-1}$ in $\text{ASym}(\Gamma_{n})$ is equal to $\frac{1-v^{-2n}}{n^{2}(1-v^{-2})}\text{ASym}(\Gamma_{n-1}(x_{2},\dots,x_{n},y_{2},\dots,y_{n}))$, now by induction on $n$ we are done. When $k>0$, assume the statement is true for $k-1$, and let $\Delta_{n,k}=v^{-kn}\prod_{i=1}^{n}\frac{1-v^{-2i}}{1-v^{-2}}\prod_{1\leqslant i<j\leqslant n}(x_{i}-x_{j})\prod_{1\leqslant i<j\leqslant n+k}(y_{i}-y_{j})$. Then by the associativity of shuffle product we have $P_{n,k}=P_{n,k-1}\star y^{k-1}=$
\begin{equation}
\frac{c\cdot\Delta_{n,k-1}}{\prod_{1\leqslant i\leqslant n}^{1\leqslant j\leqslant n+k-1}(x_{i}-y_{j})}\star y^{k-1}=\frac{c\cdot\text{ASym}(\Delta_{n,k-1}\prod_{1\leqslant i\leqslant n}(x_{i}+v^{-1}y_{n+k})y_{n+k}^{k-1})}{\prod_{1\leqslant i\leqslant n}^{1\leqslant j\leqslant n+k}(x_{i}-y_{j})},
\end{equation}
then the coefficient of $y_{n+k}^{n+k-1}$ in the numerator part of $P_{n,k}$ is $\frac{c}{n+k}v^{-n}\Delta_{n,k-1}$, hence by induction on $k$ we are done.

For $Q_{n,k}$ the proof is the same.
\end{proof}

\begin{Prop}\label{ideal}
$\Lambda$ is generated by $\{x^{i},y^{j}\}_{i,j\in \mathbb{Z}}$, i.e., $\varphi$ is surjective.
\end{Prop}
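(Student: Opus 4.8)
The plan is to reduce the assertion to a statement about symmetric Laurent polynomials, and then to settle that statement by an induction on the bidegree $(n,m)$ that peels off one variable at a time, using Lemma \ref{elementary} to supply the building blocks.

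First I would record the exact shape of an element of $\Lambda_{n,m}$. Writing $V_x=\prod_{1\leqslant i<j\leqslant n}(x_i-x_j)$ and $V_y=\prod_{1\leqslant t<l\leqslant m}(y_t-y_l)$, skew-symmetry of $F$ together with the symmetry of the denominator $\prod_{i,j}(x_i-y_j)$ forces the numerator $f=F\cdot\prod_{i,j}(x_i-y_j)$ to be skew-symmetric in the $x$'s and in the $y$'s, hence divisible by $V_xV_y$. Thus
\[
F=\Phi_{n,m}\cdot g,\qquad \Phi_{n,m}:=\frac{V_xV_y}{\prod_{1\leqslant i\leqslant n}^{1\leqslant j\leqslant m}(x_i-y_j)},
\]
where $g$ runs exactly over the Laurent polynomials symmetric separately in the $x$'s and in the $y$'s, a space I abbreviate $\Sym_x\otimes\Sym_y$. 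So $\varphi$ surjective is equivalent to $\Phi_{n,m}\cdot g\in\on{Im}\varphi$ for every such $g$ and every $(n,m)$.

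Next I would put enough elements into $\on{Im}\varphi$. Since $\varphi(r_k)=\frac{(1-v^{-2})x^{k+1}}{x-y}$, the element $\tfrac1{x-y}$ lies in $\on{Im}\varphi$, so Lemma \ref{elementary} identifies $\Phi_{n,m}$, up to a nonzero scalar, with $P_{n,m-n}$ when $m\geqslant n$ and with $Q_{m,n-m}$ when $m\leqslant n$; this settles the case $g=1$ for all $(n,m)$. A direct computation of a pure-$x$ times pure-$y$ shuffle product, using that $V_x\,a\,V_y\,b\,\prod_{i,j}(x_i+v^{-1}y_j)$ is already fully skew-symmetric so that $\on{ASym}$ acts trivially on it, gives
\[
(x^{i_1}\star\cdots\star x^{i_n})\star(y^{j_1}\star\cdots\star y^{j_m})=\Phi_{n,m}\cdot\Big(a\,b\prod_{1\leqslant i\leqslant n}^{1\leqslant j\leqslant m}(x_i+v^{-1}y_j)\Big),
\]
with $a\in\Sym_x$, $b\in\Sym_y$ arbitrary. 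These realize $\Phi_{n,m}$ times the subspace $\prod_{i,j}(x_i+v^{-1}y_j)\cdot(\Sym_x\otimes\Sym_y)$.

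The inductive engine is shuffling off a single generator. Using the Vandermonde identity $\on{ASym}_{\mathfrak{S}_n}\!\big(\phi(x_1)V_x(x_2,\dots,x_n)\big)=\tfrac{V_x}{n}\sum_{k}\phi(x_k)/\!\prod_{l\neq k}(x_k-x_l)$, one computes
\[
x^i\star\Phi_{n-1,m}=\Phi_{n,m}\cdot\frac1n\sum_{k=1}^n\frac{x_k^{\,i}\prod_{j=1}^m(x_k+v^{-1}y_j)}{\prod_{l\neq k}(x_k-x_l)},
\]
and analogously for $\star\,y^j$ and for the mixed element $\tfrac{x^{a}}{x-y}\in\Lambda_{1,1}$. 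Each right-hand symmetric function has a controllable top term (a complete homogeneous symmetric function in the $x$'s), so, letting $i$ range over $\BZ$ and combining with the products above and with the inductive hypothesis in bidegrees $(n-1,m)$, $(n,m-1)$ and $(n-1,m-1)$, I would argue by triangularity with respect to the dominance order on the pairs of partitions indexing $m_\lambda(x)m_\mu(y)$ that the $g$'s so produced exhaust $\Sym_x\otimes\Sym_y$. The base of the induction is $\Lambda_{1,0},\Lambda_{0,1}$ (immediate) and $\Lambda_{1,1}$, where one checks by hand that the whole of $\Lambda_{1,1}$ is reached, the non-degeneracy of the relevant coefficient matrix $\bigl(\begin{smallmatrix}1&v^{-1}\\ v^{-1}&1\end{smallmatrix}\bigr)$ being exactly $1-v^{-2}\neq0$.

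The main obstacle is precisely this spanning/triangularity step. The difficulty is the coupling factor $\prod_{i,j}(x_i+v^{-1}y_j)$ that appears in every shuffle product: the naive pure-$x$/pure-$y$ products reach only the ideal it generates, so one must genuinely use the mixed root vectors $r_k$ and organize the peeling-off so that the resulting leading symmetric functions form an upper-triangular, generically invertible system. Establishing this triangularity uniformly in $(n,m)$, rather than merely identifying a single leading term, is where the real work lies.
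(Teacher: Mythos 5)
Your reduction of the statement to showing $\Phi_{n,m}\cdot g\in\on{Im}\varphi$ for every $g\in\Sym_x\otimes\Sym_y$ is correct and agrees with the paper, as does your observation that products of a pure $x$-string with a pure $y$-string only reach the ideal generated by the coupling factor $\prod_{i,j}(x_i+v^{-1}y_j)$. But the step that would actually close the argument --- that peeling off one variable at a time via $x^i\star\Phi_{n-1,m}$ and its analogues yields a triangular, invertible system exhausting $\Sym_x\otimes\Sym_y$ --- is the entire content of the proposition, and it is not established; you say yourself that this ``is where the real work lies.'' It is not a routine verification: the symmetric functions $\frac1n\sum_k x_k^i\prod_j(x_k+v^{-1}y_j)/\prod_{l\neq k}(x_k-x_l)$ genuinely couple the $x$- and $y$-variables, their expansions in the products $m_\lambda(x)m_\mu(y)$ contain many terms of incomparable bidegree, and no ordering is exhibited in which the proposed system is unitriangular with nonzero diagonal. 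As written, the proposal only shows that $\on{Im}\varphi$ contains $\Phi_{n,m}\cdot\bigl(\BC(v)+\prod_{i,j}(x_i+v^{-1}y_j)\cdot(\Sym_x\otimes\Sym_y)\bigr)$ together with an unanalyzed additional span, so there is a genuine gap.

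The paper closes this gap by a direct construction that needs no triangularity. From $x^{a}\star y^{b}+v\,y^{b}\star x^{a}=\frac{(v^{-1}-v)x^{a}y^{b+1}}{x-y}$ one gets that \emph{every} element $\frac{x^{a}y^{b}}{x-y}\in\Lambda_{1,1}$ lies in $\on{Im}\varphi$, not only $\frac{x^{a}}{x-y}$. Given a monomial $f=x_1^{a_1}\cdots x_n^{a_n}y_1^{b_1}\cdots y_m^{b_m}$ with $n\leqslant m=n+k$, for each $\sigma=\pi\times\tau\in\mathfrak{S}_n\times\mathfrak{S}_m$ one forms
\[
F_\sigma=\frac{x^{a_{\pi(1)}}y^{b_{\tau(1)}}}{x-y}\star\cdots\star\frac{x^{a_{\pi(n)}}y^{b_{\tau(n)}}}{x-y}\star y^{b_{\tau(n+1)}}\star y^{b_{\tau(n+2)}+1}\star\cdots\star y^{b_{\tau(n+k)}+k-1}.
\]
All cross-factors in these shuffle products are independent of $\sigma$, so averaging over $\sigma$ pulls the fully symmetric sum of monomials out of the antisymmetrization and yields $\frac{1}{n!m!}\sum_\sigma F_\sigma=\Sym_{\mathfrak{S}_n\times\mathfrak{S}_m}(f)\cdot P_{n,k}$, while Lemma \ref{elementary} says $P_{n,k}$ is a nonzero multiple of $\Phi_{n,m}$ (this is exactly where the genericity of $v$ enters). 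Since the symmetrized monomials span $\Sym_x\otimes\Sym_y$, this gives all of $\Lambda_{n,m}$ in one stroke. If you wish to salvage your own plan, the missing ingredient is precisely these mixed $\Lambda_{1,1}$-blocks used $n$ at a time, rather than one at a time.
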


\begin{proof}
We need to prove for each monomial $f(x_{1},\dots,x_{n},y_{1},\dots,y_{m})=x_{1}^{a_{1}}\cdots x_{n}^{a_{n}}y_{1}^{b_{1}}\cdots y_{m}^{b_{m}}$, $a_{i},b_{j}\in \mathbb{Z}$, $\frac{\prod_{i<j}(x_{i}-x_{j})\prod_{k<l}(y_{k}-y_{l})\text{Sym}_{\mathfrak{S}_{n}\times\mathfrak{S}_{m}}(f)}{\prod(x_{i}-y_{j})}\in \Lambda_{n,m}$, where $\text{Sym}_{\mathfrak{S}_{n}\times\mathfrak{S}_{m}}$ denotes symmetrization with respect to $\mathfrak{S}_{n}\times \mathfrak{S}_{m}$, that is
\begin{equation}
\text{Sym}_{\mathfrak{S}_{n}\times\mathfrak{S}_{m}}(f)=\frac{1}{n!m!}\sum_{\sigma\times\tau\in\mathfrak{S}_{n}\times\mathfrak{S}_{m}}f(x_{\sigma(1)},\dots,x_{\sigma(n)},y_{\tau(1)},\dots,y_{\tau(m)}).
\end{equation}
 We can assume $n\leqslant m$ and $m=n+k$. For each $\sigma=\pi\times\tau\in \mathfrak{S}_{n}\times \mathfrak{S}_{m}$, let $F_{\sigma}\in \Lambda_{n,m}$ be
$$F_{\sigma}=\frac{x^{a_{\pi(1)}}y^{b_{\tau(1)}}}{x-y}\star\cdots\star\frac{x^{a_{\pi(n)}}y^{b_{\tau(n)}}}{x-y}\star y^{b_{\tau(n+1)}}\star y^{b_{\tau(n+2)}+1}\star\cdots\star y^{b_{\tau(n+k)}+k-1}.$$
Then $\frac{1}{n!m!}\sum_{\sigma\in \mathfrak{S}_{n}\times \mathfrak{S}_{m}}F_{\sigma}=$
$$\frac{\text{Sym}_{\mathfrak{S}_{n}\times\mathfrak{S}_{m}}(f)\cdot\text{ASym}(\prod_{i<j}(x_{i}+v^{-1}y_{j})\prod_{i<j}(y_{i}+v^{-1}x_{j})y_{n+2}\cdots y_{n+k}^{k-1})}{\prod(x_{i}-y_{j})}.$$
By Lemma \ref{elementary} we get the desired element.
\end{proof}



Now by Theorem \ref{pbw2} and the surjectivity of $\varphi$, we know $\{\varphi(E_{h})\}_{h\in H}$  span the shuffle algebra $\Lambda$. Thus proving their linearly independence would gives us the linearly independence of $\{E_{h}\}_{h\in H}$ and that $\varphi$ is actually an isomorphism. The following proposition follows from the technique of specialization introduced in \cite{Tsy18}, we will treat this particular case as an example to explain it. Note that this technique of specialization has been frequently used in the studies of shuffle algebras, see \cite{FHHSY09,Neg13}.

\begin{Prop}\label{spe21}
$\{\varphi(E_{h})\}_{h\in H}$ are linearly independent in $\Lambda$.
\end{Prop}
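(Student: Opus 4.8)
The plan is to prove linear independence of the $\{\varphi(E_h)\}$ by constructing, for each $h \in H$, a specialization of the variables that detects $\varphi(E_h)$ while annihilating all monomials that are ``larger'' in a suitable ordering. Concretely, I would first fix the total degree $(n,m)$ and work inside $\Lambda_{n,m}$, so that only those $h$ with $|\lambda_{\alpha_1}| + |\lambda_\gamma| = n$ and $|\lambda_{\alpha_2}| + |\lambda_\gamma| = m$ contribute. The shuffle product formula \eqref{eq:sp} shows that $\varphi(E_h)$, being a shuffle product of the degree-one generators $x^i$, $y^j$ and the factors $\varphi(r_k) = \tfrac{(1-v^{-2})x^{k+1}}{x-y}$, is an alternating sum over the symmetric group of a product whose numerator carries the exponent data of the partitions $\lambda_{\alpha_1}, \lambda_\gamma, \lambda_{\alpha_2}$. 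The key observation, exactly as in \cite[Subsection 3.3]{Tsy18}, is that the factors $\tfrac{x_i + v^{-1}y_j}{x_i - y_j}$ and $\tfrac{y_j + v^{-1}x_i}{y_j - x_i}$ develop simple poles precisely when an $x$-variable is set equal to a $y$-variable, and these poles encode the fermionic root vectors $r_k$ attached to $\gamma = \alpha_1 + \alpha_2$.

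The main step is to define the specialization map itself. For a given $h$, I would pair up the $k_\gamma$ copies of the variable slots coming from $r$-factors: set $x_i = y_i$ (or $x_i = v^{-c} y_i$ with an appropriate power of $v$ to respect the wheel/pole structure) for the first $k_\gamma$ indices, leaving the remaining $x$-variables (those from the $p_{\lambda_{\alpha_1,i}}$) and $y$-variables (those from the $q_{\lambda_{\alpha_2,i}}$) free. Under this specialization, a rational function $F \in \Lambda_{n,m}$ that has no pole along the diagonal $x_i = y_i$ — i.e. one with fewer than $k_\gamma$ ``glued'' pairs — will vanish or specialize to something of strictly lower complexity, while $\varphi(E_h)$ survives with a computable nonzero leading term. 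Here I would invoke Lemma \ref{elementary} (and its $Q$-variant), which gives the closed form of the ``elementary'' shuffle products $P_{n,k}$ and $Q_{n,k}$ and in particular the nonvanishing constant $c \cdot v^{-kn}\prod_{i=1}^n \tfrac{1-v^{-2i}}{1-v^{-2}}$; this constant is what guarantees the leading coefficient of the specialized $\varphi(E_h)$ is nonzero.

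After the specialization isolates the $r$-part, what remains is an independence statement purely among the symmetric/skew-symmetric monomials indexed by the partitions $\lambda_{\alpha_1}$ and $\lambda_{\alpha_2}$ in the free variables. Since distinct $h$ (with the same $k_\gamma$) give distinct pairs of partitions, and skew-symmetrized monomials $\prod(x_i - x_j)\,\mathrm{Sym}(x^{\lambda_{\alpha_1}})$ (respectively in the $y$'s) are linearly independent — this is the standard fact that Schur-type polynomials for distinct partitions are independent — I would conclude independence within each fixed value of $k_\gamma$. To handle varying $k_\gamma$, I would order the $h$ by $k_\gamma$ and argue that the specialization gluing $k_\gamma$ pairs kills every $E_{h'}$ with strictly larger $k_{\gamma'}$ (too many diagonal poles forced to coincide produce a controlled vanishing) and evaluates those with smaller $k_{\gamma'}$ to zero on the diagonal, so the matrix of specialization values is triangular with nonzero diagonal.

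The hard part will be making the specialization ``compatible'' with the pole structure so that the triangularity is genuine rather than merely formal. The factors in \eqref{eq:sp} mix $x$ and $y$ variables in a way that, upon setting $x_i = y_i$, can produce both zeros (from numerators like $x_i + v^{-1}y_j$ specializing) and uncancelled poles (from the denominator $x_i - y_j$); keeping precise track of which cancellations occur and verifying that the leading term of $\varphi(E_h)$ under the chosen specialization is exactly the product of the Lemma \ref{elementary} constant with an independent family of symmetric polynomials is the delicate computation. I would structure this as an induction on $k_\gamma$, peeling off one $r$-factor at a time using the associativity of $\star$ and the explicit form $\varphi(r_k) = \tfrac{(1-v^{-2})x^{k+1}}{x-y}$, reducing each step to an application of Lemma \ref{elementary}.
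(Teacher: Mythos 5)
Your outline follows the paper's own strategy---a family of specialization maps that glue $x$- and $y$-variables in pairs so as to detect the $r_k$-factors, combined with a triangularity argument over the number $k_\gamma$ of glued pairs---but three points are left in a state where the proof does not go through as written. First, the specialization point cannot be the diagonal: setting $x_i=y_i$ lands on the pole of $F=f/\prod(x_i-y_j)$, and the numerator factors of the shuffle product are $x_i+v^{-1}y_j$, which do \emph{not} vanish at $x_i=y_i$. The paper glues via $x_{d_1+j}\mapsto z_{2,j}$, $y_j\mapsto -v z_{2,j}$, applied to the numerator $f$ rather than to $F$, precisely so that every cross factor $x_i+v^{-1}y_j$ between a glued $x$ and its partner $y$ vanishes while $\varphi(r_k)=\frac{(1-v^{-2})x^{k+1}}{x-y}$ survives; the sign, and not merely the power of $v$, is essential, so your parenthetical ``$x_i=v^{-c}y_i$'' does not capture the correct choice.

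Second, your reduction after specialization loses the partition $\lambda_\gamma$. Two functions $h,h'$ with the same $k_\gamma$ and the same $\lambda_{\alpha_1},\lambda_{\alpha_2}$ but different $\lambda_\gamma$ are not separated by ``distinct pairs of partitions'': after gluing, the $r$-part does not collapse to the nonzero constant of Lemma \ref{elementary} (which only treats the case where all $r$-exponents coincide) but to the Hall--Littlewood-type polynomial $\mathrm{Sym}_{\mathfrak{S}_{d_2}}\bigl(z_{2,1}^{b_1+1}\cdots z_{2,d_2}^{b_{d_2}+1}\prod_{i<j}\frac{z_{2,i}-v^2z_{2,j}}{z_{2,i}-z_{2,j}}\bigr)$ in the glued variables, and the linear independence of these for distinct $\lambda_\gamma$ is an indispensable part of the argument (formula \eqref{speresult} in the paper). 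Third, one of your two vanishing claims is backwards: gluing $N$ pairs kills $\varphi(E_{h'})$ when $k_{\gamma'}<N$ (there are then too few $r$-slots to absorb all glued $x$-variables, so every term of the skew-symmetrization retains a vanishing factor $x_i+v^{-1}y_j$), not when $k_{\gamma'}>N$ (for $N=0$ the specialization is a mere relabelling of variables and kills nothing nonzero). Only the first direction is true, and it is the one the filtration $\Lambda_{n,m}=\mathrm{Ker}(\phi_{\underline{d}_0})\supset\cdots\supset\mathrm{Ker}(\phi_{\underline{d}_l})=0$ actually uses; your induction must therefore peel off the monomials with the \emph{largest} $k_\gamma$ first.
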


\begin{proof}
 For any $h\in H$, denote its degree by $\text{deg}(h)=\underline{d}=(d_{1},d_{2},d_{3})\in \mathbb{N}^{3}$ such that $d_{1}=\sum_{k\in \mathbb{Z}} h(\alpha_{1},k)$, $d_{3}=\sum_{k\in \mathbb{Z}} h(\alpha_{2},k)$, $d_{2}=\sum_{k\in \mathbb{Z}} h(\gamma,k)$. If $\text{deg}(h)=(d_{1},d_{2},d_{3})$, we call $\text{gr}(h)=(d_{1}+d_{2},d_{2}+d_{3})\in\mathbb{N}^{2}$ its grading and we have $\varphi(E_{h})\in \Lambda_{\text{gr}(h)}$.  We hope to prove in each graded part $\Lambda_{n,m}$, the elements $\{\varphi(E_{h}),\text{gr}(h)=(n,m)\}$ are linearly independent. For any $h,h'\in H$ such that $\text{gr}(h)=\text{gr}(h')=(n,m)$, we say $\deg{h'}<\deg{h}$ if  $d'_{1}<d_{1}$, and it induces a complete order on the set of degree of functions that has grading $(n,m)$ and we list them as $D_{n,m}=\{\underline{d}_{1}<\cdots <\underline{d}_{l}\}$.  Now for $F\in \Lambda_{n,m}$ and $\underline{d}\in D_{n,m}$, we define the specialization map $\phi_{\underline{d}}\colon \Lambda_{n,m}\rightarrow V_{\underline{d}}=\mathbb{C}(v)[z_{1,1}^{\pm 1},\dots,z_{1,d_{1}}^{\pm 1},z_{2,1}^{\pm 1},\dots,z_{2,d_{2}}^{\pm 1},z_{3,1}^{\pm 1},\dots,z_{3,d_{3}}^{\pm 1}]$. Let $f$ be the numerator part of $F$, then $\phi_{\underline{d}}(F)$ is the corresponding Laurent polynomial by specializing the variables in $f$ as follows:
\begin{equation}
\begin{aligned}
&x_{i}\mapsto z_{1,i},1\leqslant i\leqslant d_{1},\\
&x_{i}\mapsto z_{2,i-d_{1}},d_{1}+1\leqslant i\leqslant n,\\
&y_{j}\mapsto -vz_{2,j},1\leqslant j\leqslant d_{2},\\
&y_{j}\mapsto -vz_{3,j-d_{2}},d_{2}+1\leqslant j\leqslant m.
\end{aligned}
\end{equation}
Since $f$ is skew-symmetric with respect to $\{x_{i}\}_{1\leqslant i\leqslant n}$ and $\{y_{j}\}_{1\leqslant j\leqslant m}$, $\phi_{\underline{d}}(F)$ is skew-symmetric with respect to $\{z_{i,1},\dots,z_{i,d_{i}}\}$ for any $1\leqslant i\leqslant 3$. Since $\underline{d}_{l}=(n,0,m)$, we know $\phi_{\underline{d}_{l}}(F)$ is equal to $f(z_{1,1},\dots,z_{1,n},-vz_{3,1},\dots,-vz_{3,m})$, thus $\phi_{\underline{d}_{l}}(F)\neq 0$ if and only if $F\neq 0$. Also for $2\leqslant k\leqslant l$ if $\underline{d}_{k}=(d_{1},d_{2},d_{3})$, then $\underline{d}_{k-1}=(d_{1}-1,d_{2}+1,d_{3}-1)$. Let $g\in V_{\underline{d}_{k}}$ and define the specialization map $\rho_{k}\colon V_{\underline{d}_{k}}\rightarrow V_{\underline{d}_{k-1}}$ such that $\rho_{k}(g)$ is the corresponding Laurent polynomial by specializing the variables in $g$ as follows:
\begin{equation}
\begin{aligned}
&z_{1,i}\mapsto z_{1,i},1\leqslant i\leqslant d_{1}-1,\\
&z_{1,d_{1}},z_{3,1}\mapsto z_{2,d_{2}+1},\\
&z_{2,i}\mapsto z_{2,i},1\leqslant i\leqslant d_{2},\\
&z_{3,i}\mapsto z_{3,i-1},2\leqslant i\leqslant d_{3}.
\end{aligned}
\end{equation}
Then we have $\phi_{\underline{d}_{k-1}}=\rho_{k}\circ\phi_{\underline{d}_{k}}$. In addition we let $\phi_{\underline{d}_{0}}$ be the zero map. Hence we have a filtration on $\Lambda_{n,m}$:
$$\Lambda_{n,m}=\text{Ker}(\phi_{\underline{d}_{0}})\supset\cdots\supset\text{Ker}(\phi_{\underline{d}_{l-1}})\supset\text{Ker}(\phi_{\underline{d}_{l}})=0.$$
Now for any $h\in H$ and the associated partitions $\lambda_{\alpha_{1}}=(a_{1}<\cdots<a_{d_{1}})$, $\lambda_{\gamma}=(b_{1}\leqslant\cdots\leqslant b_{d_{2}})$, $ \lambda_{\alpha_{2}}=(c_{1}<\cdots<c_{d_{3}})$, $\varphi(E_{h})$ equals to 

\begin{equation}
\begin{aligned}
 &\text{Asym}_{\mathfrak{S}_{n}\times \mathfrak{S}_{m}}\Big(x_{1}^{a_{1}}\cdots x_{d_{1}}^{a_{d_{1}}}\cdot x_{d_{1}+1}^{b_{1}+1}\cdots x_{n}^{b_{d_{2}}+1}\cdot y_{d_{2}+1}^{c_{1}}\cdots y_{m}^{c_{d_{3}}}\prod^{1\leqslant j\leqslant m}_{i\leqslant d_{1}}(x_{i}+v^{-1}y_{j})\\
 &\prod^{j>d_{2}}_{i>d_{1}}(x_{i}+v^{-1}y_{j})\prod_{1\leqslant i<j\leqslant d_{2}}(x_{i+d_{1}}+v^{-1}y_{j})(y_{i}+v^{-1}x_{d_{1}+j})\Big)/\prod(x_{i}-y_{j}),
 \end{aligned}
\end{equation}
it is a sum of terms corresponding to elements of $\mathfrak{S}_{n}\times \mathfrak{S}_{m}$. First let us compute $\phi_{\underline{d}}(\varphi(E_{h}))$. In this case the terms which do not specialize to zero are corresponding to those $\sigma\times \tau\in \mathfrak{S}_{n}\times \mathfrak{S}_{m}$ such that $\sigma=\sigma_{1}\times \pi,\tau=\pi\times \tau_{1}$, where $\sigma_{1}\in \mathfrak{S}_{d_{1}},\pi\in \mathfrak{S}_{d_{2}},\tau_{1}\in \mathfrak{S}_{d_{3}}$. Hence we have $\phi_{\underline{d}}(\varphi(E_{h}))=$
\begin{equation}
\begin{aligned}
&Z\cdot \text{Asym}_{\mathfrak{S}_{d_{1}}}(z_{1,1}^{a_{1}}\cdots z_{1,d_{1}}^{a_{d_{1}}})\cdot \text{Asym}_{\mathfrak{S}_{d_{3}}}( z_{3,1}^{c_{1}}\cdots z_{3,d_{3}}^{c_{d_{3}}})\\
&\cdot \text{Sym}_{\mathfrak{S}_{d_{2}}}(z_{2,1}^{b_{1}+1}\cdots z_{2,d_{2}}^{b_{d_{2}}+1}\prod_{1\leqslant i<j\leqslant d_{2}}\frac{z_{2,i}-v^{2}z_{2,j}}{z_{2,i}-z_{2,j}}),
\label{speresult}
\end{aligned}
\end{equation}
where $Z=c(v)\cdot\prod_{1\leqslant i\leqslant d_{1},1\leqslant j\leqslant d_{2}}^{1\leqslant k\leqslant d_{3}}(z_{1,i}-z_{2,j})(z_{1,i}-z_{3,k})(z_{2,j}-z_{3,k})\prod_{1\leqslant i<j\leqslant d_{2}}(z_{2,i}-z_{2,j})^{2}$ is a common factor for all $h$, and $c(v)=c\cdot v^{l}$ for some $c\neq 0\in\mathbb{C},l\in\mathbb{Z}$. Let $A_{n}$ be the space of skew-symmetric Laurent polynomials of $n$ variables over $\mathbb{C}(v)$. From the above formula we know if we take all $h\in H$ such that $\deg{h}=\underline{d}$, then under $\phi_{\underline{d}}$ the images of $\{\varphi(E_{h})\}$ constitute a basis for $Z\cdot A_{d_{1}}\cdot A_{d_{3}}\cdot S_{d_{2}}$, hence $\{\varphi(E_{h})\}_{\deg{h}=\underline{d}}$ is linearly independent set. It is also clear from the above explicit formula for $\varphi_{\underline{d}}(E_{h})$ that for $\underline{d}'<\deg{h}$, we have $\phi_{\underline{d}'}(\varphi(E_{h}))=0$. Hence for any $\deg{h}=\underline{d}_{k}$ we have $\varphi(E_{h})\subset \text{Ker}(\phi_{\underline{d}_{k-1}})-\text{Ker}(\phi_{\underline{d}_{k}})$ and thus collect them all the set $\{\varphi(E_{h})\}_{h\in D_{n,m}}$ is linearly independent.
\end{proof}

\begin{Rk}\label{remark}\rm
Using the above specialization map $\phi_{\underline{d}}$, we can also give a proof of surjectivity of $\varphi$, following \cite[Lemma 3.19]{Tsy18}. We only need to prove for any $F\in \Lambda_{n,m}$, $F$ is equal to a linear combination of some $\varphi(E_{h})$ such that $h\in H$ and $\text{gr}(h)=(n,m)$. Without loss of generality, we assume $n\leqslant m$. If $\phi_{\underline{d}'}(F)=0$ for any $\underline{d}'<\underline{d}$, then we see $\phi_{\underline{d}}(F)$ has $\prod_{1\leqslant i\leqslant d_{1}}^{1\leqslant k\leqslant d_{3}}(z_{1,i}-z_{3,k})$ as a factor. To see this recall we have $\phi_{\underline{d}_{k-1}}=\rho_{k}\circ\phi_{\underline{d}_{k}}$, hence $\phi_{\underline{d}'}(F)=0$ for any $\underline{d'}<\underline{d}$ shows that $\phi_{\underline{d}}(F)$ has the factor $z_{1,d_{1}}-z_{3,1}$, and since $\phi_{\underline{d}}(F)$ is skew-symmetric with respect to $\{z_{i,j}\}_{1\leqslant i\leqslant 3}^{1\leqslant j\leqslant d_{i}}$, by taking symmetrization we get the desired factor. Moreover since $F$ is skew-symmetric, we see $\phi_{\underline{d}}(F)$ is exactly some linear combination of elements $\phi_{\underline{d}}(\varphi(E_{h}))$. Note that if we choose $\underline{d}=\underline{d}_{1}$, that is if $d_{1}=0$, then $\phi_{\underline{d}_{1}}(F)$ automatically has the factor $\prod_{1\leqslant i<j\leqslant d_{2}}(z_{2,i}-z_{2,j})^{2}\prod_{1\leqslant j\leqslant d_{2}}^{1\leqslant k\leqslant d_{3}}(z_{2,j}-z_{3,k})$. Thus there is some $G_{1}$ which is some linear combinations of $\varphi(E_{h})$ such that $\deg{h}=\underline{d}_{1}$ and $\phi_{\underline{d}_{1}}(F)=\phi_{\underline{d}_{1}}(G_{1})$, that is $\phi_{\underline{d}_{1}}(F-G_{1})=0$. Hence we have $G_{2}$ that is some linear combination of $\varphi(E_{h})$ such that $\deg{h}=\underline{d}_{2}$ and $\phi_{\underline{d}_{2}}(F-G_{1})=\phi_{\underline{d}_{2}}(G_{2})$. Repeat this procedure, we have $G_{1},\dots,G_{l}$ which are all linear combinations of $\varphi(E_{h})$ such that $\phi_{\underline{d}_{l}}(F)=\phi_{\underline{d}_{l}}(G_{1}+\cdots+G_{l})$. Since $\text{Ker}(\phi_{\underline{d}_{l}})=0$, we have $F=G_{1}+\cdots+G_{l}$ and is some linear combination of $\varphi(E_{h})$. Similar arguments can also prove the surjectivity  for $\dto$, once we define the appropriate specialization map, see the proof of Theorem \ref{d21} and Theorem \ref{d212}.
\end{Rk}

\begin{Cor}
$\{E_{h}\}_{h\in H}$ are PBW type bases for $\QAP$.
\end{Cor}

\begin{proof}
By Theorem \ref{pbw2} we know they span the whole $\QAP$, and by Proposition \ref{spe21} we know they are linearly independent.
\end{proof}

\begin{Thm}
$\varphi\colon\QAP\rightarrow \Lambda$ is an isomorphism.
\end{Thm}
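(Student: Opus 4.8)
The plan is to assemble the theorem directly from the three ingredients already in place, since at this point both surjectivity and injectivity are within reach. First I would note that surjectivity of $\varphi$ is exactly the content of Proposition \ref{ideal}: the shuffle algebra $\Lambda$ is generated by the degree-one elements $\{x^{i},y^{j}\}_{i,j\in\mathbb{Z}}$, and these are precisely the images $\varphi(p_{i})$ and $\varphi(q_{j})$. Hence no new argument is required for surjectivity.

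For injectivity, the strategy is to play the spanning property on the source against the linear independence on the target. By Theorem \ref{pbw2}, the ordered monomials $\{E_{h}\}_{h\in H}$ span $\QAP$ over $\mathbb{C}(v)$. Suppose $x\in\QAP$ satisfies $\varphi(x)=0$. Writing $x=\sum_{h}c_{h}E_{h}$ as a finite linear combination and applying $\varphi$ yields $\sum_{h}c_{h}\varphi(E_{h})=0$ in $\Lambda$. By Proposition \ref{spe21} the elements $\{\varphi(E_{h})\}_{h\in H}$ are linearly independent, which forces every $c_{h}=0$, hence $x=0$. This shows the kernel of $\varphi$ is trivial, and combined with surjectivity gives the isomorphism.

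As a secondary benefit, I would record that the same two facts upgrade the spanning set $\{E_{h}\}$ to a genuine PBW basis, as in the preceding Corollary: linear independence of the images $\varphi(E_{h})$ forces linear independence of the $E_{h}$ themselves, so $\varphi$ carries a basis of $\QAP$ onto a basis of $\Lambda$.

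The main obstacle is not located in this final statement at all: the assembly here is essentially formal. The genuine difficulty has already been discharged in the two supporting results, namely the reduction-and-reordering argument of Theorem \ref{pbw2}, which shows the quantum affine root-vector monomials suffice using the commutation relations of Proposition \ref{commutation}, and the specialization argument of Proposition \ref{spe21}, where the filtration by the maps $\phi_{\underline{d}}$ together with the explicit leading-term computation \eqref{speresult} detects linear independence. Once those are in hand, the isomorphism follows immediately, and this theorem serves mainly to record the conclusion.
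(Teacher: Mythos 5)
Your proposal is correct and follows exactly the paper's own route: surjectivity is quoted from Proposition \ref{ideal}, and injectivity is deduced by combining the spanning property of $\{E_{h}\}_{h\in H}$ from Theorem \ref{pbw2} with the linear independence of $\{\varphi(E_{h})\}_{h\in H}$ from Proposition \ref{spe21}. The assembly is indeed formal, and your account matches the paper's proof in substance and emphasis.
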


\begin{proof}
We prove the surjectivity of $\varphi$ in Proposition \ref{ideal}, now since $\{E_{h}\}_{h\in H}$ are bases and \{$\varphi(E_{h})\}$ are linearly independent, $\varphi$ is also injective.
\end{proof}

\subsection{When $v$ is a primitive root of unity}


In this subsection we will let $v\in\mathbb{C}$ be a complex number and study a root of unity version of $\Lambda$. We actually give a generalization and a new proof of results in \cite{FJMMT03}. 

Let $S=\oplus_{k\in\mathbb{N}}S_{k}=\oplus_{k\in\mathbb{N}}\mathbb{C}[x_{1}^{\pm 1},\dots,x_{k}^{\pm 1}]^{\mathfrak{S}_{k}}$ be the graded vector space of symmetric Laurent polynomials over $\mathbb{C}$. Similar to the above when $v$ is a parameter, for $F\in S_{k},G\in S_{l}$, we can define the shuffle product $F\star G\in S_{k+l}$ as
\begin{equation}
F\star G=\text{Sym}_{\mathfrak{S}_{k+l}}\Big(F(\{x_{i}\}_{1\leqslant i\leqslant k}) G(\{x_{j}\}_{k<j\leqslant k+l})\prod_{i\leqslant k}^{j>k}\frac{x_{i}-vx_{j}}{x_{i}-x_{j}}\Big).
\end{equation}
Using this shuffle product $S$ becomes an associative algebra over $\mathbb{C}$. For any partition $\lambda$ of length $n$, the Hall-Littlewood polynomial with $n$ variables is defined as

\begin{equation}
P_{\lambda}(x_{1},\dots,x_{n};v)=\text{Sym}_{\mathfrak{S}_{n}}(x_{1}^{\lambda_{1}}\cdots x_{n}^{\lambda_{n}}\prod_{1\leqslant i<j\leqslant n}\frac{x_{i}-vx_{j}}{x_{i}-x_{j}}).
\end{equation}
It is well known that when $v$ is generic, the Hall-Littlewood polynomials form a $\mathbb{C}$ basis of $S$. By definition of above shuffle product, we have $P_{\lambda}(x_{1},\dots,x_{n};v)=x^{\lambda_{1}}\star\cdots\star x^{\lambda_{n}}$, hence when $v$ is generic, $S$ is generated by $S_{1}$. 

Now let $v$ be a primitive root of unity of order $t\in\mathbb{N}$, and consider the subalgebra $S'$ generated by $S_{1}$ over $\mathbb{C}$. In this case $S'$ is not equal to $S$, and we hope to determine it by certain conditions and construct a basis for it. For any $F\in S$, $F$ is said to be satisfying the wheel condition if $F(x,vx,\dots,v^{t-1}x,x_{t+1},\dots)=0$ for any $x\in \mathbb{C}$. Denote by $S^{w}\subset S$ the subspace consisting of elements satisfying the wheel condition. For a partition $\lambda$ of length $n$, denote by $m_{i}(\lambda)$ the number of $i$ appearing in $\lambda$, we say $\lambda$ is admissible if $m_{i}(\lambda)\leqslant t-1$ for any $i\in \mathbb{Z}$. One main result of \cite{FJMMT03} is

\begin{Thm}[Proposition 3.5,\cite{FJMMT03}]\label{boris}
When $v$ is a primitive root of unity of order $t$, the Hall-Littlewood polynomials $P_{\lambda}$ in which $\lambda$ is admissible form a basis of $S'$ over $\mathbb{C}$. Moreover, $S'=S^{w}$.
\end{Thm}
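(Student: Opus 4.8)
The plan is to prove the two assertions together through the following chain of statements: (I) $S'\subseteq S^{w}$; (II) [the key lemma] the leading term of any element of $S^{w}$ is indexed by an admissible partition; (III) the admissible $P_{\lambda}$ span $S^{w}$; and (IV) the admissible $P_{\lambda}$ are linearly independent. Granting these, the admissible $P_{\lambda}$ form a basis of $S^{w}$; since each $P_{\lambda}=x^{\lambda_{1}}\star\cdots\star x^{\lambda_{n}}$ lies in $S'$ and $S'\subseteq S^{w}$, we obtain $S^{w}=S'$ together with the basis statement at once. Throughout I would grade by number of variables and total degree, writing $S_{n,d}$ for the symmetric Laurent polynomials in $n$ variables of total degree $d$, with monomial basis $\{m_{\lambda}\}$ indexed by weakly decreasing $\lambda\in\mathbb{Z}^{n}$ with $|\lambda|=d$, and ordering indices by the lexicographic order $\preceq$, which refines the dominance order $\leq$.

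For (I) it suffices to show $S^{w}$ is a subalgebra, since $S_{1}\subseteq S^{w}$ trivially (the wheel condition is vacuous on one variable once $t\geq 2$). I would write $F\star G=\mathrm{Sym}_{\mathfrak{S}_{k+l}}(\Phi)$ as a constant times a sum over $(k,l)$-shuffles of $\Phi=F\,G\,\prod_{i\leq k<j}\frac{x_{i}-vx_{j}}{x_{i}-x_{j}}$, and specialize $(x_{1},\dots,x_{t})\mapsto(x,vx,\dots,v^{t-1}x)$. In each shuffled term the $t$ specialized variables split between the $F$-block and the $G$-block; if they all land in one block that factor vanishes by the wheel condition, and otherwise their exponent sets $A,B\subseteq\{0,\dots,t-1\}$ are both nonempty, so running cyclically around $0,1,\dots,t-1,0$ there is a pair $a\in A$, $b\in B$ with $a\equiv b+1\pmod t$; the corresponding cross factor $v^{a}x-v\cdot v^{b}x=0$ kills that term (the matching denominator $v^{a}x-v^{b}x$ being nonzero). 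Hence $F\star G\in S^{w}$. For (IV) I would use that the unsymmetrized sum $R_{\lambda}=n!\,P_{\lambda}$ is a genuine polynomial in $v$ whose dominance-leading term is $v_{\lambda}\,m_{\lambda}$ with $v_{\lambda}=\prod_{i}\prod_{j=1}^{m_{i}(\lambda)}\frac{1-v^{j}}{1-v}$; at a primitive $t$-th root of unity $v_{\lambda}\neq 0$ exactly when $\lambda$ is admissible, so distinct admissible $P_{\lambda}$ in a fixed $S_{n,d}$ have distinct nonvanishing leading monomials $m_{\lambda}$ and are independent.

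The heart of the argument, and the step I expect to be the main obstacle, is the key lemma (II): if $0\neq F=\sum_{\lambda}c_{\lambda}m_{\lambda}\in S^{w}_{n}$ and $\nu$ is the $\preceq$-largest index with $c_{\nu}\neq 0$, then $\nu$ is admissible. Suppose not, and let $a$ be the largest value whose multiplicity in $\nu$ is $\geq t$. I would test the wheel relation $\mathrm{ev}(F)=0$, where $\mathrm{ev}$ substitutes $x_{i}\mapsto v^{i-1}x$ for $1\leq i\leq t$, against the single monomial $M=x^{ta}\prod_{i>t}x_{i}^{\rho_{i}}$ whose tail exponents $\rho$ list the multiset $\nu\setminus\{a^{t}\}$. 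Since $\mathrm{ev}$ collapses only the first $t$ variables, a rearrangement of an index $\lambda$ feeds into $M$ only if its tail realizes $\rho$, which forces the collapsed block to be the complementary multiset; thus for $\lambda=\nu$ the block is exactly $\{a^{t}\}$ and contributes the nonzero phase $v^{a\,t(t-1)/2}$, whereas any other contributing $\lambda$ has a collapsed block of $t$ integers summing to $ta$ but not all equal to $a$, hence a part exceeding $a$, and a short bookkeeping shows such $\lambda$ satisfy $\lambda\succ\nu$. Consequently the coefficient of $M$ in $\mathrm{ev}(F)$ equals $c_{\nu}\,v^{a\,t(t-1)/2}+\sum_{\lambda\succ\nu}c_{\lambda}(\cdots)$, and maximality of $\nu$ annihilates the sum, forcing $c_{\nu}=0$, a contradiction. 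The delicate points are the precise identification of which rearrangements feed into $M$ and the verification that all extraneous contributors are strictly $\succ\nu$; this is exactly where the "more complicated specialization" philosophy is essential, since a naive collapse would merely annihilate a repeated block rather than detect it.

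Finally, (III) follows by leading-term reduction. Given $0\neq F\in S^{w}_{n,d}$, lemma (II) provides an admissible index $\nu$ for its $\preceq$-leading monomial; since $P_{\nu}\in S'\subseteq S^{w}$ has leading coefficient $v_{\nu}/n!\neq 0$, subtracting the appropriate multiple of $P_{\nu}$ yields an element of $S^{w}_{n,d}$ with strictly smaller $\preceq$-leading term. Because $R_{\nu}$ is supported on indices dominance-$\leq\nu$ and every down-set $\{\mu:\mu\leq\nu\}$ is finite, the monomials that can ever occur remain within a fixed finite set, so the reduction terminates at $0$ and writes $F$ as a combination of admissible $P_{\lambda}$. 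Combining (I)–(IV) then gives $S'=S^{w}$ with the admissible $P_{\lambda}$ forming a basis.
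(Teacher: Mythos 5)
Your proof is correct, but it follows a genuinely different route from the one this paper takes. The paper does not reprove Theorem \ref{boris} by leading-term analysis; instead it remarks that its proof of the super analogue (Theorem \ref{wheelcondition}) yields a new proof of Theorem \ref{boris}, and that proof runs through Proposition \ref{toy}: one rewrites the symmetric part of a wheel-condition element in terms of elementary symmetric polynomials $\chi_{1},\dots,\chi_{t}$, observes that the wheel condition is equivalent to membership in the explicit ideal generated by $\chi_{1},\dots,\chi_{t-1}$ (and its analogues in more variables), realizes each ideal generator as an explicit shuffle product of degree-one elements, and then inducts on the number of variables; linear independence of the admissible basis is obtained from the specialization map $\phi_{\underline{d}}$ rather than from triangularity. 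Your argument is instead the classical Hall--Littlewood one (close in spirit to the original [FJMMT03] proof): triangularity of $P_{\lambda}$ against the monomial basis, the observation that the leading coefficient $v_{\lambda}=\prod_{i}\prod_{j=1}^{m_{i}(\lambda)}\frac{1-v^{j}}{1-v}$ vanishes at a primitive $t$-th root of unity exactly for inadmissible $\lambda$, and the key lemma that the lex-leading index of a wheel-condition element is admissible, proved by extracting the coefficient of the test monomial $x^{ta}\prod_{i>t}x_{i}^{\rho_{i}}$ after the substitution $x_{i}\mapsto v^{i-1}x$. I checked the delicate points: the cyclic-transition argument for closure of $S^{w}$ under $\star$, the identification of which rearrangements feed into $M$ (the tail forces the collapsed block to be the complementary multiset, which for $\nu$ is exactly $\{a^{t}\}$), and the verification that any other contributor $\lambda=\rho\cup H$ with $\max H>a$ is lex-greater than $\nu$; all are sound, as is the termination of the reduction in (III) via finiteness of dominance down-sets in fixed degree. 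The trade-off: your approach is more elementary and self-contained for the purely symmetric case and produces explicit transition coefficients, while the paper's ideal-theoretic method is designed to transfer to the mixed symmetric/skew-symmetric shuffle algebra $\overline{\Lambda}$, where a clean Hall--Littlewood triangularity is not readily available.
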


Now let $\overline{\Lambda}=\bigoplus_{n,m\in \mathbb{N}}\overline{\Lambda}_{n,m}$ be graded vector spaces over $\mathbb{C}$, where $\overline{\Lambda}_{n,m}$ consists of rational functions $F$ in the variables $\{x_{1},\dots,x_{n},y_{1},\dots,y_{m}\}$ satisfying the same conditions in $\Lambda$. Let $v$ be a primitive root of unity of order $2t$, then \eqref{eq:sp} defines an associative algebra structure on $\overline{\Lambda}$ over $\mathbb{C}$. Let $\Lambda^{\zeta}$ be the subalgebra generated by $\overline{\Lambda}_{1}=\overline{\Lambda}_{1,0}\oplus\overline{\Lambda}_{0,1}$. As in the $\QAP$ case, for any $h\in H$, define $F_{h}\in \overline{\Lambda}$ as
\begin{equation}
x^{\lambda_{\alpha_{1},1}}\star\cdots \star x^{\lambda_{\alpha_{1},k_{\alpha_{1}}}}\star\frac{x^{\lambda_{\gamma,1}}}{x-y}\star\cdots\star \frac{x^{\lambda_{\gamma,k_{\gamma}}}}{x-y}\star y^{\lambda_{\alpha_{2},1}}\star\cdots \star y^{\lambda_{\alpha_{2},k_{\alpha_{2}}}}.
\end{equation}
We say $h\in H$ is admissible if $m_{i}(\lambda_{\gamma})\leqslant t-1$ for any $i\in\mathbb{Z}$ and we denote by $H^{a}$ the set of admissible functions in $H$. Then we have

\begin{Prop}
$\{F_{h}\}_{h\in H^{a}}$ form a $\mathbb{C}$-basis of $\Lambda^{\zeta}$.
\end{Prop}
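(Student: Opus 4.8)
The plan is to establish the two halves of the basis statement: linear independence of $\{F_h\}_{h\in H^a}$, and that these admissible monomials already span $\Lambda^\zeta$. For independence I would run the specialization argument of Proposition \ref{spe21} without change, since the maps $\phi_{\underline d}$ and $\rho_k$ are variable substitutions and make sense over $\mathbb{C}$ at a root of unity. The computation \eqref{speresult}, repeated for $F_h$ (whose $\gamma$-factors are $\tfrac{x^{\lambda_{\gamma,i}}}{x-y}$, so the $z_2$-exponents are $\lambda_{\gamma,i}$ rather than the $\lambda_{\gamma,i}+1$ produced by $\varphi(r_k)$), gives $\phi_{\deg h}(F_h)$ as a nonzero common factor times $\text{Asym}_{\mathfrak{S}_{d_1}}$ and $\text{Asym}_{\mathfrak{S}_{d_3}}$ of the $z_1$- and $z_3$-monomials, times the factor $\text{Sym}_{\mathfrak{S}_{d_2}}\big(z_{2,1}^{\lambda_{\gamma,1}}\cdots z_{2,d_2}^{\lambda_{\gamma,d_2}}\prod_{i<j}\frac{z_{2,i}-v^{2}z_{2,j}}{z_{2,i}-z_{2,j}}\big)$, which is exactly the Hall--Littlewood polynomial $P_{\lambda_\gamma}(z_2;v^{2})$ with parameter $v^{2}$. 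As $v$ has order $2t$, the parameter $v^{2}$ has order $t$, and $h\in H^a$ means precisely that $\lambda_\gamma$ is admissible; Theorem \ref{boris} then gives the linear independence of these $P_{\lambda_\gamma}(z_2;v^{2})$. Combined with the obvious independence of the antisymmetrized $z_1$- and $z_3$-monomials, $\{\phi_{\underline d}(F_h):\deg h=\underline d,\ h\in H^a\}$ is independent for each fixed $\underline d$, and the filtration by $\text{Ker}(\phi_{\underline d})$ propagates this to the whole admissible family as in Proposition \ref{spe21}.

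For spanning I would first check that $\{F_h\}_{h\in H}$ spans $\Lambda^\zeta$: this subalgebra is generated by $\overline\Lambda_1$, and every relation used in Theorem \ref{pbw2} and Proposition \ref{commutation} follows from \eqref{eq:relation21} with Laurent-polynomial coefficients in $v$, so these identities persist in $\overline\Lambda$ at a root of unity (here $1-v^{-2}\neq 0$, so $\tfrac{x^{k+1}}{x-y}$ is still a nonzero multiple of the image of $r_k$ and lies in $\Lambda^\zeta$); hence the reordering of Theorem \ref{pbw2} expresses any shuffle product of degree-one generators as a $\mathbb{C}$-combination of the $F_h$. The decisive input is then the vanishing of the $t$-fold pure power: since $\prod_{i=1}^{t}x_i^{a}$ is symmetric in the $x$-variables it factors through the skew-symmetrizer, whence
\[
\underbrace{\frac{x^{a}}{x-y}\star\cdots\star\frac{x^{a}}{x-y}}_{t}
=\Big(\prod_{i=1}^{t}x_{i}^{a}\Big)\,
\underbrace{\frac{1}{x-y}\star\cdots\star\frac{1}{x-y}}_{t}
=\Big(\prod_{i=1}^{t}x_{i}^{a}\Big)P_{t,0}.
\]
By Lemma \ref{elementary} the scalar attached to $P_{t,0}$ is $\prod_{i=1}^{t}\frac{1-v^{-2i}}{1-v^{-2}}$, and its $i=t$ factor $1-v^{-2t}=1-(v^{-2})^{t}$ vanishes because $v^{-2}$ has order $t$; thus $P_{t,0}=0$ and the product is zero. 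Now if $h\notin H^a$ then $m_a(\lambda_\gamma)\geq t$ for some $a$, and as $\lambda_\gamma$ is weakly increasing these at least $t$ equal $\gamma$-factors sit adjacently in $F_h$; associativity isolates $t$ of them into the vanishing product, so $F_h=0$. Hence the non-admissible monomials drop out, $\{F_h\}_{h\in H^a}$ spans $\Lambda^\zeta$, and with the independence above this finishes the proof.

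The step I expect to demand the most care is this reduction identity: one must justify extracting the symmetric factor $\prod_i x_i^{a}$ from the skew-symmetrization (valid since $\text{Asym}$ is $\mathbb{C}[x_1,\dots,x_t]^{\mathfrak{S}_t}$-linear) and confirm that the explicit scalar of Lemma \ref{elementary} still holds at the root of unity, so that the single factor $1-v^{-2t}$ annihilates the entire $t$-fold product and, via adjacency of equal parts, every non-admissible $F_h$.
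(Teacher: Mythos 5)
Your argument is correct and follows essentially the same route as the paper: the vanishing of the $t$-fold product $\frac{x^{a}}{x-y}\star\cdots\star\frac{x^{a}}{x-y}$ via the scalar $\prod_{i=1}^{t}\frac{1-v^{-2i}}{1-v^{-2}}$ from Lemma \ref{elementary} to kill non-admissible $F_{h}$, the reordering of Theorem \ref{pbw2} (which persists at a root of unity) for spanning, and the specialization maps $\phi_{\underline{d}}$ of Proposition \ref{spe21} together with \eqref{speresult} for linear independence. You simply make explicit two points the paper leaves implicit, namely that the $z_{2}$-factor in the specialization is a Hall--Littlewood polynomial at parameter $v^{2}$ of order $t$, and that extracting $\prod_{i}x_{i}^{a}$ through the skew-symmetrizer is legitimate because it is a symmetric factor.
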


\begin{proof}
From the proof of Lemma \ref{elementary}, we know for any $k\in \mathbb{Z}$
$$\underbrace{\frac{x^{k}}{x-y}\star \cdots\star \frac{x^{k}}{x-y}}_{m}=0$$ 
if and only if $m\geqslant t$. Hence $F_{h}=0$ if $h\notin H^{a}$. The specialization map $\phi_{\underline{d}}$ defined in Proposition \ref{spe21} still applies when $v$ is a complex number, thus by \eqref{speresult} we know $\{F_{h}\}_{h\in H^{a}}$ are linearly independent over $\mathbb{C}$. Since $\Lambda^{\zeta}$ is generated by $\{x^{i},y^{i}\}_{i\in\mathbb{Z}}$, and they satisfy the same relations \eqref{eq:relation21} as $\{p^{i},q^{i}\}_{i\in\mathbb{Z}}$, thus Theorem \ref{pbw2} shows $\{F_{h}\}_{h\in H^{a}}$ is also a spanning set for $\Lambda^{\zeta}$ over $\mathbb{C}$.
\end{proof}

Similar to \cite{FJMMT03}, we prove that $\Lambda^{\zeta}$ is also governed by certain wheel condition.

\begin{Def}
When $v$ is a primitive root of unity of order $2t$, $F\in \overline{\Lambda}$ is said to satisfy the wheel condition if 
\begin{equation}
F(x_{1},\dots,x_{n},y_{1},\dots,y_{m})=0 \text{ once } \frac{x_{1}}{y_{1}}=\frac{y_{1}}{x_{2}}=\frac{x_{2}}{y_{2}}=\cdots=\frac{x_{t}}{y_{t}}=\frac{y_{t}}{x_{1}}=-v^{-1}.\label{wheelunity}
\end{equation}
\end{Def}

We denote the subspace of elements in $\overline{\Lambda}$ satisfying the wheel condition by $\Lambda^{w}$.
\begin{Prop}\label{wheel}
$\Lambda^{w}$ is a subalgebra of $\overline{\Lambda}$ under shuffle product and $\Lambda^{\zeta}\subset\Lambda^{w}$.
\end{Prop}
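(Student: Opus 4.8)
The plan is to prove the two assertions separately. First I would show $\Lambda^{w}$ is closed under the shuffle product $\star$. Take $F\in \Lambda^{w}\cap \overline{\Lambda}_{k_{1},l_{1}}$ and $G\in \Lambda^{w}\cap \overline{\Lambda}_{k_{2},l_{2}}$, and I must verify that $F\star G$ vanishes whenever its variables are specialized along a ``wheel'' as in \eqref{wheelunity}, i.e. when some $t$ of the $x$-variables and $t$ of the $y$-variables are set to a geometric progression with ratio $-v^{-1}$ in the cyclic pattern $x_{i}/y_{i}=y_{i}/x_{i+1}=-v^{-1}$. By definition $F\star G$ is a skew-symmetrization over $\mathfrak{S}_{k_{1}+k_{2}}\times\mathfrak{S}_{l_{1}+l_{2}}$ of a product of $F$, $G$, and the cross terms $\prod\frac{x_{i}+v^{-1}y_{j}}{x_{i}-y_{j}}$, $\prod\frac{y_{j}+v^{-1}x_{i}}{y_{j}-x_{i}}$. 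The strategy is the standard one for such arguments: after the specialization, group the terms of the skew-symmetrization by how the $2t$ wheel-variables are distributed between the $F$-block and the $G$-block, and show that in each such distribution the corresponding summand is forced to vanish.

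The key mechanism is the following. Since the wheel ratios equal $-v^{-1}$, the factor $x_{i}+v^{-1}y_{j}$ in the cross term vanishes precisely when $x_{i}=-v^{-1}y_{j}$, which happens exactly at the consecutive wheel relation $x_{i}/y_{j}=-v^{-1}$; symmetrically for the $y_{j}+v^{-1}x_{i}$ factors. Thus any summand in which a wheel pair straddles the two blocks (one variable going to $F$, the adjacent one to $G$) acquires a vanishing numerator factor from the cross terms and dies. On the other hand, if all $t$ of the wheel $x$'s and all $t$ of the wheel $y$'s land in a single block, then that block-factor ($F$ or $G$) is being evaluated on a full length-$t$ wheel and vanishes by the wheel condition for $F$ (resp. $G$). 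The only remaining possibility is a distribution where a block receives a proper nonempty subset of the wheel not closed under the adjacency pairing, and here one checks that some consecutive wheel pair must again be split across the blocks, so the cross-term argument applies. Assembling these cases shows every summand vanishes, hence $F\star G\in\Lambda^{w}$, and closure under $\star$ follows.

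For the inclusion $\Lambda^{\zeta}\subset\Lambda^{w}$: since $\Lambda^{\zeta}$ is generated by $\overline{\Lambda}_{1}=\overline{\Lambda}_{1,0}\oplus\overline{\Lambda}_{0,1}$ and $\Lambda^{w}$ is now known to be a subalgebra, it suffices to check that the degree-one generators $x^{i}$ and $y^{j}$ lie in $\Lambda^{w}$. But each $x^{i}$ (resp. $y^{j}$) involves only a single $x$-variable (resp. $y$-variable) and no denominator, so the wheel condition \eqref{wheelunity}, which requires $t\geqslant 1$ variables of each type, is vacuously satisfied in the relevant degrees. Therefore the generators lie in $\Lambda^{w}$, and by subalgebra closure so does all of $\Lambda^{\zeta}$.

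I expect the main obstacle to be the bookkeeping in the closure argument: one must carefully enumerate the ways the $2t$ wheel-variables can be split between the $F$- and $G$-blocks under the skew-symmetrization, and verify that \emph{every} such configuration yields a vanishing factor — either from a split consecutive wheel pair hitting a cross-term numerator $x_{i}+v^{-1}y_{j}$, or from a complete sub-wheel landing in one block and triggering that block's own wheel condition. Handling the ``partial wheel in one block'' configurations, where the vanishing comes from a forced split of some adjacent pair rather than directly from $F$ or $G$, is the delicate point and is where the precise cyclic structure of \eqref{wheelunity} must be used.
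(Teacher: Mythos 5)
Your proposal is correct and follows essentially the same route as the paper's proof: each summand of the skew-symmetrization is killed either by a cross-term numerator when the wheel is split between the $F$- and $G$-blocks, or by the wheel condition of $F$ or $G$ when the whole wheel lands in one block, and the inclusion $\Lambda^{\zeta}\subset\Lambda^{w}$ then follows from the generators being vacuously in $\Lambda^{w}$. One caveat on the step you yourself flag as delicate: only one orientation of a straddled adjacent pair (the earlier variable in the cyclic order $x_{1},y_{1},x_{2},\dots$ lying in the $F$-block and the later one in the $G$-block) yields a vanishing numerator $x+v^{-1}y$ resp.\ $y+v^{-1}x$, and it is precisely the cyclicity of the wheel that forces such an orientation to occur whenever the wheel is split --- this is the bookkeeping the paper records via its conditions $\text{sgn}(\sigma^{-1}(i))=1,\ \text{sgn}(\tau^{-1}(i))=2$ or $\text{sgn}(\sigma^{-1}(i+1))=2,\ \text{sgn}(\tau^{-1}(i))=1$.
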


\begin{proof}
Let $F,G\in \Lambda^{w}$, we shall prove that each term of \eqref{eq:sp} is zero under the specialization $\frac{x_{1}}{y_{1}}=\frac{y_{1}}{x_{2}}=\cdots=\frac{y_{t}}{x_{1}}=-v^{-1}$. Note that each term is corresponding to a permutation $\sigma\times \tau$. In the following for $\sigma^{-1}(t+1)$ we mean $\sigma^{-1}(1)$. For $1\leqslant i\leqslant k_{1}+k_{2}$, we denote $\text{sgn}(i)=1$ if $1\leqslant i\leqslant k_{1}$ and $\text{sgn}(i)=2$ otherwise. Define similarly for $1\leqslant j\leqslant l_{1}+l_{2}$. Now let $\sigma\in \mathfrak{S}_{k_{1}+k_{2}}, \tau\in \mathfrak{S}_{l_{1}+l_{2}}$, we see if there is some $1\leqslant i\leqslant t$ such that $\text{sgn}(\sigma^{-1}(i))=1,\text{sgn}(\tau^{-1}(i))=2$ or $\text{sgn}(\sigma^{-1}(i+1))=2,\text{sgn}(\tau^{-1}(i))=1$, then this term is specialized to zero. Otherwise it must happen that $\text{sgn}(\sigma^{-1}(i))=\text{sgn}(\tau^{-1}(i))$ for all $1\leqslant i\leqslant t$, then since $F,G\in \Lambda^{w}$ this term is also specialized to zero. Since for $f\in \overline{\Lambda}_{1}$ the wheel condition becomes nothing, we know $\overline{\Lambda}_{1}\subset \Lambda^{w}$. Since $\Lambda^{w}$ is an algebra, we know $\Lambda^{\zeta}\subset \Lambda^{w}$.
\end{proof}

Certainly, if $F\in \Lambda^{w}_{n,m}$ and $\text{min}\{n,m\}<t$, then $F\in \Lambda^{\zeta}$. Slightly further, we have

\begin{Prop}\label{toy}
If $F\in \Lambda^{w}_{t,t}$, then $F\in \Lambda^{\zeta}$.
\end{Prop}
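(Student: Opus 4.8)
The plan is to adapt the filtration-and-peeling argument of Remark \ref{remark} to the present root-of-unity setting, the only genuinely new ingredient being Theorem \ref{boris}. In bidegree $(t,t)$ the relevant degrees are $\underline{d}_k=(k-1,\,t-(k-1),\,k-1)$, ordered by $d_1$ so that $\underline{d}_1=(0,t,0)<\underline{d}_2=(1,t-1,1)<\cdots<\underline{d}_{t+1}=(t,0,t)$, and we have the filtration $\Lambda^w_{t,t}\subset \overline{\Lambda}_{t,t}=\mathrm{Ker}(\phi_{\underline{d}_0})\supset\cdots\supset\mathrm{Ker}(\phi_{\underline{d}_{t+1}})=0$. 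Starting from $F\in\Lambda^w_{t,t}$, I would build inductively elements $G_k\in\Lambda^\zeta$, each a $\mathbb{C}$-combination of $F_h$ with $\deg h=\underline{d}_k$ and $h\in H^a$, such that $\phi_{\underline{d}_k}(F-G_1-\cdots-G_k)=0$. Since $\mathrm{Ker}(\phi_{\underline{d}_{t+1}})=0$, this yields $F=\sum_k G_k\in\Lambda^\zeta$.

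The heart of the matter is the bottom level $\underline{d}_1=(0,t,0)$, the only place where the hypothesis $F\in\Lambda^w$ enters. Here $\phi_{\underline{d}_1}$ specializes the numerator of $F$ by $x_i\mapsto z_{2,i}$, $y_i\mapsto -v z_{2,i}$; since the numerator is skew-symmetric in the $x$'s and in the $y$'s, $\phi_{\underline{d}_1}(F)$ is divisible by $\prod_{1\leqslant i<j\leqslant t}(z_{2,i}-z_{2,j})^2$ and the quotient $\widetilde{F}$ is a symmetric Laurent polynomial in $z_{2,1},\dots,z_{2,t}$. The crucial observation is that the substitution $x_i\mapsto z_{2,i},\,y_i\mapsto -v z_{2,i}$ already encodes the relations $x_s/y_s=-v^{-1}$ of \eqref{wheelunity}, so that imposing in addition $z_{2,s}\mapsto v^{2(s-1)}w$ reproduces exactly the wheel locus of \eqref{wheelunity}. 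Consequently $F\in\Lambda^w$ forces $\widetilde{F}(w,v^2w,\dots,v^{2(t-1)}w)=0$, i.e.\ $\widetilde{F}$ satisfies the wheel condition of \cite{FJMMT03} for the parameter $v^2$, which is a primitive $t$-th root of unity. By Theorem \ref{boris}, $\widetilde{F}$ lies in $S^w_t=S'_t$ and is a $\mathbb{C}$-combination of admissible Hall--Littlewood polynomials $P_\lambda(z_2;v^2)$. Comparing with the explicit formula \eqref{speresult}, these are precisely the images $\phi_{\underline{d}_1}(F_h)$ with $h\in H^a$ and $\deg h=\underline{d}_1$, which produces the desired $G_1$ with $\phi_{\underline{d}_1}(F)=\phi_{\underline{d}_1}(G_1)$.

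For the remaining levels $k\geqslant 2$ one has $d_2=t-(k-1)\leqslant t-1$, so every $h$ with $\deg h=\underline{d}_k$ is automatically admissible and the wheel condition plays no further role. Setting $F^{(k-1)}=F-G_1-\cdots-G_{k-1}\in\mathrm{Ker}(\phi_{\underline{d}_{k-1}})$ and using $\phi_{\underline{d}_{k-1}}=\rho_k\circ\phi_{\underline{d}_k}$, the vanishing $\phi_{\underline{d}_{k-1}}(F^{(k-1)})=0$ shows that $\phi_{\underline{d}_k}(F^{(k-1)})$ is divisible by $\prod_{i,l}(z_{1,i}-z_{3,l})$; together with skew-symmetry in the $z_1$'s and $z_3$'s this is exactly what is needed to expand $\phi_{\underline{d}_k}(F^{(k-1)})$ in the basis $\{\phi_{\underline{d}_k}(F_h)\}_{\deg h=\underline{d}_k}$, because for $d_2\leqslant t-1$ the Hall--Littlewood polynomials $P_\lambda(z_2;v^2)$ of length $d_2$ still span all symmetric Laurent polynomials in $d_2$ variables (again Theorem \ref{boris}, the wheel condition being vacuous for fewer than $t$ variables). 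This is the verbatim argument of Remark \ref{remark} and produces $G_k$, completing the induction.

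The main obstacle is the level-$\underline{d}_1$ step, and specifically the translation between the two wheel conditions. The delicate point is that for odd $t$ the denominator $\prod_{i,j}(x_i-y_j)$ itself vanishes on the locus \eqref{wheelunity}, so that ``$F=0$ on the wheel'' must be read at the level of the specialized numerator $\phi_{\underline{d}_1}(F)$ rather than of the rational function $F$; checking that this reading is the correct one, and that it is consistent with the closure statement $\Lambda^\zeta\subset\Lambda^w$ of Proposition \ref{wheel}, is where the argument requires the most care. Once the identification $\widetilde{F}\in S^w_t$ is secured, the reduction to \cite{FJMMT03} is immediate and the rest is the bookkeeping of Remark \ref{remark}.
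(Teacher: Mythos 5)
Your proof is correct, but it takes a genuinely different route from the paper's. The paper does not use the specialization filtration here at all: it writes the numerator of $F$ as $\prod(x_{i}-x_{j})\prod(y_{k}-y_{l})\cdot g$ with $g$ doubly symmetric, notes that on the wheel locus the elementary symmetric functions satisfy $\chi_{1}=\cdots=\chi_{t-1}=\psi_{1}=\cdots=\psi_{t-1}=0$ and $\chi_{t}+(-1)^{t}\psi_{t}=0$ (because $v^{2}$ is a primitive $t$-th root of unity), so that the wheel condition is exactly membership of $g$ in the ideal generated by $\{\chi_{1},\dots,\chi_{t-1},\psi_{1},\dots,\psi_{t-1},\chi_{t}+(-1)^{t}\psi_{t}\}$; it then realizes each generator, times $\Delta_{t,0}/\prod(x_{i}-y_{j})$, as an explicit shuffle product of degree-one elements and concludes with the module property from the proof of Proposition \ref{ideal} (multiplication by a doubly symmetric Laurent polynomial preserves $\Lambda^{\zeta}$). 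You instead run the peeling argument of Remark \ref{remark} and isolate the only step where the wheel condition matters, the bottom degree $(0,t,0)$, where the specialization $x_{i}\mapsto z_{2,i}$, $y_{i}\mapsto -vz_{2,i}$ converts \eqref{wheelunity} into the one-variable-family wheel condition of \cite{FJMMT03} at parameter $v^{2}$, and you then invoke Theorem \ref{boris} as a black box; the higher levels need only that all partitions of length $\leqslant t-1$ are admissible, which is automatic. Both arguments are sound. Yours buys uniformity: it reuses the machinery of Proposition \ref{spe21} and extends with essentially no change to the general bidegrees of Theorem \ref{wheelcondition}. What it gives up is self-containedness: the paper's ideal-theoretic computation avoids Theorem \ref{boris} entirely, which is precisely what allows the authors to claim afterwards that their method yields a \emph{new proof} of that theorem, a claim your reduction cannot support. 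Your caveat about odd $t$ is legitimate but is resolved exactly as you suggest (and as the paper does implicitly): the wheel condition is read on the symmetric factor $g$ of the numerator, which is what both proofs actually manipulate.
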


\begin{proof}
If $t=1$ it is trivial, hence we assume $t\geqslant 2$. Since each $F\in\Lambda^{w}$ is of the form $\frac{f}{\prod(x_{i}-y_{j})}$, where $f=\prod(x_{i}-x_{j})\prod(y_{k}-y_{l})\cdot g$ and $g$ is a symmetric polynomial with respect to $\{x_{i}\}$ and $\{y_{j}\}$, we know $F$ satisfies the wheel condition if and only if $g$ satisfies the wheel condition. Let $\{\chi_{1},\dots,\chi_{t}\},\{\psi_{1},\dots,\psi_{t}\}$ be separately the elementary symmetric polynomials of $\{x_{1},\dots,x_{t}\}$ and $\{y_{1},\dots,y_{t}\}$, then $g=G(\chi_{1},\dots,\chi_{t},\psi_{1},\dots,\psi_{t})$ for some polynomial $G$. The wheel condition says $g(x,\dots,v^{2t-2}x,-vx,\dots,-v^{2t-1}x)=0$, it is equivalent to $G(0,\dots,0,(-1)^{t-1}x^{t},0,\dots,0,x^{t})=0$ for any $x\in \mathbb{C}$. Hence $g$ satisfies the wheel condition if and only if $g$ belongs to the ideal generated by $\{\chi_{1},\dots,\chi_{t-1},\psi_{1},\dots,\psi_{t-1},\chi_{t}+(-1)^{t}\psi_{t}\}$. Now it is easy to check that for $1\leqslant r<t$,
$$\underbrace{\frac{x}{x-y}\star \cdots\star \frac{x}{x-y}}_{r}\star\underbrace{\frac{1}{x-y}\star \cdots\star \frac{1}{x-y}}_{t-r}=\frac{c(v)\cdot\chi_{r}\cdot\Delta_{t,0}}{\prod(x_{i}-y_{j})},$$
$$\underbrace{\frac{1}{x-y}\star \cdots\star \frac{1}{x-y}}_{t-r}\star\underbrace{\frac{y}{x-y}\star \cdots\star \frac{y}{x-y}}_{r}=\frac{c(v)\cdot\psi_{r}\cdot\Delta_{t,0}}{\prod(x_{i}-y_{j})},$$
$$\underbrace{\frac{x+y}{x-y}\star \cdots\star \frac{x+y}{x-y}}_{t-2}\star\frac{x}{x-y}\star\frac{y}{x-y}=\frac{c(v)\cdot(\chi_{t}+(-1)^{t}\psi_{t}+L)\cdot\Delta_{t,0}}{\prod(x_{i}-y_{j})},$$
where $L$ belongs to the ideal generated by $\{\chi_{1},\dots,\chi_{t-1},\psi_{1},\dots,\psi_{t-1}\}$. Now by the proof of Proposition \ref{ideal}, we know if $F\in \Lambda^{\zeta}_{n,m}$, then for any symmetric Laurent polynomial $G\in S_{n,m}$ we have $G\cdot F\in \Lambda^{\zeta}$. Hence the above elements also generate an ideal and it equals to $\Lambda^{w}_{t,t}$.
\end{proof}

Viewing Proposition \ref{toy} as a toy model and starting point of induction, we can now prove the general case.

\begin{Thm}\label{wheelcondition}
For any $f\in \overline{\Lambda}$, $f\in\Lambda^{\zeta}$ if and only if $f$ satisfies the wheel condition \eqref{wheelunity}.
\end{Thm}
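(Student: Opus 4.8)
Since the inclusion $\Lambda^\zeta\subseteq\Lambda^w$ is Proposition \ref{wheel}, the substance of the theorem is the reverse inclusion $\Lambda^w\subseteq\Lambda^\zeta$, and I would prove $\Lambda^w_{n,m}\subseteq\Lambda^\zeta$ for every grading $(n,m)$ by descending the specialization filtration of Proposition \ref{spe21}, exactly as surjectivity was deduced in Remark \ref{remark}. Fix $(n,m)$ with $n\leqslant m$, list the degrees as $\underline{d}_1<\cdots<\underline{d}_l$ with $\underline{d}_l=(n,0,m)$, and recall $\text{Ker}(\phi_{\underline{d}_l})=0$. The whole statement then reduces to the claim that whenever $F\in\Lambda^w_{n,m}$ satisfies $\phi_{\underline{d}'}(F)=0$ for all $\underline{d}'<\underline{d}_k$, the function $\phi_{\underline{d}_k}(F)$ lies in the span of $\{\phi_{\underline{d}_k}(F_h):\deg h=\underline{d}_k,\ h\in H^a\}$. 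Granting this, the subtraction procedure of Remark \ref{remark} applies verbatim: one successively removes elements $G_k\in\Lambda^\zeta$ of degree $\underline{d}_k$ matching $F$ under $\phi_{\underline{d}_k}$, using that $\phi_{\underline{d}'}(F_h)=0$ for $\underline{d}'<\deg h$ to leave the earlier specializations untouched, and concludes $F=\sum_k G_k\in\Lambda^\zeta$ after reaching $\underline{d}_l$.

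To prove the claim I would first factor out the common divisor $Z$ appearing in \eqref{speresult}. Writing $F=f/\prod(x_i-y_j)$ with $f$ skew-symmetric in the $x_i$ and in the $y_j$, and recalling that $\phi_{\underline{d}_k}$ sends $x_{d_1+j},y_j\mapsto z_{2,j},-vz_{2,j}$ and $x_i,y_{d_2+k}\mapsto z_{1,i},-vz_{3,k}$, the equalities $z_{1,i}=z_{2,j}$, $z_{2,j}=z_{3,k}$ and $z_{2,i}=z_{2,j}$ each force a coincidence of two $x$'s or two $y$'s; hence $\phi_{\underline{d}_k}(f)$ is automatically divisible by $\prod(z_{1,i}-z_{2,j})\prod(z_{2,j}-z_{3,k})\prod_{i<j}(z_{2,i}-z_{2,j})^2$. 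The remaining factor $\prod(z_{1,i}-z_{3,k})$ is forced by the hypothesis $\phi_{\underline{d}'}(F)=0$ for $\underline{d}'<\underline{d}_k$ via $\phi_{\underline{d}_{k-1}}=\rho_k\circ\phi_{\underline{d}_k}$, exactly as in Remark \ref{remark}. Thus $\phi_{\underline{d}_k}(F)=Z\cdot W$ with $W$ a Laurent polynomial that is skew-symmetric in $z_1$, skew-symmetric in $z_3$ and symmetric in $z_2$; equivalently $W\in A_{d_1}\otimes S_{d_2}\otimes A_{d_3}$.

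The only remaining constraint is on the $z_2$-block, and this is where the wheel condition enters. Imposing the Hall--Littlewood wheel condition $z_{2,j}=(v^2)^{j-1}z_{2,1}$ for $1\leqslant j\leqslant t$ corresponds, under $\phi_{\underline{d}_k}$, to the specialization $x_{d_1+j}=v^{2(j-1)}z_{2,1}$, $y_j=-v^{2j-1}z_{2,1}$; note that $\phi_{\underline{d}_k}$ already builds in $y_j=-vx_{d_1+j}$, and a direct computation of the ratios $x/y$ and $y/x$ shows the resulting chain is exactly \eqref{wheelunity} (the closing relation using $v^{2t}=1$). Hence by skew-symmetry of $f$ and the wheel condition \eqref{wheelunity}, the numerator $f$, and therefore $\phi_{\underline{d}_k}(F)$, vanishes on this locus; since $Z$ does not vanish there — its $z_2$-Vandermonde $\prod_{i<j}(v^{2(i-1)}-v^{2(j-1)})^2$ is nonzero because $v^2$ has order exactly $t$ — the factor $W$ satisfies the FJMMT wheel condition in the variables $z_2$. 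As $v^2$ is a primitive $t$-th root of unity, Theorem \ref{boris} forces the $z_2$-dependence of $W$ into $S'_{d_2}$, so that $W\in A_{d_1}\cdot A_{d_3}\cdot S'_{d_2}$; by \eqref{speresult} this is precisely the span of $\{\phi_{\underline{d}_k}(F_h):\deg h=\underline{d}_k,\ h\in H^a\}$, proving the claim. When $d_2<t$ the $z_2$ wheel condition is vacuous and $S'_{d_2}=S_{d_2}$, so the conclusion is automatic, and the pure block $\underline{d}=(0,t,0)$ is Proposition \ref{toy}.

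The main obstacle is precisely this last translation: one must check that the single cyclic condition \eqref{wheelunity} degenerates, under each specialization $\phi_{\underline{d}_k}$, to exactly the FJMMT wheel condition of \cite{FJMMT03} on the Hall--Littlewood block, and that the auxiliary factor $Z$ is both divisible into $\phi_{\underline{d}_k}(F)$ and nonvanishing on the wheel locus, so that the vanishing transfers from $\phi_{\underline{d}_k}(F)$ to $W$. Once this is in place, Theorem \ref{boris} supplies the genuinely nontrivial arithmetic input, and the rest is the formal descent through the filtration described in the first paragraph.
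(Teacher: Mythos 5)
Your argument is correct, but it takes a genuinely different route from the paper's. You rerun the specialization filtration of Proposition \ref{spe21} and Remark \ref{remark} at the root of unity: on each step you factor $\phi_{\underline{d}_k}(F)=Z\cdot W$ with $W\in A_{d_1}\otimes S_{d_2}\otimes A_{d_3}$, observe that the cyclic condition \eqref{wheelunity} degenerates under $\phi_{\underline{d}_k}$ (using $y_j=-vx_{d_1+j}$ and $v^{2t}=1$) to the FJMMT wheel condition for the parameter $v^{2}$, a primitive $t$-th root of unity, and then invoke Theorem \ref{boris} to place the $z_2$-block of $W$ in the span of admissible Hall--Littlewood polynomials, which by \eqref{speresult} is exactly the span of the $\phi_{\underline{d}_k}(F_h)$ with $h\in H^a$; the subtraction procedure then finishes the descent. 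The paper bypasses the specialization machinery entirely: it writes the numerator as $\prod(x_i-x_j)\prod(y_k-y_l)\cdot g$ with $g$ doubly symmetric, characterizes the wheel condition as membership of $g$ in the explicit ideal generated by $\chi_{k+1},\dots,\chi_{t+k-1}$, $\psi_{l+1},\dots,\psi_{t+l-1}$ and $\chi_{t+k}\psi_{k}+(-1)^{t}\chi_{l}\psi_{t+l}$, realizes these generators by explicit shuffle products (with Proposition \ref{toy} as the base case $(t,t)$), and inducts on the number of extra variables. The trade-off is clear: your proof uses \cite{FJMMT03} as a black box but is more systematic, reusing the filtration that already drives the generic-$v$ argument; the paper's proof is self-contained and, as the remark following the theorem notes, yields a new proof of Theorem \ref{boris} as a byproduct --- a bonus your route necessarily forgoes. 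One point worth making explicit in your write-up: the operative form of \eqref{wheelunity} is the vanishing of the numerator $f$ on the wheel locus (for odd $t$ some denominator factors $x_i-y_j$ vanish there), which is the same reading the paper adopts when it transfers the wheel condition from $F$ to the symmetric factor $g$.
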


\begin{proof}
We will focus on the symmetric factor. For $k,l\geqslant 0$, let $F\in \overline{\Lambda}_{t+k,t+l}$, then the corresponding symmetric factor $g$ satisfies the wheel condition if and only if $g$ belongs to the ideal generated by $\{\chi_{k+1},\dots,\chi_{t+k-1},\psi_{l+1},\dots,\psi_{t+l-1},\chi_{t+k}\psi_{k}+(-1)^{t}\chi_{l}\psi_{t+l}\}$, here $\chi_{0}=\psi_{0}=1$. Now it is easy to check that $\chi_{t+i}$ is generated by shuffle product of $x$ and $\chi_{t+i-1}$, $\psi_{t+j}$ is generated by shuffle product of $\psi_{t+j-1}$ and $y$. By Proposition \ref{toy} and induction on $k,l$, we get $\Lambda^{\zeta}=\Lambda^{w}$.
\end{proof}

\begin{Rk}\rm
Our method of proving Theorem \ref{wheelcondition} actually gives a new proof of Theorem \ref{boris}.
\end{Rk}

\section{shuffle realization of $U_{v}^{>}(\widehat{\mathfrak{D}}(2,1;\theta))$}\label{Ud}

\subsection{Drinfeld realization and a spanning set} The exceptional Lie superalgebras $\mathfrak{D}(2,1;\theta)$ with $\theta\in\mathbb{C}$ and $\theta\neq 0,-1$ form a one-parameter family of superalgebras of rank $3$ and dimension $17$. There are four different simple root systems and corresponding Dynkin diagrams, in this subsection we choose the completely fermionic one. Namely the simple roots are $\{\alpha_{1},\alpha_{2},\alpha_{3}\}$ with parities $p(\alpha_{i})=1$ for $i=1,2,3$ and Cartan matrix $A=(a_{ij})_{1\leqslant i,j\leqslant 3}$ where
$$A=\begin{pmatrix}
0&1&\theta\\
1&0&-\theta-1\\
\theta&-\theta-1&0
\end{pmatrix}.$$

The positive roots are $\Psi^{+}=\{\alpha_{1}\prec\alpha_{1}+\alpha_{3}\prec\alpha_{1}+\alpha_{2}\prec\alpha_{1}+\alpha_{2}+\alpha_{3}\prec\alpha_{2}\prec\alpha_{2}+\alpha_{3}\prec\alpha_{3}\}$ with a fixed ordering. We also fix an ordering on $\Psi^{+}\times\mathbb{Z}$ as follows:
\begin{equation}
(\beta_{1},k_{1})\prec(\beta_{2},k_{2})\Leftrightarrow \beta_{1}\prec\beta_{2}\text{ or }\beta_{1}=\beta_{2},k_{1}\leqslant k_{2}.
\end{equation}
 The odd positive roots are $\Psi_{1}^{+}=\{\alpha_{1},\alpha_{2},\alpha_{3},\alpha_{123}=\alpha_{1}+\alpha_{2}+\alpha_{3}\}$, the even positive roots are $\Psi_{0}^{+}=\{\alpha_{12}=\alpha_{1}+\alpha_{2},\alpha_{23}=\alpha_{2}+\alpha_{3},\alpha_{13}=\alpha_{1}+\alpha_{3}\}$. The quantum affine superalgebra $U_{v}(\widehat{\mathfrak{D}}(2,1;\theta))$ has been studied in \cite{HSTY08}, where the Drinfeld realization is obtained. We consider its positive part $\dto$. Following \cite[4.1]{HSTY08}, fix $\hbar\in\mathbb{C}-\mathbb{Z}\pi\sqrt{-1}$, for any $u\in\mathbb{C}$ let 
 \begin{equation}
 v^{u}\coloneqq\text{exp}(u\hbar)=\sum_{n=0}^{\infty}\frac{(u\hbar)^{n}}{n!},\ \ \ v\coloneqq v^{1}.
 \end{equation}
 We also assume that $v$ is generic, that is $v^{ku}\neq 1$ for all $u\in\{1,\theta,\theta+1\}$ and $k\in\mathbb{N}$. $\dto$ is the $\mathbb{C}$-superalgebra with generators $\{e_{i,k}\}_{1\leqslant i\leqslant 3}^{k\in\mathbb{Z}}$, in which the parities are $p(e_{i,k})=1$ for any $i=1,2,3$ and $k\in\mathbb{Z}$, and  the following relations
\begin{equation}
\begin{aligned}
&[e_{i,k},e_{i,l}]=0,\ \ \ k,l\in \mathbb{Z},1\leqslant i\leqslant 3\\
&e_{i,k+1}e_{j,l}+v^{a_{ij}}e_{j,l}e_{i,k+1}=v^{a_{ij}}e_{i,k}e_{j,l+1}+e_{j,l+1}e_{i,k},\ \ \ a_{ij}\neq 0,k,l\in\mathbb{Z}\\
&[\theta]_{v}[[e_{1,r},e_{2,k}]_{v^{-1}},e_{3,l}]_{v}=[[e_{1,r},e_{3,l}]_{v^{-\theta}},e_{2,k}]_{v^{\theta}},\ \ \ r,k,l\in\mathbb{Z}
\label{defrel}
\end{aligned}
\end{equation}
where $[u]_{v}:=\frac{v^{u}-v^{-u}}{v-v^{-1}}$ for $u\in \mathbb{C}$.

We define the quantum affine root vectors by $E_{\alpha_{i}}(k)=e_{i,k}$, $E_{\alpha_{ij}}(k)=[e_{i,k},e_{j,0}]_{v^{-a_{ij}}}$, $E_{\alpha_{123}}(k)=[[e_{1,k},e_{2,0}]_{v^{-1}},e_{3,0}]_{v}$ for any $k\in\mathbb{Z}$. Let $H$ be the set of functions $h\colon \Psi^{+}\times \mathbb{Z}\rightarrow \mathbb{N}$ with finite support and such that $h(\beta,k)\leqslant 1$ if $\beta\in\Psi_{1}^{+}$. For each $h\in H$ we have the ordered monomial $E_{h}:=\prod_{(\beta,k)\in \Psi^{+}\times\mathbb{Z}}E_{\beta}(k)^{h(\beta,k)}$. Let $U'\subset\dto$ be the spanning set of these $E_{h}$ over $\mathbb{C}$.

\begin{Thm}
The set of ordered monomials $E_{h}$ is a spanning set for $\dto$.
\end{Thm}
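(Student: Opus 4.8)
The plan is to follow the strategy of Theorem~\ref{pbw2}: first establish a complete set of commutation relations among the seven families of quantum affine root vectors $E_{\beta}(k)$, $\beta\in\Psi^{+}$ (the analogue of Proposition~\ref{commutation}), and then run an induction on word length in the generators $\{e_{i,k}\}$ to straighten every word into the span $U'$ of ordered monomials. The point is that the chosen order $\alpha_{1}\prec\alpha_{13}\prec\alpha_{12}\prec\alpha_{123}\prec\alpha_{2}\prec\alpha_{23}\prec\alpha_{3}$ is \emph{convex}: whenever $\beta=\beta'+\beta''$ with $\beta'\prec\beta''$ one has $\beta'\prec\beta\prec\beta''$. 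This convexity is what makes the straightening well founded, since commuting two root vectors out of order will only ever produce root vectors lying strictly between them in the order.

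First I would record, for each ordered pair $(\beta,\beta')$ with $\beta\succ\beta'$, a straightening identity of the shape
\[
E_{\beta}(k)\,E_{\beta'}(l)-(-1)^{p(\beta)p(\beta')}v^{-(\beta,\beta')}E_{\beta'}(l)\,E_{\beta}(k)\in U',
\]
together with the self-relations: for an odd root $E_{\beta}(k)E_{\beta}(l)+E_{\beta}(l)E_{\beta}(k)\in U'$ (forcing $h(\beta,\cdot)\leqslant 1$), and for an even root $[E_{\beta}(k),E_{\beta}(l)]_{v^{(\beta,\beta)}}\in U'$, where $(\alpha_{12},\alpha_{12})=2$, $(\alpha_{13},\alpha_{13})=2\theta$ and $(\alpha_{23},\alpha_{23})=-2\theta-2$. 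These would be derived exactly as in the $\QAP$ computation: the second defining relation in \eqref{defrel} and its consequences play the role of Lemma~\ref{comm}, the super-Jacobi identities of Lemma~\ref{bra} reorganize the nested brackets, and the cubic relation in \eqref{defrel} is invoked whenever all three simple generators interact. By convexity, the correction terms are ordered monomials supported on roots strictly between $\beta'$ and $\beta$, which is precisely the triangularity needed below.

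Granting these relations, the spanning statement follows by the induction of Theorem~\ref{pbw2}. Given a word $w=e_{i_{0},k_{0}}\cdot w'$ with $w'$ already reduced to an ordered monomial $E_{h}$, I would move $e_{i_{0},k_{0}}=E_{\alpha_{i_{0}}}(k_{0})$ rightward into its slot: each transposition past a larger root vector is resolved by the corresponding straightening identity, replacing the product either by the reordered product (advancing the factor toward its correct block) or by ordered monomials carrying a composite root vector that absorbs the defect. Since every correction term is supported on strictly intermediate roots and has no greater total height, the process terminates and expresses $w$, hence every element of $\dto$, as a finite combination of the $E_{h}$.

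The main obstacle is the derivation of the commutation relations themselves, and the genuinely new difficulties — absent in a single $\QAP$ block — are the self-relations of the composite roots and everything touching the longest root $\alpha_{123}$. As for $[r_{k},r_{s}]_{v^{2}}$ in \eqref{commrel}, I expect each even self-relation to follow from a telescoping recursion reducing a general index pair to an adjacent one, closed off by a vanishing base identity such as $[E_{\alpha_{12}}(k),E_{\alpha_{12}}(k-1)]_{v^{2}}=0$ and its $\alpha_{13},\alpha_{23}$ counterparts, obtained by unfolding the definitions through Lemma~\ref{bra} and collapsing them with the cubic relation in \eqref{defrel}. The cross-relations among $\alpha_{12},\alpha_{13},\alpha_{23}$, and all relations producing or involving $E_{\alpha_{123}}(k)$, are where the three $\mathfrak{sl}(2|1)$ subalgebras genuinely interact and where the Serre-type relation must be used repeatedly; verifying that these brackets land inside the span of ordered monomials, rather than generating new out-of-order composites, is the crux of the argument.
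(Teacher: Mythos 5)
Your proposal is correct and follows essentially the same route as the paper: establish that all brackets $[E_{\beta}(k),E_{\beta'}(l)]_{v^{-(\beta,\beta')}}$ lie in the span $U'$ of ordered monomials (the analogue of Proposition~\ref{commutation}, derived via Lemma~\ref{bra} and the relations \eqref{defrel}), then straighten arbitrary words by the length induction of Theorem~\ref{pbw2}. The only slight overstatement is that correction terms need not be supported on roots strictly between $\beta'$ and $\beta$ (e.g.\ the paper's expansion of $[E_{\alpha_{12}}(r),E_{\alpha_{3}}(k)]_{v}$ produces terms like $E_{\alpha_{1}}(l)E_{\alpha_{23}}(l')$), but since they are in any case already ordered monomials of the same length, the induction terminates exactly as you describe.
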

 
\begin{proof}
Same to the proof of Proposition \ref{commutation}, by repeatedly using Lemma \ref{bra}, we get the commutation relations between these quantum affine root vectors. Specifically, for any $\beta,\beta'\in \Psi^{+}$, $k,l\in\mathbb{Z}$, we have $[E_{\beta}(k),E_{\beta'}(l)]_{v^{-\beta\cdot\beta'}}\in U'$. For $\beta=\alpha_{i},\beta'=\alpha_{j}$ or $\beta=\alpha_{i},\beta'=\alpha_{ij}$ or $\beta=\beta'=\alpha_{ij}$, it is the same as Proposition \ref{commutation}. For $\beta=\alpha_{ij},\beta'=\alpha_{k}$, $k\neq i,j$, we have $[[e_{1,r},e_{2,0}]_{v^{-1}},e_{3,k}]_{v}\approx[[e_{1,r},e_{3,k}]_{v^{-\theta}},e_{2,0}]_{v^{\theta}}\approx E_{\alpha_{123}}(r+k)+\sum E_{\alpha_{12}}(l)E_{\alpha_{3}}(l')+\sum E_{\alpha_{1}}(l)E_{\alpha_{23}}(l')\in U'$. For $\beta=\alpha_{ijk},\beta'=\alpha_{i}$, we have $[[[e_{1,r},e_{2,0}]_{v^{-1}},e_{3,0}]_{v},e_{2,k}]_{v^{\theta}}\approx [[E_{\alpha_{13}(r-k)},e_{2,k}]_{v^{\theta}},e_{2,k}]_{v^{\theta}}+\sum [[e_{1,l}e_{2,l'},e_{3,0}]_{v},e_{2,k}]_{v^{\theta}}\in U'$. The remaining cases are similar.

Now using these commutation relations, any product of quantum affine root vectors $E_{\beta}(k)E_{\beta'}(l)$ can be written as a linear combination of $E_{h}$ of the same length. Same to the proof of Theorem \ref{pbw2}, by induction on the length of elements, we show any element of $\dto$ is a finite linear combination of $E_{h}$. We omit the details.
\end{proof}

\subsection{Shuffle algebra $\Omega$}
Consider $\Omega=\bigoplus_{\underline{k}=(k_{1},k_{2},k_{3})\in \mathbb{N}^{3}}\Omega_{\underline{k}}$, where $\Omega_{\underline{k}}$ consists of rational functions $F$ in the variables $\{x_{i,r}\}_{1\leqslant i\leqslant 3}^{1\leqslant r\leqslant k_{i}}$ which satisfies:
\begin{enumerate}[leftmargin=*]
\item $F$ is skew-symmetric with respect to $\{x_{i,r}\}_{1\leqslant r\leqslant k_{i}}$ for any $1\leqslant i\leqslant 3$.
\item $F=\frac{f}{\prod_{1\leqslant i<j\leqslant 3,1\leqslant r\leqslant k_{i},1\leqslant s\leqslant k_{j}}(x_{i,r}-x_{j,s})}$, where $f\in \mathbb{C}[x_{i,r}^{\pm 1}]_{1\leqslant i\leqslant 3}^{1\leqslant r\leqslant k_{i}}$ is a Laurent polynomial.
\item $F$ satisfies the wheel condition, that is $F(\{x_{i,r}\}_{1\leqslant i\leqslant 3}^{1\leqslant r\leqslant k_{i}})=0$ once $x_{1,r}=v^{-1}x_{2,s}=v^{\theta}x_{3,w}$ or $x_{1,r}=vx_{2,s}=v^{-\theta}x_{3,w}$ for some $1\leqslant r\leqslant k_{1},1\leqslant s\leqslant k_{2},1\leqslant w\leqslant k_{3}$. 
 \end{enumerate}
 
We also fix an $3\times 3$ matrix of rational functions $(\omega_{i,j}(z))_{1\leqslant i,j\leqslant 3}\in\text{Mat}_{3\times 3}(\mathbb{C}(z))$ by setting
\begin{equation}
\begin{aligned}
&\omega_{i,j}(z)=-\omega_{j,i}(z)=\frac{z-v^{-a_{ij}}}{z-1},\ \ 1\leqslant i<j\leqslant 3\\
&\omega_{i,i}(z)=1,\ \ 1\leqslant i\leqslant 3.
\end{aligned}
\end{equation}
 
Denote by $\mathfrak{S}_{\underline{k}}=\mathfrak{S}_{k_{1}}\times\mathfrak{S}_{k_{2}}\times\mathfrak{S}_{k_{3}}$. For any $F\in \Omega_{\underline{k}},G\in \Omega_{\underline{l}}$, define their shuffle product $F\star G\in \Omega_{\underline{k}+\underline{l}}$ by $F\star G=$
\begin{equation}
\begin{aligned}
\text{ASym}_{\mathfrak{S}_{\underline{k}+\underline{l}}}\Big(F(\{x_{i,r}\}_{1\leqslant i\leqslant 3}^{1\leqslant r\leqslant k_{i}})\cdot G(\{x_{j,s}\}_{1\leqslant j\leqslant 3}^{k_{j}<s\leqslant k_{j}+l_{j}})\prod_{1\leqslant i,j\leqslant 3}\prod_{r\leqslant k_{i}}^{s>k_{j}}\omega_{i,j}(\frac{x_{i,r}}{x_{j,s}})\Big).\label{shuffleproduct}
\end{aligned}
\end{equation}
We know $\Omega$ is $\star$-closed, and $\Omega$ becomes an associative $\mathbb{C}$-algebra under $\star$. 

For an ordered monomial $E_h$, define its degree
$\deg(E_h)=\deg(h)=\underline{d}\in \mathbb{N}^{7}$ as a collection of
$d_{\beta}:=\sum_{r\in \mathbb{Z}} h(\beta,r)\in \mathbb{N}\ (\beta\in \Psi^+)$ ordered
with respect to the ordering on $\Psi^{+}$. We consider the lexicographical ordering on $\mathbb{N}^{7}$:
\begin{equation*}
  \{d_\beta\}_{\beta\in \Psi^+}>\{d'_\beta\}_{\beta\in \Psi^+}
  \ \mathrm{iff\ there\ is}\ \gamma\in \Psi^+\
  \mathrm{such\ that}\ d_\gamma>d'_\gamma\ \mathrm{and}\
  d_\beta=d'_\beta\ \mathrm{for\ all}\ \beta\prec\gamma.
\end{equation*}
Identifying simple roots as a basis for $\mathbb{N}^{3}$, for any $\underline{d}\in \mathbb{N}^{7}$ we define its grading $\text{gr}(\underline{d})=\sum_{\beta\in\Psi^{+}}d_{\beta}\beta\in\mathbb{N}^{3}$. Let us now define for any degree $\unl{d}$ a specialization map
\begin{equation}
\phi_{\unl{d}}\colon \Omega_{\text{gr}(\unl{d})}\rightarrow \mathbb{C}[\{w_{\beta,s}^{\pm 1}\}_{\beta\in\Psi^{+}}^{1\leqslant s\leqslant d_{\beta}}].
\end{equation}
Denote $\text{gr}(\unl{d})=\unl{k}$. For $1\leqslant i\leqslant 3$, we split the variables $\{x_{i,r}\}_{1\leqslant r\leqslant k_{i}}$ into groups $\{x^{\beta}_{i,s}\}_{\beta\in\Psi^{+}}^{1\leqslant s\leqslant d_{\beta}}$ corresponding to each $\beta\in\Psi^{+}$ and $1\leqslant s\leqslant d_{\beta}$. Now For any $F\in\Omega_{\unl{k}}$, let $f$ be the numerator part of $F$, define $\phi_{\underline{d}}(F)$ as the corresponding Laurent polynomial by specializing the variables in $f$ as follows:
\begin{equation}
\begin{aligned}
&x^{\beta}_{1,s}\mapsto w_{\beta,s},\ x^{\beta}_{2,s}\mapsto vw_{\beta,s},\ \beta\in\Psi^{+},\\
&x^{\beta}_{3,s}\mapsto v^{\theta}w_{\beta,s},\ \beta=\alpha_{13},\alpha_{123};\ x^{\beta}_{3,s}\mapsto v^{-\theta}w_{\beta,s},\ \beta=\alpha_{23},\alpha_{3}.
\end{aligned}
\end{equation}
Since $F\in \Omega_{\unl{k}}$ is skew-symmetric with respect to $\mathfrak{S}_{\unl{k}}$, different choices of our splitting of the variables only occur different signs in the specialization $\phi_{\unl{d}}(F)$ and we can ignore them.

\begin{Thm}\label{d21}
$e_{i,k}\mapsto x_{i}^{k}$ induces a $\mathbb{C}$-algebra isomorphism $\varphi\colon\dto\xrightarrow{\sim}\Omega$.
\end{Thm}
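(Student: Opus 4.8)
The plan is to transfer the four-step strategy used for $\QAP$ in Subsection \ref{isosl21} to the present three-color setting: establish that $\varphi$ is a well-defined homomorphism, prove surjectivity, prove linear independence of $\{\varphi(E_{h})\}_{h\in H}$, and then assemble these into the isomorphism. First I would check that $e_{i,k}\mapsto x_{i}^{k}$ respects the defining relations \eqref{defrel}. The relations $[e_{i,k},e_{i,l}]=0$ become the skew-symmetry within each color (recall $\omega_{i,i}=1$), and the quadratic relations with $a_{ij}\neq 0$ follow from the explicit shape $\omega_{i,j}(z)=\frac{z-v^{-a_{ij}}}{z-1}$ exactly as in the verification of the last relation in \eqref{eq:relation21}. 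The cubic relation $[\theta]_{v}[[e_{1,r},e_{2,k}]_{v^{-1}},e_{3,l}]_{v}=[[e_{1,r},e_{3,l}]_{v^{-\theta}},e_{2,k}]_{v^{\theta}}$ is the essential new point: both sides map into $\Omega_{(1,1,1)}$, and I would verify the resulting rational-function identity by a direct computation analogous to the quadratic check, the wheel conditions of condition (3) in the definition of $\Omega$ being the shuffle-algebra counterpart that replaces this Serre relation on the $\Omega$ side.

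For surjectivity I would show that $\Omega$ is generated by $\{x_{i}^{k}\}$, following the inductive peeling argument of Remark \ref{remark} rather than the direct computation of Proposition \ref{ideal}, since the former is what the newly defined $\phi_{\underline{d}}$ is built to support. Concretely, I order the finitely many degrees $\underline{d}$ of a fixed grading $\text{gr}(\underline{d})=\underline{k}$ lexicographically, construct intermediate specialization maps relating consecutive degrees so that the lower $\phi_{\underline{d}}$ factor through the higher ones and the top map is injective, and then, given $F\in\Omega_{\underline{k}}$ vanishing under all strictly lower specializations, use the wheel condition together with skew-symmetry to force $\phi_{\underline{d}}(F)$ to carry precisely the vanishing factors appearing in the leading images $\phi_{\underline{d}}(\varphi(E_{h}))$. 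Subtracting off a suitable combination of $\varphi(E_{h})$ and descending the resulting filtration, one reaches the zero top-kernel and concludes $F$ lies in the span of the $\varphi(E_{h})$.

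The core of the argument, and the step I expect to be the main obstacle, is the linear independence of $\{\varphi(E_{h})\}$, for which the tailored specialization $\phi_{\underline{d}}$ was introduced precisely because the naive type-$A$ specialization behaves badly here. Mirroring Proposition \ref{spe21}, I would compute $\phi_{\underline{d}}(\varphi(E_{h}))$ for $h$ with $\deg(h)=\underline{d}$ and show it factors as a common nonzero factor $Z$ times skew-symmetric Laurent polynomials in the $w_{\beta,s}$ for the odd roots $\beta\in\Psi_{1}^{+}$ (whose multiplicities are $\leqslant 1$) and symmetric Hall-Littlewood-type polynomials for the even roots $\beta\in\Psi_{0}^{+}$, in the spirit of \eqref{speresult}; crucially I would also check the triangularity $\phi_{\underline{d}'}(\varphi(E_{h}))=0$ for $\underline{d}'<\deg(h)$ in the lexicographic order, so that the leading specializations of the $\varphi(E_{h})$ of a fixed degree form a basis of the corresponding product of skew-symmetric and symmetric spaces. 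The delicate and genuinely new points are the asymmetric substitutions $x^{\beta}_{3,s}\mapsto v^{\pm\theta}w_{\beta,s}$ dictated by the structure of $\beta$, the presence of the triple root $\alpha_{123}$, and the requirement that $\phi_{\underline{d}}$ be simultaneously compatible with all three two-color wheel factors; verifying that these choices make $Z$ genuinely common and nonzero across all $h$ of a given degree is where the real work concentrates.

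Finally I would assemble the pieces. By the preceding spanning theorem $\{E_{h}\}_{h\in H}$ spans $\dto$, so surjectivity of $\varphi$ shows $\{\varphi(E_{h})\}$ spans $\Omega$, and combined with their linear independence this makes $\{\varphi(E_{h})\}$ a basis of $\Omega$; pulling back, $\{E_{h}\}$ is a basis of $\dto$ and $\varphi$ is injective. Hence $\varphi$ is a bijective $\mathbb{C}$-algebra homomorphism, that is, an isomorphism, and as a byproduct $\{E_{h}\}_{h\in H}$ is a PBW-type basis of $\dto$.
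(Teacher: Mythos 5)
Your proposal follows essentially the same route as the paper: verify the defining relations (including the cubic Serre-type one) by direct computation, compute $\phi_{\underline{d}}(\varphi(E_{h}))$ as a common factor times skew-symmetric/Hall--Littlewood pieces and establish the triangularity $\phi_{\underline{d}'}(\varphi(E_{h}))=0$ for $\underline{d}'<\deg(h)$ to get linear independence and hence injectivity via the spanning theorem, and prove surjectivity by the peeling argument of Remark \ref{remark}, extracting the required vanishing factors from skew-symmetry, the wheel conditions, and the vanishing under lower specializations in a case-by-case analysis of pairs of positive roots. This is exactly the structure of the paper's proof, so the proposal is correct and not a genuinely different argument.
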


\begin{proof}
It is straightforward to check that the assignment $e_{i,k}\mapsto x_{i}^{k}$ induces an algebra morphism $\varphi$ from $\dto$ to $\Omega$. For example let us verify the last defining relation in \eqref{defrel}. Under $\varphi$, the left side of this equation is equal to
\begin{equation}
\begin{aligned}
&[\theta]_{v}[[x_{1,1}^{r},x_{2,1}^{k}]_{v^{-1}},x_{3,1}^{l}]_{v}\\&=[\theta]_{v}[x_{1,1}^{r}\star x_{2,1}^{k}+v^{-1}x_{2,1}^{k}\star x_{1,1}^{r},x_{3,1}^{l}]_{v}\\
&=\frac{[\theta]_{v}(1-v^{-2})}{x_{1,1}-x_{2,1}}[x_{1,1}^{r+1}x_{2,1}^{k},x_{3,1}^{l}]_{v}\\
&=\frac{[\theta]_{v}(1-v^{-2})}{x_{1,1}-x_{2,1}}(x_{1,1}^{r+1}x_{2,1}^{k}\star x_{3,1}^{l}-vx_{3,1}^{l}\star x_{1,1}^{r+1}x_{2,1}^{k})\\
&=Z\cdot[(x_{1,1}-v^{-\theta}x_{3,1})(x_{2,1}-v^{1+\theta}x_{3,1})-v(x_{3,1}-v^{-\theta}x_{1,1})(x_{3,1}-v^{1+\theta}x_{2,1})]\\
&=Z\cdot[(1-v^{2})x_{1,1}x_{2,1}+(v^{\theta+2}-v^{-\theta})x_{2,1}x_{3,1}+(v^{-\theta+1}-v^{\theta+1})x_{1,1}x_{3,1})],
\end{aligned}
\end{equation}
where $Z=\frac{[\theta]_{v}(1-v^{-2})x_{1,1}^{r+1}x_{2,1}^{k}x_{3,1}^{l}}{(x_{1,1}-x_{2,1})(x_{2,1}-x_{3,1})(x_{1,1}-x_{3,1})}$. On the right side similar computations give the same result.
In particular, we have $\varphi(E_{\alpha_{ij}}(k))=\frac{(1-v^{-2a_{ij}})x_{i,1}^{k+1}}{x_{i,1}-x_{j,1}}$ for $1\leqslant i<j\leqslant 3$ and $\varphi(E_{\alpha_{123}}(k))=$
$$\frac{(1-v^{-2})x_{1,1}^{k+1}[(1-v^{2})x_{1,1}x_{2,1}+(v^{\theta+2}-v^{-\theta})x_{2,1}x_{3,1}+(v^{-\theta+1}-v^{\theta+1})x_{1,1}x_{3,1}]}{(x_{1,1}-x_{2,1})(x_{2,1}-x_{3,1})(x_{1,1}-x_{3,1})}.$$

For $\deg(h)=\underline{d}$ we have $\phi_{\underline{d}}(\varphi(E_{h}))=c\cdot\prod_{\beta\prec\beta'}G_{\beta,\beta'}\prod_{\beta\in\Psi^{+}}G_{\beta}$ where $c$ is some non-zero constant and 
\begin{equation}
G_{\beta}=\left\{ \begin{aligned}
&w_{\beta,1}^{r_{\beta,1}}\star\cdots\star w_{\beta,d_{\beta}}^{r_{\beta,d_{\beta}}},\ \ \beta=\alpha_{1},\alpha_{2},\alpha_{3},\\
&\prod_{1\leqslant s<r\leqslant d_{\beta}}(w_{\beta,s}-w_{\beta,r})^{2}w_{\beta,1}^{r_{\beta,1}}\star\cdots\star w_{\beta,d_{\beta}}^{r_{\beta,d_{\beta}}},\ \ \beta=\alpha_{12},\alpha_{13},\alpha_{23},\\
&\prod_{1\leqslant s\neq r\leqslant d_{\beta}}(w_{\beta,s}-w_{\beta,r})(w_{\beta,s}-v^{-2}w_{\beta,r})(w_{\beta,s}-v^{-2\theta}w_{\beta,r})\\
&\cdot w_{\beta,1}^{r_{\beta,1}+2}\star\cdots\star w_{\beta,d_{\beta}}^{r_{\beta,d_{\beta}}+2},\ \ \beta=\alpha_{123},
\end{aligned}
\right.
\end{equation}
\begin{equation}
G_{\beta\prec\beta'}=\prod_{1\leqslant s\leqslant d_{\beta}}^{1\leqslant r\leqslant d_{\beta'}}\prod_{i\in[\beta],j\in[\beta']}y^{\beta,\beta'}_{i,j}(w_{\beta,s},w_{\beta',r}),
\end{equation}
where $\{r_{\beta,1},\dots,r_{\beta,d_{\beta}}\}$ is the support of $h$ restricted on $\beta$ and the shuffle element $w_{\beta,1}^{r_{\beta,1}}\star\cdots\star w_{\beta,d_{\beta}}^{r_{\beta,d_{\beta}}}$ is defined as monomial basis of skew-symmetric Laurent polynomials for odd root $\beta$ or as Hall-Littlewood basis of symmetric Laurent polynomials for even root $\beta$. And the function $y^{\beta,\beta'}_{i,j}(a,b)$ is defined as $y^{\beta,\beta'}_{1,2}=a-b,y^{\beta,\beta'}_{2,1}=a-v^{-2}b$ for any $\beta\prec\beta'$; $y^{
\beta,\beta'}_{1,3}=a-b,y^{\beta,\beta'}_{2,3}=a-v^{2\theta}b$ if $\beta'=\alpha_{13},\alpha_{123}$; $y^{
\beta,\beta'}_{1,3}=a-v^{-2\theta}b,y^{\beta,\beta'}_{2,3}=a-b$ if $\beta'=\alpha_{3},\alpha_{23}$; $y^{\beta,\beta'}_{3,1}=a-v^{-2\theta}b,y^{\beta,\beta'}_{3,2}=a-v^{2}b$ if $\beta=\alpha_{13},\alpha_{123}$; $y^{\beta,\beta'}_{i,i}=1$ for any $\beta\prec\beta'$ and $1\leqslant i\leqslant 3$. Same to \cite{Tsy18} we have to prove that $\phi_{\underline{d}'}(\varphi(E_{h}))=0$ for any $\underline{d}'<\deg(h)$. Recall that each term of $\varphi(E_{h})$ is corresponding to some permutation $\sigma\times\tau\times\mu$, and we will prove that each term is zero under specialization $\phi_{\underline{d}'}$.  Let $\text{deg}(h)=(d_{\beta}),\text{deg}(h')=(d'_{\beta})$, then there are the following cases.
\begin{itemize}[leftmargin=*]
\item $d'_{\alpha_{1}}<d_{\alpha_{1}}$, then if for some $1\leqslant i\leqslant d_{\alpha_{1}}$ we have $\sigma(i)>d_{\alpha_{1}}$, then this term is zero under $\phi_{\underline{d}'}$ because the $x_{1,\sigma(i)}$ will be mapping to some $w_{\beta,s}$ and some $x_{2,r}$(or $x_{3,r}$) will be mapping to $vw_{\beta,s}$(or $v^{\theta}w_{\beta,s}$), and the term contains the factor $x_{1,\sigma(i)}-v^{-1}x_{2,r}$(or $x_{1,\sigma(i)}-v^{-\theta}x_{3,r}$); if for any $1\leqslant i\leqslant d_{\alpha_{1}}$ we have $1\leqslant \sigma(i)\leqslant d_{\alpha_{1}}$, then since $d'_{\alpha_{1}}<d_{\alpha_{1}}$ under  $\phi_{\underline{d}'}$ there will be some $x_{1,i}$ for $1\leqslant i\leqslant d_{\alpha_{1}}$ and $x_{2,r}$(or $x_{3,r}$) that is mapping to some $w_{\beta,s}$ and $vw_{\beta,s}$(or $v^{\theta}w_{\beta,s}$), and for any such $\sigma$ the term contains the factor $x_{1,i}-v^{-1}x_{2,r}$(or $x_{1,i}-v^{-\theta}x_{3,r}$).

\item $d'_{\alpha_{1}}=d_{\alpha_{1}},d'_{\alpha_{13}}<d_{\alpha_{13}}$, then if there is some $1\leqslant i\leqslant d_{\alpha_{1}}$ such that $\sigma(1)>d_{\alpha_{1}}$, same for the arguments in the last case, the term is zero under $\phi_{\underline{d}'}$; otherwize for any $d_{\alpha_{1}}+1
\leqslant i\leqslant d_{\alpha_{1}}+d_{\alpha_{13}}$ we have $\sigma(i)>d_{\alpha_{1}}$, then there will be some $x_{1,s}$ and $x_{2,r}$ corresponding to each term that will be mapping to some $w_{\beta,t}$ and $vw_{\beta,t}$ and the term contains the factor $x_{1,s}-v^{-1}x_{2,r}$.

\item $d'_{\alpha_{1}}=d_{\alpha_{1}},d'_{\alpha_{13}}=d_{\alpha_{13}},d'_{\alpha_{12}}<d_{\alpha_{12}}$, then if for some $d_{\alpha_{1}}+d_{\alpha_{13}}+1\leqslant i\leqslant d_{\alpha_{1}}+d_{\alpha_{13}}+d_{\alpha_{12}}$ we have $\sigma(i)\leqslant d_{\alpha_{1}}+d_{\alpha_{13}}$, same for the arguments in the above cases we have the term is zero under $\phi_{\underline{d}'}$; otherwise there will be some $x_{1,s}$ and $x_{3,r}$ corresponding to each term that will be mapping to some $w_{\beta,t}$ and $v^{\theta}w_{\beta,t}$ and the term contains the factor $x_{1,s}-v^{-\theta}x_{3,r}$.

\item $d'_{\alpha_{2}}<d_{\alpha_{2}}$ and $d'_{\beta}=d_{\beta}$ for any $\beta\prec\alpha_{2}$, then if there is some $\beta=\alpha_{1},\alpha_{13},\alpha_{12},\alpha_{123}$ and such that $\sigma(x^{\beta}_{1,s})=x^{\beta'}_{1,r}$ for some $\beta'\neq\beta$, same for arguments in the above cases the term is zero under $\phi_{\underline{d}'}$; otherwise there will be some $x_{2,s}$ and $x_{3,r}$ corresponding to each term that will be mapping to some $vw_{\alpha_{23},t}$ and $v^{-\theta}w_{\alpha_{23},t}$ and the term contains the factor $x_{2,s}-v^{\theta+1}x_{3,r}$.
\end{itemize}
Hence we get $\{E_{h}\}_{h\in H}$ are PBW type bases for $\dto$ and $\varphi$ is injective.

For surjectivity of $\varphi$, by Remark \ref{remark} we only need to prove that given $h\in H$ such that $\text{gr}(h)=\underline{k}$ and $\text{deg}(h)=(d_{\beta})_{\beta\in \Psi^{+}}$, if for any $\text{gr}(h')=\text{gr}(h)$ and $\text{deg}(h')<\text{deg}(h)$ we have $\phi_{\underline{d}'}(F)=0$, then $\phi_{\underline{d}}(F)$ is a linear combination of some $\phi_{\underline{d}}(\varphi(E_{h}))$ for any $F\in \Omega_{\underline{k}}$. Actually, we only need to consider the case where there are only two positive roots $\beta\prec\beta'$ such that $d_{\beta},d_{\beta'}\neq 0$, and this can be done by case by case study. We give details of proof for some cases, other cases are similar.
\begin{itemize}[leftmargin=*]
\item For cases such as $(\beta,\beta')=(\alpha_{i},\alpha_{j}),(\alpha_{i},\alpha_{ij}),(\alpha_{ij},\alpha_{j})$, where $1\leqslant i<j\leqslant 3$, it is the same as Remark \ref{remark}.

\item For $(\beta,\beta')=(\alpha_{1},\alpha_{23}),(\alpha_{13},\alpha_{2}),(\alpha_{12},\alpha_{3})$, we consider the case $(\beta,\beta')=(\alpha_{13},\alpha_{2})$. We have $\phi_{\underline{d}}(\varphi(E_{h}))=\prod_{1\leqslant s<r\leqslant d_{\beta}}(w_{\beta,s}-w_{\beta,r})^{2}\prod_{1\leqslant s<r\leqslant d_{\beta'}}(w_{\beta',s}-w_{\beta',r})\prod_{1\leqslant s,\leqslant d_{\beta}}^{1\leqslant r\leqslant d_{\beta'}}(w_{\beta,s}-w_{\beta',r})(w_{\beta,s}-v^{2}w_{\beta',r})\cdot f$, in which $f\in \mathbb{C}[w_{\beta,s},w_{\beta',r}]^{\mathfrak{S}_{d_{\beta}}\times\mathfrak{S}_{d_{\beta'}}}_{1\leqslant s\leqslant d_{\beta},1\leqslant r\leqslant d_{\beta'}}$. Now for any $F\in \Omega_{(d_{\beta},d_{\beta'},d_{\beta})}$, $F$ is skew-symmetric, hence $\phi_{\underline{d}}(F)$ has the factor $\prod_{1\leqslant s<r\leqslant d_{\beta}}(w_{\beta,s}-w_{\beta,r})^{2}\prod_{1\leqslant s<r\leqslant d_{\beta'}}(w_{\beta',s}-w_{\beta',r})$. Under specialization $\phi_{\underline{d}}$ the wheel condition becomes $\phi_{\underline{d}}(F)=0$ once $w_{\beta,s}=v^{2}w_{\beta',r}$, hence giving us the factor $\prod_{1\leqslant s\leqslant d_{\beta}}^{1\leqslant r\leqslant d_{\beta'}}(w_{\beta,s}-v^{2}w_{\beta',r})$. Finally, let $\text{deg}(h')=(d_{\beta}-1,d_{\alpha_{123}}=1,d_{\beta'}-1)$, then $\text{deg}(h')<\text{deg}(h)$, hence $\phi_{\underline{d}'}(F)=0$, and gives us the last factor $\prod_{1\leqslant s\leqslant d_{\beta}}^{1\leqslant r\leqslant d_{\beta'}}(w_{\beta,s}-w_{\beta',r})$. 

\item For $(\beta,\beta')=(\alpha_{13},\alpha_{12}),(\alpha_{12},\alpha_{23}),(\alpha_{13},\alpha_{23})$, We consider the case $\beta=\alpha_{12},\beta'=\alpha_{23}$. We have $\phi_{\underline{d}}(\varphi(E_{h}))=\prod_{1\leqslant s<r\leqslant d_{\beta}}(w_{\beta,s}-w_{\beta,r})^{2}\prod_{1\leqslant s<r\leqslant d_{\beta'}}(w_{\beta',s}-w_{\beta',r})^{2}\prod_{1\leqslant s,\leqslant d_{\beta}}^{1\leqslant r\leqslant d_{\beta'}}(w_{\beta,s}-w_{\beta',r})^{2}(w_{\beta,s}-v^{-2\theta}w_{\beta',r})\cdot f$. The skew-symmetrization gives the factor $\prod_{1\leqslant s<r\leqslant d_{\beta}}(w_{\beta,s}-w_{\beta,r})^{2}\prod_{1\leqslant s<r\leqslant d_{\beta'}}(w_{\beta',s}-w_{\beta',r})^{2}\prod_{1\leqslant s\leqslant d_{\beta}}^{1\leqslant r\leqslant d_{\beta'}}(w_{\beta,s}-w_{\beta',r})$. Under specialization $\phi_{\underline{d}}$ the wheel condition becomes $\phi_{\underline{d}}(F)=0$ once $w_{\beta,s}=w_{\beta',r}$, hence giving us the factor $\prod_{1\leqslant s\leqslant d_{\beta}}^{1\leqslant r\leqslant d_{\beta'}}(w_{\beta,s}-v^{2\theta}w_{\beta',r})$. Finally, let $\text{deg}(h')=(d_{\beta}-1,d_{\alpha_{123}}=1,d_{\alpha_{2}}=1,d_{\alpha_{23}}-1)$, then $\text{deg}(h')<\text{deg}(h)$, hence $\phi_{\underline{d}'}(F)=0$, and gives us the last factor $\prod_{1\leqslant s\leqslant d_{\beta}}^{1\leqslant r\leqslant d_{\beta'}}(w_{\beta,s}-v^{-2\theta}w_{\beta',r})$. 

\item For $(\beta,\beta')=(\alpha_{1},\alpha_{123}),(\alpha_{123},\alpha_{2}),(\alpha_{123},\alpha_{3})$, we consider the case $\beta=\alpha_{1},\beta'=\alpha_{123}$. We have $\phi_{\underline{d}}(\varphi(E_{h}))=\prod_{1\leqslant s<r\leqslant d_{\beta}}(w_{\beta,s}-w_{\beta,r})\prod_{1\leqslant s<r\leqslant d_{\beta'}}(w_{\beta',s}-w_{\beta',r})^{3}\prod_{s\neq r}(w_{\beta',s}-v^{-2}w_{\beta',r})(w_{\beta',s}-v^{-2\theta}w_{\beta',r})\prod_{1\leqslant s,\leqslant d_{\beta}}^{1\leqslant r\leqslant d_{\beta'}}(w_{\beta,s}-w_{\beta',r})^{2}\cdot f$. The skew-symmetrization gives the factor $\prod_{1\leqslant s<r\leqslant d_{\beta}}(w_{\beta,s}-w_{\beta,r})\prod_{1\leqslant s<r\leqslant d_{\beta'}}(w_{\beta',s}-w_{\beta',r})^{3}\prod_{1\leqslant s,\leqslant d_{\beta}}^{1\leqslant r\leqslant d_{\beta'}}(w_{\beta,s}-w_{\beta',r})$. The wheel condition becomes $\phi_{\underline{d}}(F)=0$ once $w_{\beta',s}=v^{-2}w_{\beta',r}$ or $w_{\beta',s}=v^{-2\theta}w_{\beta',r}$. Let $\text{deg}(h')=(d_{\beta}-1,d_{\alpha_{12}}=1,d_{\alpha_{13}}=1,d_{\alpha_{123}}-1)$, then $\phi_{\underline{d}'}(F)=0$ gives the factor $\prod_{1\leqslant s,\leqslant d_{\beta}}^{1\leqslant r\leqslant d_{\beta'}}(w_{\beta,s}-w_{\beta',r})$.

\item For $(\beta,\beta')=(\alpha_{12},\alpha_{123}),(\alpha_{13},\alpha_{123}),(\alpha_{123},\alpha_{23})$, we consider the case $\beta=\alpha_{12},\beta'=\alpha_{123}$. We have $\phi_{\underline{d}}(\varphi(E_{h}))=\prod_{1\leqslant s<r\leqslant d_{\beta}}(w_{\beta,s}-w_{\beta,r})^{2}\prod_{1\leqslant s<r\leqslant d_{\beta'}}(w_{\beta',s}-w_{\beta',r})^{3}\prod_{1\leqslant s\neq r\leqslant d_{\beta'}}(w_{\beta',s}-v^{-2}w_{\beta',r})(w_{\beta',s}-v^{-2\theta}w_{\beta',r})\prod_{1\leqslant s,\leqslant d_{\beta}}^{1\leqslant r\leqslant d_{\beta'}}(w_{\beta,s}-w_{\beta',r})^{2}(w_{\beta,s}-v^{-2}w_{\beta',r})(w_{\beta,s}-v^{2\theta}w_{\beta,r})\cdot f$. The wheel condition becomes $\phi_{\underline{d}}(F)=0$ once $w_{\beta',s}=v^{-2}w_{\beta',r}$ or $w_{\beta',s}=v^{-2\theta}w_{\beta',r}$ or $w_{\beta,s}=v^{-2}w_{\beta',r}$ or $w_{\beta,s}=v^{2\theta}w_{
\beta',r}$, hence giving us the factor $\prod_{1\leqslant s\neq r\leqslant d_{\beta'}}(w_{\beta',s}-v^{-2}w_{\beta',r})(w_{\beta',s}-v^{-2\theta}w_{\beta',r})\prod_{1\leqslant s,\leqslant d_{\beta}}^{1\leqslant r\leqslant d_{\beta'}}(w_{\beta,s}-v^{-2}w_{\beta',r})(w_{\beta,s}-v^{2\theta}w_{\beta,r})$. The remaining factors come from the skew-symmetrization.
\end{itemize}

This completes our proof.
\end{proof}

\subsection{Generalization to all Dynkin diagrams associated to $\mathfrak{D}(2,1;\theta)$}\footnote{The results in this subsection have been previously worked out by Tsymbaliuk (private communication).} In this subsection, we give shuffle algebra realization of quantum affine algebras corresponding to all Dynkin diagrams associated to $\mathfrak{D}(2,1;\theta)$, making the picture for this exceptional Lie superalgebra complete.

Besides the simple root system with complete fermionic roots, there are three other simple root systems associated to $\mathfrak{D}(2,1;\theta)$, which all contains one fermionic root and two bosonic roots. The only difference in these three cases is the position of fermionic root, hence we only need to consider the case corresponding to the following Cartan matrix
$$A=\begin{pmatrix}
2&-1&0\\
-1&0&-\theta\\
0&-1&2
\end{pmatrix},$$
where $\theta\neq 0,-1$. We denote the corresponding Lie superalgebra by $\mathfrak{D}_{2}(2,1;\theta)$. Let $d_{1}=d_{2}=1,d_{3}=\theta$, so that $(d_{i}a_{ij})_{1\leqslant i,j\leqslant 3}$ is symmetric. The positive roots are $\Psi^{+}=\{\alpha_{1}\prec\alpha_{1}+\alpha_{2}\prec\alpha_{1}+\alpha_{2}+\alpha_{3}\prec\alpha_{1}+2\alpha_{2}+\alpha_{3}\prec\alpha_{2}\prec\alpha_{2}+\alpha_{3}\prec\alpha_{3}\}$ with a fixed ordering. We denote the highest positive root by $\gamma$ and denote the other positive roots by $\alpha_{ij}$ as before. Still we assume that $v\in\mathbb{C}$ is generic, that is $v^{ku}\neq 1$ for all $u\in\{1,\theta,\theta+1\}$ and $k\in\mathbb{N}$. The positive part of quantum affine superalgebra $U_{v}^{>}(\widehat{\mathfrak{D}}_{2}(2,1;\theta))$ is the $\mathbb{C}$-superalgebra with generators $\{e_{i,k}\}_{1\leqslant i\leqslant 3}^{k\in\mathbb{Z}}$, in which the parities are $p(e_{i,k})=\overline{i-1}$ for any $k\in\mathbb{N}$, and  the following relations:
\begin{equation}
\begin{aligned}
&[e_{i,k},e_{j,l}]=0,\ \ \ a_{ij}=0,k,l\in\mathbb{Z}\\
&[e_{i,k},e_{j,l+1}]_{v^{-d_{i}a_{ij}}}=-[e_{j,l},e_{i,k+1}]_{v^{-d_{j}a_{ji}}},\ \ \ a_{ij}\neq 0,k,l\in\mathbb{Z}\\
&\text{Sym}_{k,l}[e_{i,k},[e_{i,l},e_{2,s}]_{v^{-d_{i}a_{i2}}}]_{v^{-d_{i}a_{i2}-2d_{i}}}=0,\ \ \ i=1,3,k,l,s\in\mathbb{Z}
\end{aligned}
\end{equation}
The quantum affine root vectors $E_{\beta}(k)$ and the ordered monomials $E_{h}$ are also defined similarly as before. Especially, we have $E_{\gamma}(k)=[E_{\alpha_{13}}(k),E_{\alpha_{2}}(0)]_{v^{1+\theta}}$. Standard arguments show that these ordered monomials span the whole positive part. Note that the difference between this case and the case for type $A(2|2)$ with distinguished simple root system is that there is no commutation relations between quantum affine root vectors $E_{\alpha_{13}}$ and $E_{\alpha_{2}}$, and there is one more quantum affine root vector $E_{\gamma}$ in the ordered monomials $E_{h}$.

Consider $\Omega'=\bigoplus_{\underline{k}=(k_{1},k_{2},k_{3})\in \mathbb{N}^{3}}\Omega'_{\underline{k}}$, where $\Omega'_{\underline{k}}$ consists of rational functions $F$ in the variables $\{x_{i,r}\}_{1\leqslant i\leqslant 3}^{1\leqslant r\leqslant k_{i}}$ which satisfies:
\begin{enumerate}[leftmargin=*]
\item $F$ is symmetric with respect to $\{x_{i,r}\}_{1\leqslant r\leqslant k_{i}}$ for $i=1,3$ and skew-symmetric with respect to $\{x_{2,r}\}_{1\leqslant r\leqslant k_{2}}$.
\item $F=\frac{f}{\prod_{1\leqslant i\leqslant 2,1\leqslant r\leqslant k_{i},1\leqslant s\leqslant k_{i+1}}(x_{i,r}-x_{i+1,s})}$, where $f\in \mathbb{C}[x_{i,r}^{\pm 1}]_{1\leqslant i\leqslant 3}^{1\leqslant r\leqslant k_{i}}$ is a Laurent polynomial.
\item $F$ satisfies the wheel condition, that is $F(\{x_{i,r}\}_{1\leqslant i\leqslant 3}^{1\leqslant r\leqslant k_{i}})=0$ once $x_{1,r_{1}}=v^{2}x_{1,r_{2}}=vx_{2,s}$ or $x_{3,t_{1}}=v^{2\theta}x_{3,t_{2}}=v^{\theta}x_{2,s}$ for some $1\leqslant r_{1},r_{2}\leqslant k_{1},1\leqslant s\leqslant k_{2},1\leqslant t_{1},t_{2}\leqslant k_{3}$. 
 \end{enumerate}
Let $\omega_{ij}(z)=\frac{z-v^{-d_{i}a_{ij}}}{z-1}$, then $\Omega'$ becomes an associative algebra under the shuffle product similar to \eqref{shuffleproduct} except that we take symmetrization instead of skew-symmetrization with respect to $\{x_{1,r}\}$ and $\{x_{3,s}\}$. Now we have
\begin{Thm}\label{d212}
$e_{i,k}\mapsto x_{i}^{k}$ induces a $\mathbb{C}$-algebra isomorphism $\varphi\colon U_{v}^{>}(\widehat{\mathfrak{D}}_{2}(2,1;\theta))\xrightarrow{\sim}\Omega'$.
\end{Thm}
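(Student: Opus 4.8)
The plan is to mirror the proof of Theorem~\ref{d21}, adapting each ingredient to the mixed symmetric/skew-symmetric setting of $\Omega'$. First I would verify that $e_{i,k}\mapsto x_i^k$ respects all the defining relations of $U_{v}^{>}(\widehat{\mathfrak{D}}_{2}(2,1;\theta))$: the orthogonality relation for $a_{ij}=0$, the exchange relation for $a_{ij}\neq 0$, and the two quantum Serre relations at the bosonic nodes $i=1,3$. The first two are direct shuffle computations identical in spirit to the verification carried out for the last relation in Theorem~\ref{d21}; the Serre relations reduce, after the appropriate (skew-)symmetrization, to a polynomial identity in at most three variables, which I would check by the same degree-and-residue bookkeeping used for the propagators $\omega_{ij}$. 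Along the way I would record the explicit images $\varphi(E_{\alpha_{ij}}(k))$, and in particular $\varphi(E_\gamma(k))$ with $\gamma=\alpha_1+2\alpha_2+\alpha_3$, since $E_\gamma(k)=[E_{\alpha_{13}}(k),E_{\alpha_2}(0)]_{v^{1+\theta}}$ will govern the leading-term analysis.

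For injectivity I would construct a specialization map $\phi_{\underline{d}}\colon \Omega'_{\mathrm{gr}(\underline d)}\to \mathbb{C}[\{w_{\beta,s}^{\pm1}\}]$ exactly as in Theorem~\ref{d21}, splitting the variables $\{x_{i,r}\}$ into root-groups and scaling each group by powers of $v$ chosen compatibly with the two wheel conditions defining $\Omega'$. The key structural novelty is that $\gamma$ carries the multiplicity $2\alpha_2$, so a single $w_{\gamma,s}$ must absorb one $x_1$-variable, \emph{two} $x_2$-variables and one $x_3$-variable; I would therefore scale the two $\alpha_2$-slots of a $\gamma$-group by distinct powers of $v$ dictated by $x_{1,r_1}=v^2x_{1,r_2}=vx_{2,s}$ and its $\alpha_3$-counterpart, so that $\phi_{\underline d}(\varphi(E_h))$ factors into Vandermonde-type symbols at the fermionic node $\alpha_2$ and Hall--Littlewood symbols at the bosonic nodes $\alpha_1,\alpha_3$, together with composite factors at the higher roots. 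The triangularity statement $\phi_{\underline d'}(\varphi(E_h))=0$ for $\underline d'<\deg(h)$ then follows from the same case-by-case inspection of permutations $\sigma\times\tau\times\mu$: each surviving term is forced to contain a vanishing factor of the form $x_{i,r}-v^{\bullet}x_{j,s}$ coming from $\omega_{i,j}$. Linear independence of the leading symbols yields the PBW property of $\{E_h\}$ and hence the injectivity of $\varphi$.

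For surjectivity I would run the inductive argument of Remark~\ref{remark}. Given $F\in\Omega'_{\underline k}$, ordering degrees lexicographically, I would show by downward induction that $\phi_{\underline d}(F)$ lies in the span of the leading symbols $\phi_{\underline d}(\varphi(E_h))$: the hypothesis $\phi_{\underline d'}(F)=0$ for all $\underline d'<\underline d$ forces the missing ``cross'' factors $\prod(w_{\beta,s}-v^{\bullet}w_{\beta',r})$, the (skew-)symmetry of $F$ supplies the Vandermonde and Hall--Littlewood factors, and the wheel conditions supply the remaining $v$-shifted factors; subtracting the appropriate combination of $\varphi(E_h)$ and iterating places $F$ in the image. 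As in Theorem~\ref{d21}, it suffices to treat the pairs $(\beta,\beta')$ of roots with $d_\beta,d_{\beta'}\neq 0$, now including all pairs involving $\gamma$.

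The main obstacle is precisely the root $\gamma=\alpha_1+2\alpha_2+\alpha_3$ together with the absence of any commutation relation between $E_{\alpha_{13}}$ and $E_{\alpha_2}$, which is why $E_\gamma$ must be kept as a separate generator in the ordered monomials. Because $\gamma$ doubles the fermionic variable, designing the specialization so that it is \emph{simultaneously} injective on leading terms and compatible with both $\mathfrak{sl}_2$-type wheel conditions at the bosonic nodes is the delicate point, and the factor-chasing for the pairs $(\alpha_{13},\alpha_2)$, $(\alpha_{12},\gamma)$, $(\gamma,\alpha_{23})$ and $(\beta,\gamma)$ demands the most careful bookkeeping, since it is exactly there that the extra root vector $E_\gamma$ interacts with the doubled $\alpha_2$-slots.
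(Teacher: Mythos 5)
Your proposal follows essentially the same route as the paper: check the assignment on the defining relations, build a specialization map $\phi_{\underline d}$ whose $\gamma$-groups absorb one $x_1$-, two $x_2$- and one $x_3$-variable with the two $\alpha_2$-slots scaled by \emph{distinct} powers of $v$ (the paper uses $v^{-1}w_{\beta,s}$ and $v^{-1-2\theta}w_{\beta,s}$), establish triangularity $\phi_{\underline d'}(\varphi(E_h))=0$ for $\underline d'<\deg(h)$ plus linear independence of the leading symbols for injectivity, and run the Remark~\ref{remark} induction over pairs of positive roots for surjectivity. You correctly isolate the genuine novelty -- the root $\gamma=\alpha_1+2\alpha_2+\alpha_3$ and the absent commutation between $E_{\alpha_{13}}$ and $E_{\alpha_2}$ -- so this matches the paper's argument in both structure and emphasis.
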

\begin{proof}
The only difficulty is that we need to define the specialization map corresponding to $\Omega'$. Now for any $E_{h}$, we label the variables in $\varphi(E_{h})$ by $\{x_{i,s}^{\beta}\}_{i\in[\beta],1\leqslant s\leqslant d_{\beta}}$ for $\beta\neq\gamma$ and by $\{x^{\beta}_{1,s},x^{\beta}_{2,1,s},x^{\beta}_{2,2,s},x^{\beta}_{3,s}\}_{1\leqslant s\leqslant d_{\beta}}$ for $\beta=\gamma$. Now define the specialization $\phi_{\underline{d}}(\varphi(E_{h}))\in\mathbb{C}[w_{\beta,s}^{\pm 1}]$ by specializing:
\begin{equation}
\begin{aligned}
&x^{\beta}_{1,s}\mapsto w_{\beta,s},\ x^{\beta}_{2,s}\mapsto v^{-1}w_{\beta,s},\ x^{\beta}_{3,s}\mapsto v^{-1-\theta}w_{\beta,s},\ \ \beta\neq\gamma\\
&x^{\gamma}_{1,s}\mapsto w_{\beta,s},\ x^{\gamma}_{2,1,s}\mapsto v^{-1}w_{\beta,s},\ x^{\gamma}_{2,2,s}\mapsto v^{-1-2\theta}w_{\beta,s},\ x^{\gamma}_{3,s}\mapsto v^{-1-\theta}w_{\beta,s}.
\end{aligned}
\end{equation}
Explicitly we have $\phi_{\underline{d}}(\varphi(E_{h}))=c\cdot\prod_{\beta\prec\beta'}G_{\beta,\beta'}\prod_{\beta\in\Psi^{+}}G_{\beta}\prod_{\beta\in\Psi^+}w_{\beta,1}^{r_{\beta,1}}\star\cdots\star w_{\beta,d_{\beta}}^{r_{\beta,d_{\beta}}}$ where $c$ is some non-zero constant and we have
\begin{itemize}[leftmargin=*]
\item $G_{\beta}=1,\beta=\alpha_{1},\alpha_{2},\alpha_{3}.$
\item $G_{\beta}=\prod_{1\leqslant s\neq r\leqslant d_{\beta}}(w_{\beta,s}-v^{2}w_{\beta,r}),\beta=\alpha_{12}.$
\item $G_{\beta}=\prod_{1\leqslant s\neq r\leqslant d_{\beta}}(w_{\beta,s}-v^{2\theta}w_{\beta,r}),\beta=\alpha_{23}.$
\item $G_{\beta}=\prod_{1\leqslant s\neq r\leqslant d_{\beta}}(w_{\beta,s}-v^{-2}w_{\beta,r})(w_{\beta,s}-v^{2\theta}w_{\beta,r}), \beta=\alpha_{13}.$
\item $G_{\beta}=\prod_{1\leqslant s\neq r\leqslant d_{\beta}}(w_{\beta,s}-w_{\beta,r})(w_{\beta,s}-v^{-2}w_{\beta,r})(w_{\beta,s}-v^{2\theta}w_{\beta,r}),\beta=\gamma.$
\item $G_{\beta,\beta'}=\prod_{1\leqslant s\leqslant d_{\beta}}^{1\leqslant r\leqslant d_{\beta'}}(w_{\beta,s}-w_{\beta',r}), (\beta,\beta')=(\alpha_{1},\alpha_{2}),(\alpha_{2},\alpha_{3}),(\alpha_{1},\alpha_{23}),(\alpha_{12},\alpha_{2}),(\alpha_{12},\alpha_{3})$,\\$(\alpha_{2},\alpha_{23}).$
\item $G_{\beta,\beta'}=\prod_{1\leqslant s\leqslant d_{\beta}}^{1\leqslant r\leqslant d_{\beta'}}(w_{\beta,s}-v^{-2}w_{\beta',r}),(\beta,\beta')=(\alpha_{1},\alpha_{12}),(\alpha_{1},\alpha_{13}).$
\item $G_{\beta,\beta'}=\prod_{1\leqslant s\leqslant d_{\beta}}^{1\leqslant r\leqslant d_{\beta'}}(w_{\beta,s}-v^{-2\theta}w_{\beta',r}),(\beta,\beta')=(\alpha_{13},\alpha_{3}),(\alpha_{23},\alpha_{3}).$
\item $G_{\beta,\beta'}=\prod_{1\leqslant s\leqslant d_{\beta}}^{1\leqslant r\leqslant d_{\beta'}}(w_{\beta,s}-v^{-2}w_{\beta',r})(w_{\beta,s}-v^{-2\theta}w_{\beta',r}),(\beta,\beta')=(\alpha_{1},\gamma).$
\item $G_{\beta,\beta'}=\prod_{1\leqslant s\leqslant d_{\beta}}^{1\leqslant r\leqslant d_{\beta'}}(w_{\beta,s}-w_{\beta',r})(w_{\beta,s}-v^{-2}w_{\beta',r})(w_{\beta,s}-v^{2}w_{\beta',r}),(\beta,\beta')=(\alpha_{12},\alpha_{13}).$
\item $G_{\beta,\beta'}=G_{\alpha_{12},\alpha_{13}}\cdot\prod_{1\leqslant s\leqslant d_{\beta}}^{1\leqslant r\leqslant d_{\beta'}}(w_{\beta,s}-v^{-2\theta}w_{\beta',r}),(\beta,\beta')=(\alpha_{12},\gamma).$
\item $G_{\beta,\beta'}=\prod_{1\leqslant s\leqslant d_{\beta}}^{1\leqslant r\leqslant d_{\beta'}}(w_{\beta,s}-w_{\beta',r})^{2},(\beta,\beta')=(\alpha_{12},\alpha_{23}).$
\item $G_{\beta,\beta'}=G_{\alpha_{12},\gamma}\cdot\prod_{1\leqslant s\leqslant d_{\beta}}^{1\leqslant r\leqslant d_{\beta'}}(w_{\beta,s}-v^{-2\theta}w_{\beta',r})(w_{\beta,s}-v^{2\theta}w_{\beta',r}),(\beta,\beta')=(\alpha_{13},\gamma).$
\item $G_{\beta,\beta'}=\prod_{1\leqslant s\leqslant d_{\beta}}^{1\leqslant r\leqslant d_{\beta'}}(w_{\beta,s}-w_{\beta',r})(w_{\beta,s}-v^{2\theta}w_{\beta',r}),(\beta,\beta')=(\alpha_{13},\alpha_{2}),(\gamma,\alpha_{2}).$
\item $G_{\beta,\beta'}=G_{\alpha_{13},\alpha_{2}}\cdot\prod_{1\leqslant s\leqslant d_{\beta}}^{1\leqslant r\leqslant d_{\beta'}}(w_{\beta,s}-v^{2\theta}w_{\beta',r}),(\beta,\beta')=(\alpha_{13},\alpha_{23}).$
\item $G_{\beta,\beta'}=G_{\gamma,\alpha_{2}}\cdot\prod_{1\leqslant s\leqslant d_{\beta}}^{1\leqslant r\leqslant d_{\beta'}}(w_{\beta,s}-v^{-2\theta}w_{\beta',r})(w_{\beta,s}-v^{2\theta}w_{\beta',r}),(\beta,\beta')=(\gamma,\alpha_{23}).$
\item $G_{\beta,\beta'}=\prod_{1\leqslant s\leqslant d_{\beta}}^{1\leqslant r\leqslant d_{\beta'}}(w_{\beta,s}-v^{-2\theta}w_{\beta',r})(w_{\beta,s}-v^{2\theta}w_{\beta',r}),(\beta,\beta')=(\gamma,\alpha_{3}).$
\end{itemize}
Now same to the proof of Theorem \ref{d21}, we have $\phi_{\unl{d}'}(\varphi(E_{h}))=0$ for any $\unl{d}'<\deg(h)$ and by looking at each pair of positive roots the wheel conditions give us the vanishing factors as above, thus completing our proof.
\end{proof}

\end{document}